\newcommand{\coev}{\stackrel{\longrightarrow}{\operatorname{coev}}}
\newcommand{\ev}{\stackrel{\longrightarrow}{\operatorname{ev}}}
\newcommand{\tev}{\stackrel{\longleftarrow}{\operatorname{ev}}}
\newcommand{\tcoev}{\stackrel{\longleftarrow}{\operatorname{coev}}}
\def \cat{\mathcal{C}}
\def \Z{\mathbb{Z}}
\def \Q{\mathbb{Q}}
\def \R{\mathbb{R}}
\def \C{\mathbb{C}}
\def \vert{\mathrm{Vert}}
\def \Spin{\mathrm{Spin}}
\def \so{{{\rm SO}(3)}}
\def \CN{\mathcal{N}}
\def \CT{\mathcal{T}}
\def \CL{\mathcal{L}}
\def \CC{\mathcal{C}}
\def \N{\mathrm{N}}
\def \WRT{\operatorname{WRT}}
\def \lk{{\ell k}}
\def \Tr{\operatorname{Tr}}
\def \Tor{\operatorname{Tor}}
\def \Hom{\operatorname{Hom}}
\def \Id{\mathrm{Id}}
\def \qdim{\operatorname{qdim}}
\def \End{\operatorname{End}}
\def \Go{\mathfrak{G}}
\def \Lo{\mathfrak{L}}
\def \VV{\mathbb{V}}
\def\CB{{\mathcal B}}
\def\CH{{\mathcal H}}
\def\CU{{\mathcal U}}
\renewcommand{\bar}{\overline}
\renewcommand{\hat}{\widehat}
\numberwithin{equation}{section}
\newtheorem{Theorem}{Theorem}[section]
\newtheorem{Lemma}[Theorem]{Lemma}
\newtheorem{Proposition}[Theorem]{Proposition}
\newtheorem{conj}[Theorem]{Conjecture}
\theoremstyle{definition}
\newtheorem{Definition}[Theorem]{Definition}
\newtheorem{Example}[Theorem]{Example}
\newtheorem{Remark}[Theorem]{Remark} }
\begin{document}
\allowdisplaybreaks

\newcommand{\arXivNumber}{2107.14238}

\renewcommand{\thefootnote}{}

\renewcommand{\PaperNumber}{010}

\FirstPageHeading

\ShortArticleName{Non-Semisimple TQFT's and BPS $q$-Series}

\ArticleName{Non-Semisimple TQFT's and BPS $\boldsymbol{q}$-Series\footnote{This paper is a~contribution to the Special Issue on Enumerative and Gauge-Theoretic Invariants in honor of Lothar G\"ottsche on the occasion of his 60th birthday. The~full collection is available at \href{https://www.emis.de/journals/SIGMA/Gottsche.html}{https://www.emis.de/journals/SIGMA/Gottsche.html}}}

\Author{Francesco COSTANTINO~$^{\rm a}$, Sergei GUKOV~$^{\rm b}$ and Pavel PUTROV~$^{\rm c}$}

\AuthorNameForHeading{F.~Costantino, S.~Gukov and P.~Putrov}

\Address{$^{\rm a)}$~Institut de Math\'ematiques de Toulouse, 118 route de Narbonne, F-31062 Toulouse, France}
\EmailD{\href{mailto:francesco.costantino@math.univ-toulouse.fr}{francesco.costantino@math.univ-toulouse.fr}}

\Address{$^{\rm b)}$~Walter Burke Institute for Theoretical Physics, California Institute of Technology,\\
\hphantom{$^{\rm b)}$}~Pasadena, CA 91125, USA}
\EmailD{\href{mailto:gukov@theory.caltech.edu}{gukov@theory.caltech.edu}}

\Address{$^{\rm c)}$~The Abdus Salam International Centre for Theoretical Physics, Strada Costiera 11,\\
\hphantom{$^{\rm c)}$}~Trieste 34151, Italy}
\EmailD{\href{mailto:putrov@ictp.it}{putrov@ictp.it}}

\ArticleDates{Received January 21, 2022, in final form February 10, 2023; Published online March 15, 2023}

\Abstract{We propose and in some cases prove a precise relation between 3-manifold invariants associated with quantum groups at roots of unity and at generic $q$. Both types of invariants are labeled by extra data which plays an important role in the proposed relation. Bridging the two sides~-- which until recently were developed independently, using very different methods~-- opens many new avenues. In one direction, it allows to study (and perhaps even to formulate) $q$-series invariants labeled by spin$^c$ structures in terms of non-semisimple invariants. In the opposite direction, it offers new insights and perspectives on various elements of non-semisimple TQFT's, bringing the latter into one unifying framework with other invariants of knots and 3-manifolds that recently found realization in quantum field theory and in string theory.}

\Keywords{3-manifold invariants; knot invariants; TQFT}

\Classification{57K16; 81T45}

{\small \tableofcontents}

\renewcommand{\thefootnote}{\arabic{footnote}}
\setcounter{footnote}{0}

\section{Introduction and summary}

As part of a larger quest for new quantum invariants of 3-manifolds, we wish to establish a~precise relation between the CGP invariants $\N_r (M, \omega)$ and the GPPV invariants \smash{$\hat Z_\mathfrak{s} (M;q)$}, introduced in~\cite{costantino2014quantum} and~\cite{Gukov:2017kmk, Gukov:2016gkn}, respectively. While the CGP invariants are defined for almost all closed connected 3-manifolds (suitably decorated, see later), the GPPV invariants are currently defined for a much smaller class, e.g., surgeries on closures of homogeneous braids with framings satisfying certain inequalities (see Sections~\ref{sec:GPPV} and~\ref{sec:Zhat-def-b1} for a detailed discussion and various alternative mathematical definitions). More explicitly there exist formulas for the GPPV invariants for some (but not all) $3$-manifolds but their invariance is proved in a smaller set of cases (e.g., surgeries over negative plumbing links).

Here we summarize some of the essential features of these invariants, compare side-by-side what they depend on, and present a motivation for why one should expect a connection between these two rather different sets of invariants. First, perhaps the most obvious part of input data that both sets of invariants need is a choice of 3-manifold $M$. In either case, however, it needs to be equipped with additional structure; in the case of CGP invariants $\N_r (M, \omega)$ it involves a~choice of $\omega \in H^1(M;\C/2\Z) \setminus H^1(M;\Z/2\Z)$, whereas in the case of GPPV invariants \smash{$\hat Z_\mathfrak{s} (M;q)$} it depends on $\mathfrak{s} \in \operatorname{Spin}^c (M) / \Z_2$. Although these structures are clearly different, they both are related\footnote{Recall that non-canonically $\operatorname{Spin}^c (M) \cong H_1 (M;\Z)$.} to $H_1 (M)$ and should be regarded as mutual counterparts in identifying the two sets of invariants, as we will see below.

Similarly, the dependence of $\N_r (M, \omega)$ on a positive integer $r\neq 0\bmod 4$ should be compared to the $q$-dependence of \smash{$\hat Z_\mathfrak{s} (M;q)$}.
Indeed, both sets of invariants are quantum group invariants of 3-manifolds, with
$\xi = {\rm e}^{\frac{\pi {\rm i}}{r}}$ and $q$ respectively playing the role of the quantum parameters. The definition of $\N_r (M, \omega)$ is based~\cite{costantino2014quantum} on the representation theory of the unrolled quantum group $ \CU_{\xi}^H (\mathfrak{sl}_2)$, whereas \smash{$\hat Z_\mathfrak{s} (M;q)$} should be thought of as a quantum group invariant associated with $\CU_q (\mathfrak{sl}_2)$ at generic $|q|<1$. Indeed, \smash{$\hat Z_\mathfrak{s} (M;q)$} basically gives a non-perturbative definition\footnote{At the perturbative level, in general it contains contributions of {\it all} complex ${\rm SL}(2,\C)$ flat connections on~$M$, much like, e.g., the Teichm\"uller TQFT~\cite{MR3204520} contains contributions of a particular component of the space of ${\rm SL}(2,\mathbb{R})$ flat connections. Note, as a result, the Teichm\"uller TQFT is strictly speaking not a non-perturbative completion of any perturbative Chern--Simons theory, and is much closer to the ``real Chern--Simons theory'' than to the ``complex Chern--Simons theory.''
} of ``${\rm SL}(2,\C)$ Chern--Simons theory'' that behaves well under surgery and has line operators labeled by Verma modules of arbitrary complex weight~\cite{Gukov:2003na,Park:2020edg}.
\begin{equation}\renewcommand{\arraystretch}{1.5}
\begin{tabular}{l|c|c}
\hline
 & $\N_r (M, \omega)$ & \smash{$\hat Z_\mathfrak{s} (M;q)$}
\\
\hline
Quantum parameter &$\xi = {\rm e}^{\frac{\pi {\rm i}}{r}}$ root of unity & generic $|q|<1$
\\
Quantum group & $\CU^H_\xi(\mathfrak{sl}_2)$ & $\CU_q (\mathfrak{sl}_2)$
\\
Additional structure & $\omega \in H^1(M;\C/2\Z) \setminus H^1(M;\Z/2\Z)$ & $\mathfrak{s} \in \operatorname{Spin}^c (M) / \Z_2$
\\
\hline
\end{tabular}
\label{NZhatcomparison}
\end{equation}

Since the relation between $\CU_q (\mathfrak{sl}_2)$ and \smash[t]{$\CU^H_\xi(\mathfrak{sl}_2)$} involves, among other things, specializing $q$ to be a root of unity, one might expect $\N_r (M, \omega)$ to be related to limiting values of \smash{\smash{$\hat Z_\mathfrak{s} (M;q)$}} at roots of unity. This relation between quantum groups was a large part of the motivation in~\cite{Gukov:2020lqm}, where a relation of this form was proposed for knot complements.
Indeed, the invariants~$\N_r$ are defined in the more general setup of triples $(M,\omega,L)$, where $L\subset M$ is a $\C$-colored framed oriented link and $\omega\in H^1(M\setminus L;\C/2 \mathbb{Z})$ is a cohomology class whose period on the meridians of $L$ is congruent (mod $2 \mathbb{Z}$) to the color of $L$. In this paper, we will only consider the case of links in $S^3$; in this case the cohomology class $\omega$ is uniquely determined by the coloring of the link therefore, instead of $\N_r\big(S^3,\omega,L\big)$, we shall use the abusive notation $\N_r(L_\alpha)$, with $\alpha$ the coloring of $L$.
Coming back to the conjectural relations between $\hat{Z}$ and $N_r$, we recall that in~\cite{Gukov:2019mnk} it was proposed that for the complement of a knot $K$ in $S^3$ the correct version of $\hat{Z}$ can be summarised by a single $2$ variable series $F_K (x,q) := \hat Z \big(S^3 \setminus K\big)$ (not depending on any further structure on $K$). This series is conjecturally related to the invariants $\operatorname{ADO}_r (x;K)$ introduced by Akutsu--Deguchi--Ohtsuki~\cite{MR1164114} as follows:

\begin{conj}\label{conj:park}\quad
\begin{enumerate}\itemsep=0pt
\item[$(a)$] There exists $W_K(x,q)\in \Z\big[q^{-1},q\big]\big[x^{\pm \frac{1}{2}}\big]$ such that the following holds for every $r\in \N$:
\begin{equation}
 W_K (x,q) |_{q \rightarrow \xi^2} =
\frac{\mathrm{ADO}_r \big(x/\xi^2;K\big)}{\Delta_K (x^{r})} \cdot \big( x^{1/2} - x^{-1/2} \big)
\label{FKADOnorm}
\end{equation}
$($Recall that for a knot $\Delta_K\big(x^2\big)=\nabla_K(x)\big(x-x^{-1}\big)$ where $\Delta_K$ is the Alexander polynomial and $\nabla_K$ the Alexander--Conway function.$)$

\item[$(b)$] When the series $F_K(x,q)$ is defined, it provides such\footnote{The equality~\eqref{FKADOnorm} does not by itself uniquely fix $W_K(x,q)$, as one in particular can always add to $W(x,q)$ a~multiple of $\prod_{n\geq 1}(1-q^n)$.} a $W_K(x,q)$.
 \end{enumerate}
\end{conj}

Here we address the problem of establishing a similar relation for more general 3-manifolds. This requires 3-manifold invariants which specialize to ADO invariants for knot complements, and CGP invariants $\N_r$ perfectly fit the bill:
\begin{equation}
\operatorname{ADO}_r \big(x^2/\xi^2;K\big) =
 \frac{x^r - x^{-r}}{x - x^{-1}} \N_r (K_{\alpha}), \qquad \text{where}\quad
 x = {\rm e}^{\frac{\pi {\rm i}\alpha}{r}},\quad \xi={\rm e}^{\frac{\pi {\rm i} }{r}}.
\label{ADOvsNK}
\end{equation}

The composition of~\eqref{FKADOnorm} and~\eqref{ADOvsNK} not only gives us the first instance of the sought-after relation between CGP and GPPV invariants in a certain class of 3-manifolds, but also provides a clue for the relation between parameters $r$ and $q$ for more general $M$:
\begin{equation*}
\N_r (M, \omega)
\, \stackrel{?}{\longleftrightarrow} \,
 \hat Z_\mathfrak{s} (M;q) \big|_{q \to \xi^2 = {\rm e}^{2\pi {\rm i}/r}}.
\end{equation*}
Another useful clue that follows from~\eqref{FKADOnorm} and~\eqref{ADOvsNK} is that the relation between $\N_r (M, \omega)$ and $\hat{Z}_\mathfrak{s} (M;q)$ should be linear,
\begin{equation}
\N_r (M,\omega)
=\bigg(\sum_{\mathfrak{s}} c^{\mathrm{CGP}}_{\omega,\mathfrak{s}} \hat Z_\mathfrak{s} (M;q)\bigg)\bigg|_{q \to {\rm e}^{\frac{2 \pi {\rm i}}{r}}}
\label{CGPZhatviac}
\end{equation}
much like analogous relations between $\hat Z (M;q)$ and other invariants of $M$, such as the inverse Turaev torsion~\cite{turaev1990euler,turaev1997torsion}, Witten--Reshetikhin--Turaev (WRT) invariants~\cite{reshetikhin1991invariants,Witten:1988hf}, and Rokhlin invariants.
Remark that in the above formula we are summing over all spin$^c$ structures although the $\hat{Z}_\mathfrak{s}(M;q)$ is invariant under the $\Z/2\Z$ involution on $\mathrm{Spin}^c(M)$, so although some terms could be grouped, this convention is often more convenient to work with.
\begin{figure}[ht]
	\centering
	\includegraphics[width=2.9in]{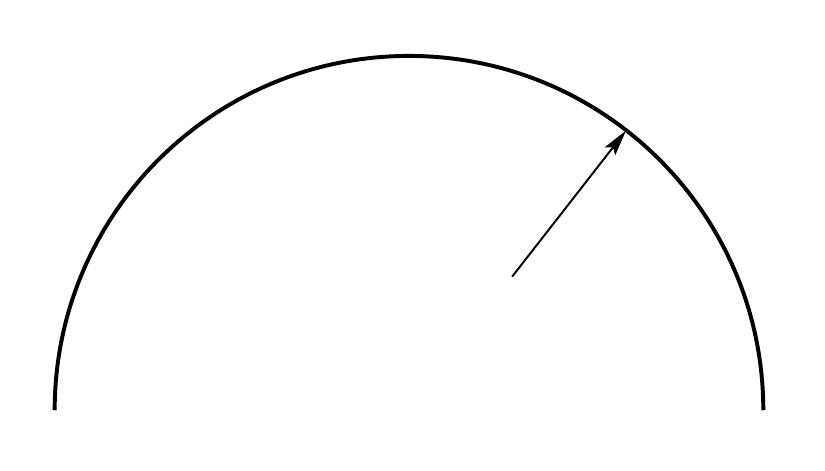}
	\caption{One of our main goals is to explore possible relations between $\N_r (M,\omega)$ and the limiting behavior of $\hat{Z}_{\mathfrak{s}} (M;q)$ at roots of unity.}
\end{figure}

Establishing the relation~\eqref{CGPZhatviac} -- and, in particular, determining the precise form of the coefficients $c^{\mathrm{CGP}}_{\omega,\mathfrak{s}}$ that relate additional structures on $M$ that enter the two sets of invariants -- is one of our main goals. We find\pagebreak

\begin{conj}\label{conj:cgpz}
Let $M$ be a rational homology sphere.
\begin{enumerate}\itemsep=0pt
\item[$(a)$] There exist a collection of series $W_\mathfrak{s}(M)\in q^{\Delta_\mathfrak{s}}\Q[[q]]\big[q^{-1}\big]$, $\mathfrak{s}\in \mathrm{Spin}^c(M)$ $($for some $\Delta_\mathfrak{s}\in \Q)$ such that the relation holds:%
\footnote{Note that $W_\mathfrak{s}(M)$, due to the overall factor $q^{\Delta_\mathfrak{s}}, \Delta_\mathfrak{s}\in \Q$, are multivalued functions of $q$ (the values on different branches differ by an overall phase). Throughout the paper, when writing the limit in the form ``$q\rightarrow {\rm e}^{{\rm i}\phi}$'' with $\phi\in [0,2\pi)$, it is assumed that the branch of $q^\delta$ for some $\delta\in
\Q$ is such that $q^{\delta} \rightarrow {\rm e}^{{\rm i}\phi \delta}$. Equivalently, one can consider the corresponding single-valued functions of $\tau$, with $q={\rm e}^{2\pi {\rm i}\tau}$. Then ``$q\rightarrow {\rm e}^{{\rm i}\phi}$'' should be understood as ``$\tau\rightarrow \phi/(2\pi)$'' (taken from the upper half plane).}
\begin{equation}
\N_r (M,\omega) = \bigg(\sum_{\mathfrak{s}\in \mathrm{Spin}^c(M)} c^{\mathrm{CGP}}_{\omega,\mathfrak{s}} W_\mathfrak{s} (M)\bigg) \bigg|_{q \to {\rm e}^{\frac{2 \pi {\rm i}}{r}}}
\label{CGPZhatviac3}
\end{equation}
with
\begin{equation}
\!c^{\mathrm{CGP}}_{\omega,\sigma(b,s)} \!=\! \frac{\mathcal{T}(M,[\omega])}{{|H_1(M;\Z)|}}
\!\begin{cases}
-{\rm e}^{-\frac{\pi {\rm i}}{2}\mu(M,s)}\!\sum_{a,f} {\rm e}^{2\pi {\rm i}\big(
 -\frac{r-1}{4}\lk(a,a) +\lk(a,f-b)-\frac{1}{2}\omega(a) +\lk(f,f)\big)}
\\[-1mm]
\hspace{60mm}\text{if}\quad r=1\bmod 4,
 \\[1mm]
\! \sqrt{|H_1(M;\Z)|} \sum_{a}
 {\rm e}^{-\frac{\pi {\rm i}r}{2} q_s(a)-2\pi {\rm i} \lk(a,b)-\pi {\rm i}\omega(a)},
\\[-1mm]
\hspace{60mm}\text{if}\quad r=2\bmod 4,
 \\[1mm]
\!-{\rm e}^{\frac{\pi {\rm i}}{2}\mu(M,s)}\sum_{a,f} {\rm e}^{2\pi {\rm i} \big(-\frac{r+1}{4}\lk(a,a)
 -\lk(a,f+b)-\frac{1}{2}\omega(a) -\lk(f,f)\big)}
 \\[-1mm]
\hspace{60mm} \text{if}\quad r=3\bmod 4,
\end{cases}
\label{coeffsummary}
\end{equation}
where
\begin{itemize}\itemsep=0pt
\item $\mathcal{T}(M,[\omega])$ is a suitable version of the Reidemeister torsion $($see Appendix~\ref{app:torsion}$)$;
\item $\lk(\cdot, \cdot)$ is the linking form on $H_1(M;\Z)$, and in the sum $a,f \in H_1(M;\Z)$ $($for more details about it, see Section~$\ref{sec:combinatorial})$;
\item $\sigma$ is the canonical map
\begin{equation*}
 \sigma\colon \ H_1(M;\Z)\times\operatorname{Spin}(M) \longrightarrow \operatorname{Spin}^c(M)
\end{equation*}
producing a spin$^c$ structure $\mathfrak{s}=\sigma(b,s)$ on $M$ from a spin structure\footnote{We use the standard typeface to denote spin structures (e.g., ``$s$'') and Fraktur typeface to denote spin$^c$ structures (e.g., ``$\mathfrak{s}$'').} $s$, and $b\in H_1(M;\Z)$ $($for more details about it, see Section~$\ref{sec:combinatorial})$.
\item $\mu(M,s)$ is the Rokhlin invariant of $M$ for spin structure $s$.\footnote{Note that conjecturally $\frac{1}{4}\mu(M,s)+\frac{1}{2}-\lk(b,b)= \Delta_{\sigma(b,s)}\bmod 1$, where $\Delta_{\mathfrak{s}}$ is the overall rational shift in the powers of $q$ in $\hat{Z}_{\mathfrak{s}}$ for a given $\mathfrak{s}\in \mathrm{Spin}^c(M)$~\cite{Gukov:2020frk}.}
\end{itemize}
\item[$(b)$] Furthermore, when the invariants $\hat{Z}_\mathfrak{s}(M;q)$ are defined one can take\footnote{As in the case of knots, the equality~\eqref{CGPZhatviac3} does not by itself uniquely fix $W_\mathfrak{s}(q)$, as one in particular can always add to them multiples of $\prod_{n\geq 1}(1-q^n)$.} $W_\mathfrak{s}(M)=\hat Z_\mathfrak{s}(M;q)$.
\end{enumerate}
\end{conj}

Note, the invariants $\N_r (M,\omega)$ are not defined for $r=0$ mod 4, which is why this case is omitted in~\eqref{coeffsummary}. (For more details about this case, see Section~\ref{sec:spincase}.)

The conjecture above is an analogue of the of the conjecture relating WRT invariant with the limits of $\hat{Z}_\mathfrak{s}$ that was formulated in~\cite{Gukov:2017kmk,Gukov:2016gkn}. It was proven for certain families of 3-manifolds in~\cite{MR4400935,fuji2021witten,Gukov:2016njj,mori2022witten}. Some elements of the latter conjecture can trace its origin to the work of Lawrence and Zagier~\cite{lawrence1999modular}. It is closely related to resurgence in analytically continued Chern--Simons theory~\cite{MR4400935,Gukov:2016njj} and quantum modularity~\cite{zagier2010quantum}.

We prove Conjecture~\ref{conj:cgpz} under some technical hypotheses specified in Theorem~\ref{thm:CGP-Zhat}, which can be stated in a simpler form as follows:

\begin{Theorem}[simplified form of Theorem~\ref{thm:CGP-Zhat-torsion}]
Let $M$ be a rational homology sphere obtained by integral surgery on a framed link $L\subset S^3$ for which Conjecture~$\ref{conj:park}$ $($appropriately extended from knots to links$)$ holds true. Then, under the technical hypotheses specified in Theorem~$\ref{thm:CGP-Zhat-torsion}$, the parts $(a)$ and $(b)$ of Conjecture~$\ref{conj:cgpz}$ are true for any pair $(M,\omega)$ with $\omega\in H^1(M;\C/2\Z)\setminus H^1(M;\Z/2\Z)$.
\end{Theorem}

In the statement of the theorem though, we point out that the status of the invariant $\hat{Z}$ is the following: assuming that parts $(a)$ and $(b)$ of Conjecture~\ref{conj:park} hold for $L$ (so that in particular a series $F_L$ is defined), there is an explicit formula providing $\hat{Z}$ for $M$ but the full proof of the convergence of the formula and its invariance is not yet available.

An infinite family of cases to which Theorem~\ref{thm:CGP-Zhat-torsion} applies is provided in Example~\ref{ex:theoremworks}.
The following is a list of cases for which the numerical evidence supports this result:
\begin{enumerate}\itemsep=0pt
\item $L$ is a plumbing link (for an infinite list of cases of these plumbing links the hypotheses of Theorem~\ref{thm:CGP-Zhat-torsion} can all be verified: see Example~\ref{ex:theoremworks});
\item $L$ is a trefoil knot.
\end{enumerate}
We also provide another kind of numerical evidence for the case of surgeries on the figure eight knot by cross checking the above conjectures with similar conjectures relating $\hat{Z}$ and the WRT invariants (see Section~\ref{sec:twistknots}).

 In the rest of the paper, we will sometimes omit explicit dependence of $\hat{Z}_\mathfrak{s}$ on $\mathfrak{s}$, and/or $M$, and/or $q$.

\begin{Remark}
 Conjecture~\ref{conj:cgpz} together with the conjectural relation between $\hat{Z}_\mathfrak{s}$ and mod-2-cohomology-refined WRT invariant (see Appendix~\ref{app:wrt-refined}) can be used to calculate the limiting values of $\hat{Z}_\mathfrak{s}$ at $q\rightarrow {\rm e}^{\frac{2\pi {\rm i}}{r}}$, $r\neq 0\bmod 4$. Namely, assuming that $\mathcal{T}(M,[\omega])\neq 0$ for $\omega\notin H^1(M;\Z/2\Z)$, one can invert the linear transform~\eqref{CGPZhatviac3} formally substituting
 \begin{equation*}
 \N_r(M,\omega)/\mathcal{T}(M,[\omega])\rightsquigarrow (-1)^r\WRT_r(M,\omega)/\big({\rm i}\sqrt{8r}\big),
 \end{equation*}
 when $\omega\in H^1(M;\Z/2\Z)$. For $r=0\bmod 4$ one can similarly use the relation between $\hat{Z}_\mathfrak{s}$ and spin-version of CGP invariants (see Section~\ref{sec:spincase}) together with the spin-refined WRT invariant (see~\cite{Gukov:2020frk}). This can be especially useful since for many 3-manifolds $\hat{Z}_\mathfrak{s}$ can be computed as $q$-series, but their modular properties, needed to determine the limiting values at roots of unity, are not known. Therefore, the relation to CGP and mod-2-cohomology-refined WRT invariants can serve as a substitute for modularity properties of $\hat{Z}_\mathfrak{s}$ in problems that involve limiting values at roots of unity.

 When $\mathcal{T}(M,[\omega])= 0$ but $\N_r(M,\omega)\neq 0$ for some $\omega\notin H^1(M;\Z/2\Z)$ this indicates that $\lim_{q\rightarrow {\rm e}^\frac{2\pi {\rm i}}{r}}\hat{Z}_{\mathfrak{s}}=\infty$ at least for some $\mathfrak{s}$. (The linear combinations of $\hat{Z}_\mathfrak{s}$ producing refined WRT invariants can still have finite limits.)
\end{Remark}

The study of the case of a manifold $M$ with positive first Betti number (denoted $b_1$ from now on) brought us to notice that the invariant $\hat{Z}$ for such manifolds requires an additional structure on $M$, namely the choice of a splitting $H_1(M;\Z)$ into its torsion part and its free part: $\operatorname{Tor}(H_1(M;\Z))\oplus \operatorname{Free}(H_1(M;\Z))$, which we will denote from now on $H_1(M;\Z)=T\oplus F$; we will also denote $b'$ (resp. $b''$) the projection of $b\in H_1(M;\Z)$ in $T$ (resp. $F$). For a fixed spin$^c$ structure on $M$, changing such a choice affects $\hat{Z}_\mathfrak{s}$ by multiplying it by a power of $q$ (see formula~\eqref{splitting-anomaly} for the precise formulation) which we call the splitting anomaly.

A version of Conjecture~\ref{conj:cgpz} for the case of non-rational-homology-spheres is then the following:

\begin{conj}\label{conj:cgpz2}
Let $M$ be any closed oriented $3$-manifold. Choose a splitting $H_1(M;\Z)$ $=T\oplus F$, where $T$ is the torsion part and $F$ the free part.
\begin{enumerate}\itemsep=0pt
\item [$(a)$] There exist $W_{\mathfrak{s}}(M;T,F)\in q^{\Delta_\mathfrak{s}}\Q[[q]]\big[q^{-1}\big]$, $\mathfrak{s}\in \mathrm{Spin}^c(M)$ $($where, as explained above $H_1(M)=T\oplus F$, with $F$ the free part$)$ for some $\Delta_\mathfrak{s}\in \Q$ such that the following holds:
\begin{equation}
\N_r (M,\omega) = \bigg(\sum_{b=b'+r'm, b'\in T, m\in F} c^{\mathrm{CGP}}_{\omega,\sigma(b,s)} W_{\sigma(b,s)} (M)\bigg)\bigg|_{q \to {\rm e}^{\frac{2 \pi {\rm i}}{r}}}
\label{CGPZhatviac2}
\end{equation}
with
\begin{align*}
&c^{\mathrm{CGP}}_{\omega,\sigma(b'+ r'm,s)}= \frac{r^{b_1} \mathcal{T}(M,[\omega])}{{|\Tor H_1(M;\Z)}|}\nonumber
\\
&{}\qquad\qquad\times\!\begin{cases}
 \sum_{\substack{a',f'\in T}}
 {\rm e}^{2\pi {\rm i}\big( -\frac{r-1}{4}\lk(a',a') +\lk(a',f'-b')-\frac{1}{2}\omega(a')
 +\lk(f',f') +\omega''(m)-\frac{1}{4}\mu(M,s)+\frac{1}{2}\big)}
 \\[-1mm]
\hspace{80mm}\text{if}\quad r=1 \bmod 4,
\\[1mm]
 \sqrt{| \Tor H_1(M;\Z)|} \sum_{\substack{a'\in T}}
 {\rm e}^{-\frac{\pi {\rm i}r}{2} q_s(a')-2\pi {\rm i} \lk(a',b')-\pi {\rm i} \omega(a')+\pi {\rm i} \omega''(m)}
 \\[-1mm]
\hspace{80mm}\text{if}\quad r=2\bmod 4,
 \\[1mm]
 \sum_{\substack{a',f'\in T}} {\rm e}^{2\pi {\rm i}\bigl(-\frac{r+1}{4}\lk(a',a')
 -\lk(a',f'+b')-\frac{1}{2}\omega(a') -\lk(f',f')
 +\omega''(m)+\frac{1}{4}\mu(M,s)+\frac{1}{2}\bigr)}
 \\[-1mm]
\hspace{80mm}\text{if}\quad r=3\bmod 4,
\end{cases}\hspace{-5mm}
\end{align*}
where $r'=r$ if $r$ is odd and $\frac{r}{2}$ else and $\omega'$ $($resp. $\omega'')$ is the restriction of $\omega$ on $T$ $($resp. on~$F)$.

\item[$(b)$] If $\hat{Z}_\mathfrak{s}(M)$ is defined, then one can take\footnote{As will be made apparent in Section~\ref{sec:Zhat-def-b1}, for $b_1>0$, the $q$-series $\hat{Z}_\mathfrak{s}$ are defined only up to a certain equivalence relation. The statement is that any representative in the equivalence class can be taken.} $W_\mathfrak{s}(M)=\hat{Z}_\mathfrak{s}(M)$.
\end{enumerate}
\end{conj}

\begin{Remark}
In the above conjecture, the choice of the splitting is auxiliary: as explained above, a different choice will provide different values for $W_{\mathfrak{s}}$ but these choices are compensated by the change in the coefficients $c^{\mathrm{CGP}}$, so that the left-hand side is indeed independent on the splitting. Equivalently, the behavior of $W_{\mathfrak{s}}$ under the choice of such splitting is controlled by the behavior of the coefficients $c^{\mathrm{CGP}}$. Clearly, if $M$ is a rational homology sphere the splitting is irrelevant and the above conjecture reduces to Conjecture~\ref{conj:cgpz}.
\end{Remark}

\begin{Remark}
The conjecture above implicitly assumes that the sum over $m$ in~\eqref{CGPZhatviac2}, which is infinite for $b_1>0$, converges coefficient-wise in the space of Puiseux series in $q$ with complex coefficients and, moreover, the resulting sum is a series of a continuous function in $\{{|q|<1},\allowbreak\operatorname{arg} q\neq 0\}$ with finite radial limits $q\rightarrow {\rm e}^{\frac{2\pi {\rm i}}{r}}$.
\end{Remark}

Again, we can prove the above conjecture under suitable technical hypotheses which are omitted in the following statement:
\begin{Theorem}
Under the technical hypotheses of Theorem~$\ref{thm:CGP-Zhat-torsion-b1}$, Conjecture~$\ref{conj:cgpz2}$ holds true for any pair $(M,\omega)$ with $\omega\in H^1(M;\C/2\Z)\setminus H^1(M;\Z/2\Z)$.
\end{Theorem}

We provide the following tests:
\begin{enumerate}\itemsep=0pt
\item[1)] $M=\Sigma\times S^1$ for some closed oriented surface $\Sigma$;
\item[2)] $M$ obtained by integral surgery on a framed link $L\subset S^3$ for which Conjecture~\ref{conj:park} holds true.
\end{enumerate}

Moreover, an infinite family of cases to which Theorem~\ref{thm:CGP-Zhat-torsion-b1} applies is provided in Example~\ref{ex:theoremworks-b1}.

In~\cite{blanchet2016non}, the invariants $\N_r$ were extended to a TQFT defined on a suitable category of cobordisms decorated with (relative) cohomology classes.
Conjecturally, it should be possible to do the same for the invariant $\hat{Z}$. In Section~\ref{sec:TQFToperations}, we discuss this possibility and define (or sketch) two ``operations on TQFTs" which, if applied to the conjectural TQFT for $\hat{Z}$ would produce the TQFT built in~\cite{blanchet2016non}, thus extending (unfortunately only partially) Conjecture~\ref{conj:cgpz} to the case of cobordisms.

The rest of the paper is organized as follows. In general, we tried to be as self-contained as possible and we ascribed the physical motivations to dedicated sections so that the paper should be accessible for both mathematicians and physicists.
Section~\ref{sec:prelims} gives a self-contained review of all the relevant invariants that we wish to relate and introduces a number of technical tools that are used throughout the paper. After presenting a few families of concrete examples in Section~\ref{sec:examples}, we then proceed to a more general and systematic discussion of the relation between~$\N_r$ and~$\hat{Z}$ invariants in Section~\ref{sec:relation}. (A reader more interested in a general argument may prefer to read Section~\ref{sec:relation} first, before going through the examples in Section~\ref{sec:examples}.) In Section~\ref{sec:TQFT}, $\hat{Z}$ and $\N_r$ are considered as decorated TQFT's and we propose how a relation between this richer structure can extend the relation between numerical 3-manifold invariants in the previous sections. Since both~$\hat{Z}$ and~$\N_r$ are related to Witten--Reshetikhin--Turaev (WRT) invariants, it is natural to ask whether our proposed relation between $\hat{Z}$ and $\N_r$ is compatible with the previously known relations. This question is answered in the affirmative in Section~\ref{sec:WRT}. Finally, many useful facts about various refined invariants, gradings in CGP TQFT, and details related to the order of limits are collected in appendices.

\section{Preliminaries}
\label{sec:prelims}

\subsection{ADO invariants of links in a three-sphere}

One of the oldest non-semisimple invariants is the collection~-- labeled by integer $r \ge 2$~-- of polynomial link invariants introduced by Akutsu--Deguchi--Ohtsuki~\cite{MR1164114}, or ADO invariants for short.
Namely, let $L\subset S^3$ be an oriented framed link with $V$ components and $\xi=\exp{\frac{2\pi {\rm i}}{2r}}$ for some $r\geq 2$. A coloring of $L$ is an assignment to each component $L_I$ of $L$ of a complex number~$\alpha_i$.
Then, the ADO invariant of $L$ colored by $\{ \alpha_i \}$ is an element of $\Z[\xi^{\pm\alpha_1},\ldots , \xi^{\pm \alpha_V}]$ if $V\geq 2$ and an element of $\frac{1}{\xi^{r \alpha}-\xi^{-r\alpha}}\Z[\xi^{\pm \alpha}]$ otherwise \big(here $\xi^\alpha=\exp\big(\frac{2\pi {\rm i}\alpha}{2r}\big)$\big).

On the one hand, these polynomial link invariants are close cousins of the Alexander polynomial, and include the Alexander polynomial as a special case (corresponding to $r=2$), cf.~\cite{MR1197048}. On the other hand, the ADO polynomials can be viewed as close cousins of the quantum group invariants that play a role in the Reshetikhin--Turaev construction. Indeed, the ADO polynomials were later re-formulated by Murakami~\cite{Murakami} in terms of the $R$-matrix and quantum groups of at roots of unity. Specifically, the $R$-matrix (and its inverse) used in~\cite{Murakami} is a map $W_{\lambda} \otimes W_{\mu} \to W_{\mu} \otimes W_{\lambda}$:
\begin{gather}
 R_{kl}^{ij} = \xi^{\frac{1}{2} (\lambda - 2i-2n)(\mu - 2j+2n) + n(n-1)/2}
\frac{\{ i+n;n \} \{ \mu-j+n;n \} }{ \{ n;n \} },\nonumber
\\
\big(R^{-1}\big)_{kl}^{ij} = (-1)^n \xi^{- \frac{1}{2} (\lambda - 2i)(\mu - 2j) - n(n-1)/2}
\frac{\{ j+n;n \} \{ \lambda-i+n;n \} }{ \{ n;n \} },
\label{ADOR}
\end{gather}
where $n=l-i=j-k$, $\{ a \} = {\xi^a - \xi^{-a}}$, $\{ x;n \} = \prod_{i=0}^{n-1} \{ x-i \}$ and $H v_i^{\lambda} = (\lambda - 2i) v_i^{\lambda}$. We warn the reader that with respect to the notation we will adopt in the definition given in Section~\ref{defiADO} the the colors $\mu,\lambda$ above are highest weight rather than middle weights of the modules. This creates a shift of $r-1$ in the colors with respect to the convention adopted later on, but we mention the above expression because we will return to it in Section~\ref{sec:GPPV} and compare it to the $R$-matrix that one encounters in the study of $\hat Z$-invariants.

Also, in Section~\ref{sec:CGP} below we review how the ADO invariants were used in~\cite{costantino2014quantum} to define invariants of $3$-manifolds $M$ endowed with a cohomology class $\omega\in H^1(M;\C/2\Z)$. Namely, if $L$ is a link such that $M$ is obtained by integral surgery on $L$, then $\omega$ induces a coloring on $L$ by setting $\overline{\alpha}_I=\omega(\mathfrak{m}_I)$ where $\mathfrak{m}_I$ is the oriented meridian of $L_I$.

The study of these invariants remained detached from physics for almost 30 years. This is especially surprising given a large number of close ties that WRT invariants of knots and 3-manifolds have with quantum field theory and string theory. The situation started to change about a year ago~\cite{Gukov:2020lqm} and we hope that the present paper can be another step toward bridging this gap, see also~\cite{Brown:2020qui,Chae:2020ldd,Cheng:2018vpl,MR3611724,Costello:2018swh, toappear1,toappear2,Ferrari:2020avq,Qiu:2020mji} for closely related work.

\subsection[Combinatorics of spin and spin$^{c}$-structures]{Combinatorics of spin and spin$^{\boldsymbol c}$-structures}
\label{sec:combinatorial}

Let $M$ be obtained as an integral surgery on a framed oriented link $L$ in $S^3$ and let $\omega \in H^1(M;\C/2\Z) \setminus H^1(M;\Z/2\Z).$
Let us index the components of $L$ by a set $\vert$ and denote its $V\times V$ linking matrix by $B_{IJ}$, $I,J\in \vert$, $V:=|\vert|$. Then we have the following identifications (see~\cite{deloupmassuyeau} for details):
\begin{gather}
H_1(M;\Z)\cong \Z^\vert /B\Z^\vert,
\label{plumbed-H_1}
\\
H^1(M;\Z_2)\cong \bigg\{c\in \Z_2^\vert \bigg| \sum_{J\in \vert}B_{IJ}c_J=0\bmod 2, \forall I\bigg\},
\label{plumbed-H1-Z2}
\\
H^2(M;\Z)\cong \{h\in \Z^\vert/B\Z^\vert \},
\label{plumbed-H2-Z}
\\
\Spin(M)\cong \bigg\{s\in \Z_2^\vert \bigg| \sum_{J\in \vert}B_{IJ}s_J=B_{II}\bmod 2, \forall I\bigg\},
\label{plumbed-spin}
\\
\mathrm{Spin}^c(M) \cong \operatorname{Char}(B)/2B\Z^\vert =
\big\{ K\in \Z^\vert /2B\Z^\vert \mid K_I=B_{II}\bmod 2,\forall I \big\},\label{spinc}
\end{gather}
where $\operatorname{Char}(B):=\big\{K\in \Z^\vert \mid K^Tn=n^TBn\bmod 2,\, \forall n\in\Z^\vert\big\}$ is the space of characteristic vectors of the lattice dual to the lattice $\Z^\vert$ with the quadratic form $B$. Here and in what follows, we use $(\ldots)^T$ to denote vector transposition. The one-to-one correspondence between the elements $a$ and $K$ in~\eqref{plumbed-spinc} is given by $a=K-B\varepsilon$, where
\begin{equation}
 \varepsilon:=(1,1,1,\ldots,1) \in \Z^\vert.
 \label{epsilon-vector}
\end{equation}
The Bockstein homomorphism $\beta\colon H^1(M;\Z_2)\to H^2(M;\Z)$ writes in the above notation as follows; for $c\in \Z_2^\vert$ let $\tilde{c}\in \Z^\vert$ be such that $\tilde{c}\cong c \bmod 2$. Then
\begin{equation}
 \beta(c)=\frac{1}{2}B\tilde{c}\in \Z^\vert/B\Z^\vert,
\end{equation}
where we observe that since $B\tilde{c}$ is even, division by two is possible and its result in $\Z^\vert/B\Z^\vert$ is independent on the choice of $\tilde{c}$.
We observe that $\Spin(M)$ is affine over $H^1(M;\Z_2)$ via component-wise addition and $\mathrm{Spin}^c(M)$ is affine over $H^2(M;\Z)$ by defining $(K+[h])_I:=K_I+2h_I$, $\forall I\in \vert$.
Observe also that there is a canonical map $i\colon\Spin(M)\to \mathrm{Spin}^c(M)$ which in the above notation writes
\begin{equation}
i(s)_I=\sum_{J\in \vert} B_{IJ} \tilde{s}_J,
\end{equation}
where $\tilde{s}$ is any lift of $s\in \Z_2^\vert$ to $\tilde s \in \Z^{\vert}$ so that $s=\tilde{s} \bmod 2$.
Furthermore, it is clear that $i$ is affine over the Bockstein homomorphism
\begin{equation}
i(s+c)=i(s)+2\beta(c)\qquad \forall s\in \Spin(M), \quad c\in H^1(M;\Z_2).
\end{equation}

Later we will use the canonical map
$\sigma$
\begin{equation}
 \sigma\colon \ H_1(M;\Z)\times\operatorname{Spin}(M) \longrightarrow \operatorname{Spin}^c(M)
 \label{eq:definitionsigma}
\end{equation}
induced by the map $ B\operatorname{Spin}\times B\operatorname{U}(1) \rightarrow B\operatorname{Spin}^c$ between the corresponding classifying spaces, combined with the isomorphisms $B\operatorname{U}(1)\cong B^2\Z$, $H_1(M;\Z)\cong H^2(M;\Z)$.
In terms of the above combinatorial encodings, it is
\[
\sigma(s,b)=i(s)+2b.
\]
We will also need the linking pairing
\begin{equation}
 \lk\colon \ \Tor H_1(M;\Z)\otimes \Tor H_1(M;\Z)\longrightarrow \Q/\Z
\end{equation}
and its quadratic refinement~\cite{kirby1990p}
\begin{equation}
 q_s\colon \ \Tor H_1(M;\Z) \longrightarrow \Q/2\Z,\qquad q_s(a+b)-q_s(a)-q_s(b)=2 \lk (a,b).
\end{equation}
depending on a spin structure $s\in \Spin(M)$ as follows:
\begin{equation}
 q_{s+c}(a)-q_s(a)=c(a),\qquad c\in H^1(M;\Z_2).
 \label{qref-spin-change}
\end{equation}
In terms of the identifications~\eqref{plumbed-H_1}--\eqref{plumbed-spinc}, we have
\begin{gather}
 \lk(a,b) = a^TB^{-1}b\bmod 1,
 \label{plumbed-lk}
\\
 q_s(a) = a^TB^{-1}a+s^Ta\bmod 2.
 \label{plumbed-qref}
\end{gather}
We will also use the following expression for the mod 4 reduction of Rokhlin invariant (see, e.g.,~\cite{kirbymelvin}):
\begin{equation}
 \mu(M,s)=\sigma -s^TBs\bmod 4,
 \label{Rokhlin-mod4}
\end{equation}
where $\sigma$ is the signature of the linking matrix $B$.

A special case is when $M$ is a plumbed manifold, i.e., all the components of $L$ are unknots which are linked to each other as Hopf links according to the combinatorial structure of a~contractible graph $\Gamma$, the vertices of which are indexed by $\vert$.
In this case, letting $\deg(I)$ be the degree of $I\in \vert$ (i.e., the number of edges containing $v$), we also have the following identification for spin$^c$-structures:
\begin{equation}\label{plumbed-spinc}
\mathrm{Spin}^c(M) \cong \big\{a\in \Z^\vert /2B\Z^\vert \mid a_I=\deg(I)\bmod 2\big\}.
\end{equation}

\subsection{WRT invariants}

Let $\xi=\exp{\frac{\pi {\rm i}}{r}}$, $\CU^H_\xi(\mathfrak{sl}_2)$ be the so-called ``unrolled version'' of the quantum $\mathfrak{sl}_2$ algebra as defined in~\cite{costantino2014quantum}, given by generators $E$, $F$, $H$, $K$, $K^{-1}$
and relations
\begin{gather*}
 KK^{-1}=K^{-1}K=1, \qquad
 KEK^{-1}=\xi^2E, \qquad
 KFK^{-1}=\xi^{-2}F,\qquad
 [E,F]=\frac{K-K^{-1}}{\xi-\xi^{-1}},
 \\
 HK=KH,\qquad
 [H,E]=2E, \qquad
 [H,F]=-2F.
\end{gather*}
The algebra $\CU^H_\xi(\mathfrak{sl}_2)$ is a Hopf algebra where the coproduct, counit and
antipode are defined as follows:
\begin{alignat*}{4}
 &\Delta(E)= 1\otimes E + E\otimes K,\qquad
 &&\varepsilon(E)= 0,\qquad
 &&S(E)=-EK^{-1},
 \\
 &\Delta(F)=K^{-1} \otimes F + F\otimes 1,\qquad
&& \varepsilon(F)=0, \qquad
&& S(F)=-KF,
 \\
& \Delta(K)=K\otimes K, \qquad
&& \varepsilon(K)=1,\qquad
&& S(K)=K^{-1},
 \\
 &\Delta(H)=H\otimes 1 + 1 \otimes H,\qquad
 && \varepsilon(H)=0,\qquad
 &&S(H)=-H.
\end{alignat*}

Let $\cat$ be the category of finite dimensional weight modules (i.e., modules on which $H$ acts diagonally) and on which $K$ acts as $\xi^H$ and such that $E^r$ and $F^r$ act as $0$.
We will now recall that the category $\cat$ is a ribbon category.
 Let $V$ and $W$ be
objects of~$\cat$. Let $\{v_i\}$ be a basis of~$V$ and~$\{v_i^*\}$ be
a dual basis of $V^*=\Hom_\C(V,\C)$. Then
\begin{gather*}
 \coev_V\colon \ \C \rightarrow V\otimes V^{*}, \qquad \text{given by } 1 \mapsto \sum
 v_i\otimes v_i^*,
 \\
 \ev_V\colon \ V^*\otimes V\rightarrow \C, \qquad \text{given by }
 f\otimes w \mapsto f(w)
\end{gather*}
are duality morphisms of~$\cat$.
In~\cite{ohtsuki}, Ohtsuki truncates the usual formula of the $h$-adic
quantum~$\mathfrak{sl}_2$ $R$-matrix to define an operator on $V\otimes W$ by
\begin{equation*}
 R=\xi^{H\otimes H/2} \sum_{n=0}^{r-1} \frac{\{1\}^{2n}}{\{n\}!}\xi^{n(n-1)/2} E^n\otimes F^n,
\end{equation*}
where $\xi^{H\otimes H/2}$ is the operator given by
\begin{equation*}
\xi^{H\otimes H/2}(v\otimes v') =\xi^{\lambda \lambda'/2}v\otimes v'
\end{equation*}
for weight vectors $v$ and $v'$ of weights of $\lambda$ and
$\lambda'$. The $R$-matrix is not an element in $\CU^H_\xi(\mathfrak{sl}_2)\otimes \CU^H_\xi(\mathfrak{sl}_2)$,
however the action of $R$ on the tensor product of two objects of~$\cat$ is a well defined linear map on such a tensor
product. So, $R$ gives rise to a braiding $c_{V,W}\colon V\otimes W
\rightarrow W \otimes V$ on~$\cat$ defined by $v\otimes w \mapsto
\tau(R(v\otimes w))$, where $\tau$ is the permutation $x\otimes
y\mapsto y\otimes x$.
 Also, let $\theta$ be the operator given by
\begin{equation*}
\theta=K^{r-1}\sum_{n=0}^{r-1}
\frac{\{1\}^{2n}}{\{n\}!}\xi^{n(n-1)/2} S(F^n)\xi^{-H^2/2}E^n,
\end{equation*}
where $\xi^{-H/2}$ is an operator defined on a weight vector $v_\lambda$ by
$\xi^{-H^2/2}.v_\lambda = \xi^{-\lambda^2/2}v_\lambda$.
Ohtsuki shows that the family of maps $\theta_V\colon V\rightarrow V$ in
$\cat$ defined by $v\mapsto \theta^{-1}v$ is a twist (see
\cite{ohtsuki}).

Now the ribbon structure on $\cat$ yields right duality morphisms
\begin{equation*}
{\tev_{V}} = {\ev_{V}}c_{V,V^*}(\theta_V\otimes\Id_{V^*})\qquad\text{and}\qquad
{\tcoev_V} =(\Id_{V^*}\otimes\theta_V)c_{V,V^*}\coev_V,
\end{equation*}
which are compatible with the left duality morphisms $\{\coev_V\}_V$ and
$\{\ev_V\}_V$. These duality morphisms are given by
\begin{gather*}
 \tcoev_{V}\colon \ \C \rightarrow V^*\otimes V,\qquad \text{where}\quad
 1 \mapsto \sum K^{r-1}v_i \otimes v_i^*,
 \\
 \tev_{V}\colon \ V\otimes V^*\rightarrow \C,\qquad \text{where}\quad v\otimes f \mapsto f\big(K^{1-r}v\big).
\end{gather*}

\begin{Definition}\label{def:qdim}
The \emph{quantum dimension} $\qdim(V)$ of an object $V$ in $\cat$ is the $\qdim(V)= {\tev_V\circ \coev_V}=\sum v_i^*\big(K^{1-r}v_i\big)$.
\end{Definition}

For each $n \in \{0,\ldots,r-1\}$, let $S_n$ be the usual
$(n+1)$-dimensional simple highest weight $\mathcal{U}_\xi^H(\mathfrak{sl}_2)$-module with
highest weight $n$. The module $S_n$ is a highest weight module with a highest weight vector $s_0$ such that $Es_0=0 $ and $Hs_0=ns_0$. Then
 $\{s_0, s_1,\ldots, s_n\}$ is a basis of $S_n$, where $Fs_i=s_{i+1}$, $H.s_i=(n-2i)s_i$, $E.s_0=0=F^{n+1}.s_0$
and $E.s_i=\frac{\{i\}\{n+1-i\}}{\{1\}^2}s_{i-1}$. The quantum
dimension of $S_n$ is
$\qdim(S_n)=(-1)^n\frac{\{n+1\}}{\{1\}}$.
Next we consider a larger class of finite dimensional highest weight modules:
for each $\alpha\in \C$, we let $V_\alpha$ be the $r$-dimensional
highest weight $\mathcal{U}^H_\xi(\mathfrak{sl}_2)$-module of highest weight $\alpha + r-1$ (we stress here that $\alpha$ is then the ``mid-weight'' of $V_\alpha$ as opposed to its highest weight which is $\alpha+r-1$). The
modules $V_\alpha$ has a basis $\{v_0,\ldots,v_{r-1}\}$ action on which is
given by
\begin{equation*}
H.v_i=(\alpha + r-1-2i) v_i,\qquad
E.v_i= \frac{\{ i\}\{i-\alpha\}}{\{1\}^2}v_{i-1} ,\qquad
F.v_i=v_{i+1}.
\end{equation*}
For all $\alpha\in \C$, the quantum dimension of $V_\alpha$ is zero:
\begin{equation*}
\qdim(V_\alpha)= \sum_{i=0}^{r-1} v_i^*\big(K^{1-r}v_i\big)=
 \sum_{i=0}^{r-1} \xi^{(1-r)(\alpha + r-1-2i)} =
 \xi^{(1-r)(\alpha + r-1)}\frac{1-\xi^{2r}}{1-\xi^{2}}=0.
\end{equation*}

As shown in~\cite{costantino2015relations}, if $L$ is a framed oriented link in $S^3$ colored by modules $S_{c_I}$, where $c_I$ denotes the coloring of the $I^{\rm th}$ component of the link, then
\begin{equation*}
F(L)=J_{{c}}(L),
\end{equation*}
where $F$ is the Reshetikhin--Turaev functor and $J_{{c}}$ is the (unnormalised) skein theoretical colored Jones polynomial of $L$, so that in particular the value for the $S_i$-colored unknot is $(-1)^i[i+1]$ (as usual, $[n]:=\{n\}/\{1\}$). Another well known version of the colored Jones polynomial, which we shall call ``representation theoretical'', is obtained by considering the full subcategory $\cat'$ of~$\cat$ generated by the modules $S_n$ with the same braiding and ribbon structure as above, with the only difference in the following morphisms:
\begin{gather*}
\tcoev_{V}\colon \ \C \rightarrow V^*\otimes V,\qquad \text{where}\quad 1 \mapsto
 \sum K^{-1}v_i \otimes v_i^*,
 \\
 \tev_{V}\colon \ V\otimes V^*\rightarrow \C, \qquad\text{where}\quad v\otimes f \mapsto f(Kv).
\end{gather*}
Then the image of $L$ via the Reshetikhin--Turaev functor associated to the category $\cat'$ is the version of the colored Jones considered for instance in~\cite{kirbymelvin}, let us denote it $V_c(L)$.
\begin{Remark}
The definition of representation theoretical Jones polynomial can be extended to generic values of $q$ as opposed to $\xi=\exp\big(\frac{{\rm i}\pi}{r}\big)$.
\end{Remark}

\begin{Lemma}\label{lem:comparison}
It holds $V_{{c}}(L)=(-1)^{\sum_{I\in \vert}(B_{II}+1)c_I}J_{{c}}(L).$
\end{Lemma}
\begin{proof}
It is sufficient to compare when $L$ is the closure of a braid so that for each component of the resulting link we have $B_{II}=0$ for all $I$. Then the only difference between the two Reshetikhin--Turaev functors associated to $\cat'$ and $\cat$ is coming from the $m$ maxima: a maximum colored by $c_I$ will contribute to $J_{{c}}(L)$ via $K^{1-r}$ and to $V_c(L)$ via $K^{}$ and since $K^r$ acts as $\xi^{rc_I}=(-1)^{c_I}$ the overall difference is $(-1)^{\sum c_I}$ where the sum is taken over all the $m$ strands of the braid. But now we claim that this sign coincides with $(-1)^{\sum_{I}(B_{II}+1)c_I}$. Indeed, observe that $B_{II}$ is not changed (modulo $2$) if one switches the crossings of the braid so that we can prove the claim when the link is actually a disjoint union of unknots. Now for each such unknot colored by odd $c_I$ we observe that each Reidemeister $1$ move changes by one both the number of maxima and $B_{II}$, while the other Reidemeister moves do not change these values. Finally, for the standard diagram of the unknot the statement is true.
\end{proof}

\begin{Lemma}[symmetry principle for skein theoretical Jones polynomials]\label{lem:symmetryprinciple}
Let $\xi=\exp(\frac{\pi {\rm i}}{r})$ and~$L$ be a framed oriented link colored by $S_{c_1},\ldots, S_{c_V}$ with $0\leq c_I\leq r-2$. For each $a_I\in \{0,1\}$ and $c_I\in \{0,\ldots, r-2\}$, let $a_I*c_I:=a_I(r-2-c_I)+(1-a_I)c_I$, and for ${a}\in \{0,1\}^V$, let $a*c:=(a_1*c_1,\ldots, a_V*c_V)$.
Then it holds:
\begin{equation*}
J_{a*c}(L)={\rm i}^{(r-2)\sum_{I,J\in \vert}B_{IJ}a_Ia_J}(-1)^{\sum_{I,J\in \vert}B_{IJ}a_Ic_J}(-1)^{\sum_{I\in \vert}(B_{II}+1)(r-2)a_I}J_c(L).
\end{equation*}
\end{Lemma}

\begin{proof}
The symmetry principle for the representation theoretical Jones polynomial stated in~\cite[formula~(4.20)]{kirbymelvin} is
\begin{equation*}
V_{a*c}(L)={\rm i}^{(r-2)\sum_{I,J\in \vert}B_{IJ}a_Ia_J}(-1)^{\sum_{I,J\in \vert}B_{IJ}a_Ic_J}V_c(L).
\end{equation*}
By Lemma~\ref{lem:comparison}, for each component where $a_I\neq 0$ we acquire an additional factor
\begin{equation*}
 (-1)^{(B_{II}+1)(c_I-(r-2-c_I))}=(-1)^{(B_{II}+1)(c_I-(r-c_I))}. \tag*{\qed}
\end{equation*}
\renewcommand{\qed}{}
\end{proof}

We end this section by recalling the definition of the Witten--Reshetikhin--Turaev invariants in their TQFT normalisation, according to Blanchet--Habegger--Masbaum--Vogel~\cite{blanchetetal}:
\begin{Definition}
Let $D=\sqrt{\frac{r}{2}}\big(\sin\big(\frac{\pi}{r}\big)\big)^{-1}$ and $M$ be the three-manifold obtained by surgery on a framed oriented link $L$. Then
\begin{equation*}
\WRT_r(M)=D^{-b_0(M)-b_1(M)}U_+^{-b_+}U_-^{-b_-}\sum_{{c}}\qdim(c)J_{{c}}(L),
\end{equation*}
where $U_{\pm}=\sum_{c=0}^{r-2}\qdim(c)J_c(u_{\pm})$, $u_\pm$ is the unknot with framing $\pm$, ${c}$ runs over all the colorings of $L$ with colors in $\{0,1,\ldots, r-2\}$, $\qdim({c})=\prod_{I=1}^V \qdim(c_I)$ (see Definition~\ref{def:qdim}), $b_i(M)$ are the Betti numbers of $M$ and $b_\pm$ are the number of positive (or negative) eigenvalues of the linking matrix of $L$.
\end{Definition}

We also recall that there exists a cohomological refinement of $\WRT_r(M)$ defined for cohomology classes $\omega\in H^1(M;\Z/2\Z)$ (see~\cite{costantino2015relations,kirbymelvin}).
Given such a class and an oriented link $L\subset S^3$ such that $M$ is obtained by surgery over $L$, then $\omega$ induces a parity on the components of $L$ via $\omega(L_I):=\omega(\mathfrak{m}_I)\in \Z/2\Z$, where $\mathfrak{m}_I$ is homology class of the meridian of $L_I$. Let
\begin{equation*}
 \Delta_-^{{\rm SO}(3)}:=\frac{\Delta_-}{(-1)^{r-1}(\xi-\xi^{-1})},\qquad
 \Delta_+^{{\rm SO}(3)}:=-\frac{\Delta_+}{(-1)^{r-1}(\xi-\xi^{-1})},
\end{equation*}
where $\Delta_\pm$ are defined in~\eqref{delta-minus} and~\eqref{delta-plus}. Then one defines (see also Appendix~\ref{app:wrt-refined}):
\begin{Definition}
\label{def-wrt-refined}
\begin{equation*}
\WRT_r(M,\omega)=\frac{D^{-b_0(M)-b_1(M)}}{\big(\Delta_+^{{\rm SO}(3)}\big)^{b_+}\big(\Delta_-^{{\rm SO}(3)}\big)^{b_-}}\sum_{{c}= \omega\bmod 2}\qdim(c)J_{{c}}(L),
\end{equation*}
where ${c}$ runs over all the colorings of $L$ with values in $\{0,1,\ldots, r-2\}$ which are congruent $\bmod~2$ to $\omega(L_I)$ for each component of $L$.
\end{Definition}

\begin{Remark}
With respect to~\cite{kirbymelvin}, we index modules by highest weight vectors instead of by their dimension, this causes a shift by one in the above summation range.
\end{Remark}

\subsection[$\N_r$ invariants]{$\boldsymbol{\N_r}$ invariants}
\label{sec:CGP}

With the above notation, to calculate $\N_r(M,\omega)$ in this case it is enough to introduce the following definitions:\footnote{In~\cite{costantino2014quantum}, $q$ was used instead of $\xi:={\rm e}^{\frac{\pi {\rm i}}{r}}$ and, mainly, the definition of $d(\alpha)$ differs by a factor $(-1)^{r-1}r$: this only affects $\N_r$ by an overall scalar.}
\begin{gather}
 d(\alpha):=\frac{\sin\frac{\pi\alpha}{r}}{\sin\pi\alpha} =\frac{\xi^\alpha-\xi^{-\alpha}}{\xi^{r\alpha}-\xi^{-r\alpha}},
 \label{modified-q-dim-formula}
\qquad
 S(\alpha,\beta):=\xi^{\alpha\beta},
\qquad
 T(\alpha):=\xi^\frac{\alpha^2-(r-1)^2}{2},
\\
 \Delta_-:= \begin{cases}
 0, & r=0\bmod 4,\\
{\rm i}\xi^\frac{3}{2}r^\frac{1}{2}, & r=1\bmod 4, \\
(1-{\rm i})\xi^\frac{3}{2}r^\frac{1}{2}, & r=2\bmod 4, \\
 -\xi^\frac{3}{2}r^\frac{1}{2}, & r=3\bmod 4,
 \end{cases}
 \label{delta-minus}
\\
 \Delta_+=\overline{\Delta_-}.
 \label{delta-plus}
\end{gather}
As before, let us denote by $\mathfrak{m}_I\in H_1(M;\Z)$ the homology class of the oriented meridian of the component of $L$ indexed by $I\in \vert$. Then define
\begin{equation*}
 \mu_I:=\omega(\mathfrak{m}_I)\in \C/2\Z.
\end{equation*}
We will assume from now on that $\mu_I\notin \Z/2\Z$, $\forall I$ (it was proved in~\cite{costantino2014quantum} that one can always find~$L$ presenting $M$ such that this condition is satisfied if $\omega\in H^1(M;\C/2\Z)\setminus H^1(M;\Z/2\Z)$).
Note that
\begin{equation}
 \sum_{J}B_{IJ}\mu_J=0\bmod 2.
 \label{mu-condition}
\end{equation}

\subsubsection[$\N_r$ for links, ADO and Conway--Alexander polynomials]{$\boldsymbol{\N_r}$ for links, ADO and Conway--Alexander polynomials}\label{defiADO}

As proved in~\cite{MR1164114} (but in what follows we use the notation used in~\cite{costantino2014quantum}), there is an invariant $\N_r(L_\mu)$ associated to each framed oriented link with a $\C$-coloring $\mu_1,\ldots, \mu_V$ of its components, which, up to a factor depending on the linking form, is valued in $\C[\xi^{\pm\mu_1},\ldots, \xi^{\pm\mu_V}]$ if $V>1$ and in $\frac{\xi^{\mu_1}-\xi^{-\mu_1}}{\xi^{r\mu_1}-\xi^{-r\mu_1}}\C[\xi^{\pm\mu_1}]$ in the knot case.

This invariant is computed by first cutting open $L$ on one of its components (say the $I^{\rm th}$) to get a $(1,1)$-tangle $T$, then computing the Reshetikhin--Turaev functor $F$ applied to $T$ by considering a color $\mu_J$ as the $r$-dimensional simple projective module $V_{\mu_J}$ over $\CU^H_\xi(\mathfrak{sl}_2)$ thus getting
\begin{equation*}
F(T)=T(V_{\mu_I})\Id_{V_{\mu_I}}
\end{equation*}
for some scalar $T(V_{\mu_I})$; then defining $\N_r(L_\mu)=T(V_{\mu_I})d(\mu_I)$. It can be proved that this value does not depend on the way $L$ was cut to obtain $T$ and is then an invariant of the colored oriented framed link $L_\mu$.

Comparing with the original definition of the ADO polynomial of links~\cite{MR1164114}, the results of~\cite{Murakami} and taking into account the differences of notations, we remark the following: for each framed oriented link $L\subset S^3$ with $V\geq 2$ components we have
\begin{equation}
\mathrm{ADO}_r(L_\mu)\big(x_1=\xi^{-2}\xi^{2\mu_1},\ldots ,x_V=\xi^{-2}\xi^{2\mu_V}\big)=\N_r(L_\mu)\xi^{-\frac{\mu^T B\mu}{2}+\frac{(r-1)^2 \Tr B}{2}}.
\label{ADO-Nr-link}
\end{equation}
For knots, we remark that $\N_r$ is \textit{not} a polynomial and the relation with ADO polynomial is as follows:
\begin{equation}
\mathrm{ADO}_r(K_\mu)\big(x=\xi^{-2} \xi^{2\mu}\big)=\frac{\xi^{r\mu}-\xi^{-r\mu}}{\xi^{\mu}-\xi^{-\mu}}\N_r(K_\mu)\xi^{-\frac{f\mu^2 }{2}+\frac{f(r-1)^2}{2}},
\label{ADO-Nr-knot}
\end{equation}
where $f$ is the framing of $K$.

A case of special interest is when $r=2$ where, as proved in~\cite{blanchet2016non} $\N_r$ is equivalent to the Alexander--Conway function (see Corollary 6.19, taking into account the difference in the definition of the modified quantum dimension used here). The precise statement is the following, letting $\xi={\rm i}$:
{\samepage\begin{align*}
\begin{split}
 \N_2(L_\mu)&={\rm i}\nabla_L\big({\rm i}\xi^{-\mu_1},{\rm i}\xi^{-\mu_1},\ldots,{\rm i}\xi^{-\mu_V}\big) \xi^{\frac{\mu^TB\mu}{2}-\frac{{\varepsilon}^TB{\varepsilon}}{2}}
 \\
&= (-1)^V{\rm i}\nabla_L\big({-}{\rm i}\xi^{\mu_1},-{\rm i}\xi^{\mu_2},\ldots,-{\rm i}\xi^{\mu_V}\big) \xi^{\frac{\mu^TB\mu}{2}-\frac{{\varepsilon}^TB{\varepsilon}}{2}},
\end{split}
\end{align*}
where $\varepsilon$ is the vector $(1,1,\ldots, 1)$ as in~\eqref{epsilon-vector}.}

\begin{Example}\label{ex:plumbedlink}
Let $L$ be a plumbing link the components of which are colored by $\mu_I$, $I\in \vert$, and let $B$ be the linking matrix. Using equations (Nf) and (Nh) of~\cite{costantino2014quantum} one gets
\begin{equation*}
\N_r(L_\mu)=\xi^{-\frac{\Tr B (r-1)^2}{2}}
\xi^{\frac{1}{2}({\mu}^TB{\mu})}\prod_{I\in \vert} d(\mu_I)^{1-\deg(I)}.
\end{equation*}
In the case $r=2$, so that $\xi={\rm i}$ we get
\begin{equation*}
\N_2(L_\mu)=\xi^{-\frac{\Tr B}{2}}
\xi^{\frac{1}{2}({\mu}^TB{\mu})}\prod_{I\in \vert} \big(\xi^{\mu_I}+\xi^{-\mu_I}\big)^{\deg(I)-1}
\end{equation*}
and
\begin{equation*}
\nabla_L(x_1,\ldots, x_V)=\prod_{I}\big(x_I-x_I^{-1}\big)^{\deg(I)-1}.
\end{equation*}
Indeed, remark that $d(\mu)=\frac{\xi^\mu-\xi^{-\mu}}{\xi^{2\mu}-\xi^{-2\mu}}=(\xi^\mu+\xi^{-\mu})^{-1}$ and replacing $x_I$ by $\xi \xi^{-\mu_I}$ we get
\begin{equation*}
\nabla_L\big(\xi \xi^{-\mu_1},\ldots, \xi \xi^{-\mu_V}\big)={\rm i}^{(-V+\sum_I \deg(I))}\prod_{I}\big(\xi^{\mu_I}+\xi^{-\mu_I}\big)^{\deg(I)-1}
\end{equation*}
and remarking that if $E$ is the number of edges in the plumbing graph then $\sum_I \deg(I)=2E$ and that the Euler characteristic of the plumbing graph is $1$ we get
\begin{equation*}
\nabla_L\big(\xi \xi^{-\mu_1},\ldots, \xi \xi^{-\mu_V}\big)={\rm i}^{(-1+\frac{\sum_I \deg(I))}{2}}\prod_{I}\big(\xi^{\mu_I}+\xi^{-\mu_I}\big)^{\deg(I)-1}.
\end{equation*}
Finally, observe that since $\epsilon$ was defined as the vector of whose components are all $1$, and we are considering a plumbing link, we have $\epsilon^T B\epsilon=\sum_{I\neq J\in \vert} B_{IJ}+\operatorname{tr}(B)=\sum_{I\in \vert} \deg(I)+\operatorname{tr}(B)$.
\end{Example}

\subsubsection[$\N_r$ for manifolds]{$\boldsymbol{\N_r}$ for manifolds}

Let $M$ be an oriented $3$-manifold, presented by integral surgery on a link $L$ and endowed with a cohomology class $\omega\in H^1(M;\mathbb{C}/2\mathbb{Z})$ and assume that for the periods of $\omega$ on the meridians of~$L$ are represented by $\mu_k\in (\mathbb{C}\setminus \mathbb{Z})\cup r\mathbb{Z}$, $k\in \vert$.

\begin{Definition}
\begin{equation}\N_r(M,\omega)= \frac{1}{\Delta_+^{b_+}\Delta_-^{b_-}}
 \sum_{k\in H_r^\vert}
 \prod_{I\in \vert} d(\alpha_{k_I}) \N_r(L_\alpha),
 \label{CGP-surgery-formula}
\end{equation}
where
\begin{equation*}
 \alpha_{k_I}:=\mu_I+k_I, \qquad H_r=\{-(r-1),-(r-3),\ldots,(r-3),r-1\}
\end{equation*}
and $b_\pm$ are the number of positive/negative eigenvalues of $B$.
\end{Definition}

$N_r(M,\omega)$ was proved in~\cite{costantino2014quantum} to be an invariant of pairs $(M,\omega)$ up to positive diffeomorphism.

\begin{Remark}
The invariant $\N_r$ defined above differs by a constant with respect to the invariant~$N'_r$ defined in~\cite{costantino2014quantum} and its renormalisation $Z_r$ introduced in~\cite{blanchet2014non} because of the different definition of $d(\alpha)$ and consequently of $\Delta_\pm$ and of the link invariant.
The general relation for a~closed connected manifold is then:
\begin{equation*}
\N_r(M,\omega)=\frac{\N'_r(M,\omega)}{((-1)^{r-1}r)^{b_1(M)+1}} =\frac{r\sqrt{r'}\big(\frac{r^2}{\sqrt{r'}}\big)^{b_1(M)}Z_r(M,\omega)}{((-1)^{r-1}r)^{b_1(M)+1}},
\end{equation*}
where $r'=r$ if $r$ is odd and $\frac{r}{2}$ else.
Also notice that we recalled the definition when all the periods are non integral but actually the definition can be extended to the case at least one period is non-integral (see~\cite{costantino2014quantum}).
\end{Remark}
In the special case of a plumbed $M$, using the notation introduced in Section~\ref{sec:combinatorial} it reads
\begin{equation}
 \N_r(M,\omega)=
 \frac{1}{\Delta_+^{b_+}\Delta_-^{b_-}}
 \sum_{k\in H_r^\vert}
 \prod_{I\in \vert} d(\alpha_{k_I})^{2-\deg(I)} T(\alpha_{k_I})^{B_{II}}
 \prod_{(I,J)\in \text{Edges}} S(\alpha_{k_I},\alpha_{k_J}).\label{eq:NrM}
\end{equation}

\begin{Example}[$\Sigma\times S^1$]
Let $M=\Sigma_g\times S^1$ for a closed oriented surface $\Sigma_g$ of genus $g$ and let $\omega\in H^1(M;\C/2\Z)\setminus H^1(M;\Z/2\Z)$. If $\beta=\omega\big(\{pt\}\times S^1\big)$,
 then as shown in~\cite[Theorem~5.9]{blanchet2014non} it holds
\begin{equation*}
\N_r(M,\omega)=r^{2g} \sum_{k\in H_r} \bigg(\frac{\{r\beta\}}{\{\beta+k\}}\bigg)^{2g-2}.
\end{equation*}
\end{Example}

\subsubsection[A relation between ${\rm N}_r$ and ${\rm WRT}_r$]{A relation between $\boldsymbol{{\rm N}_r}$ and $\boldsymbol{{\rm WRT}_r}$}

The following result outlines a partial direct relation between the invariants $\N_r$ and $\WRT_r$ (cf.~\cite{beliakova2020nonsemisimple}):
\begin{Theorem}
\label{thm-CGP-knot-surgery-limit}
Let $r\geq 2$ be an integer non divisible by $4$ and $K\subset S^3$ be an oriented zero framed knot and $M$ be the surgery on it. Let $\alpha\in \C$ be a color on $K$ and $\omega_\alpha\in H^1(M;\C/2\Z)$ be the unique cohomology class the value of which on the positive meridian of $K$ is $\alpha\ {\rm mod}\ 2\Z$. Then the following holds:
\begin{alignat*}{3}
&\text{if $r$ is odd:}\quad &&\lim_{\alpha\to 0} [r\alpha]^2\N_r(M,\omega_\alpha)=D^2 \WRT_r(M),
\\
&\text{else:} &&\lim_{\alpha\to 0} [r\alpha]^2\N_r(M,\omega_\alpha) = 2D^2 \WRT_r(M,\omega_0),\quad\text{and}
\\
& && \lim_{\alpha\to 1} [r\alpha]^2\N_r(M,\omega_\alpha) = 2D^2 \WRT_r(M,\omega_1),
\end{alignat*}
where $D=\sqrt{\frac{r}{2}}\big(\sin\big(\frac{\pi}{r}\big)\big)^{-1}$, $\WRT_r(M)$ is the standard WRT invariant of $M$, $\WRT_r(M,\omega_i)$ are the cohomology refined invariants of $M$ and $\omega_i\in H^1(M;\Z/2\Z)$ is the cohomology class on $M$ the value of which on the meridian of $K$ is $i=0,1$.
\end{Theorem}
\begin{proof}
Present $K$ as the closure of a $(1,1)$-tangle $T$ and for each absolutely simple module $V$ (i.e., such that $\End(V)=\C$) over $U^H_q(\mathfrak{sl}_2)$ let $T(V)\in \C$ be the scalar such that $F(T)=T(V)\Id_V$, where $F$ is the Reshetikhin--Turaev functor. In particular, we shall use the $(n+1)$-dimensional highest weight simple module $S_n$ and the absolutely simple $r$-dimensional module $V_\alpha$ with highest weight $\alpha+r-1$.

\textbf{Case $\boldsymbol r$ is odd.}
By definition, for $r$ odd we have
\begin{gather*}
[r\alpha]^2\N_r(M,\omega)=\sum_{k=-(r-1),\ \text{by}\ 2}^{r-1} \frac{\{\alpha+k\}^2}{\{1\}^2} T(V_{\alpha+k}),
\\
\WRT_r(M)=D^{-2}\sum_{j=0}^{r-2} \frac{\{j+1\}^2}{\{1\}^2} T(S_{j}).
\end{gather*}
By the symmetry principle for the colored Jones polynomials of knots (Lemma~\ref{lem:symmetryprinciple}, see~\cite[formula~(4.20)]{kirbymelvin}) and by the equality $\{r-1-j\}=\{j+1\}$, we have also
\begin{equation*}
\WRT_r(M)=2D^{-2}\sum_{j=0\ by\ 2}^{r-3} \frac{\{j+1\}^2}{\{1\}^2} T(S_{j}).
\end{equation*}
As shown in~\cite[Proposition~4]{costantino2015relations},
\begin{equation*}
T(S_{r-1-k})=T(V_{k})=T(V_{-k})\qquad \forall k\in \{0,\ldots ,r-1\},
\end{equation*}
so, since by~\cite[Corollary~15]{costantino2015relations} $T(V_\alpha)$ is a continuous function of $\alpha$, we have
\begin{align*}
\lim_{\alpha\to 0} [r\alpha]^2\N_r(M,\omega)&= \sum_{k=-(r-1),\ \text{by}\ 2}^{r-1} \frac{\{k\}^2}{\{1\}^2} T(V_{k})= 2\sum_{k=2,\ by\ 2}^{r-1} \frac{\{k\}^2}{\{1\}^2} T(V_{k})
\\
&=2\sum_{k=2,\ \text{by}\ 2}^{r-1} \frac{\{k\}^2}{\{1\}^2} T(S_{r-1-k})=2\sum_{j=0,\ \text{by}\ 2}^{r-3} \frac{\{r-1-j\}}{\{1\}} T(S_{j})
\\
&=2\sum_{j=0, \ \text{by}\ 2}^{r-3} \frac{\{j+1\}^2}{\{1\}^2} T(S_{j}).
\end{align*}

\textbf{Case $\boldsymbol r$ is even.}
By definition, for $r$ even we have
\begin{gather*}
[r\alpha]^2\N_r(M,\omega_{\alpha})=\sum_{k=-(r-1),\ \text{by}\ 2}^{r-1} \frac{\{\alpha+k\}^2}{\{1\}^2} T(V_{\alpha+k}),
\\
\WRT_r(M,\omega_i)=D^{-2}\sum_{j=i,\ \text{by}\ 2}^{r-2+i} \frac{\{j+1\}^2}{\{1\}^2} T(S_{j}).
\end{gather*}
So
\begin{align*}
\lim_{\alpha\to 0}[r\alpha]^2\N_r(M,\omega_{\alpha})&=\sum_{k=1,\ \text{by}\ 2}^{r-1} 2\frac{\{k\}^2}{\{1\}^2} T(V_{k})=\sum_{k=1\ by\ 2}^{r-1} 2\frac{\{k\}^2}{\{1\}^2} T(S_{r-1-k})
\\
&=\sum_{j=0,\ \text{by}\ 2}^{r-2} 2\frac{\{r-1-j\}^2}{\{1\}^2} T(S_{j})=\sum_{j=0,\ \text{by}\ 2}^{r-2} 2\frac{\{1+j\}^2}{\{1\}^2} T(S_{j})
\\
&=2D^2 \WRT_r(M,\omega_0).
\end{align*}
Similarly,
\begin{align*}
\lim_{\alpha\to 1}[r\alpha]^2\N_r(M,\omega_{\alpha})&=\sum_{k=-(r-1),\ \text{by}\ 2}^{r-1} 2\frac{\{k+1\}^2}{\{1\}^2} T(V_{k+1})=\sum_{s=2,\ \text{by}\ 2}^{r-2} 2\frac{\{s\}^2}{\{1\}^2} T(V_{s})
\\
&=\sum_{s=2,\ \text{by}\ 2}^{r-2} 2\frac{\{s\}^2}{\{1\}^2} T(S_{r-1-s})=\sum_{j=1,\ \text{by}\ 2}^{r-3} 2\frac{\{r-1-j\}^2}{\{1\}^2} T(S_{j})
\\
&=2D^2 \WRT_r(M,\omega_1)
\end{align*}
where in the second equality we used that $\{r\}=\{0\}=0$.
\end{proof}

\subsection[$\hat{Z}$ invariants]{$\boldsymbol{\hat{Z}}$ invariants}\label{sec:GPPV}
In this section, we recall some general facts about the invariants $\hat{Z}_\mathfrak{s}$ of 3-manifolds and their version $F_L (x,q) := \hat Z \big(S^3 \setminus L\big)$ for links. We formulate some conjectures about the link invariant and outline how the $3$-manifold invariant is built out of the invariant for links. We postpone examples to a later section.

Before diving into a more detailed discussion of the $q$-series invariants $\hat{Z}_\mathfrak{s} (M;q)$, we start with a few general comments regarding mathematical definition(s) of these invariants and compare them to other invariants of a similar nature. One important feature that we should stress is that many such invariants and the corresponding TQFTs require a regularization because~-- manifestly or in disguise~-- their space of states $\mathcal{H} (\Sigma)$ on a general 2-manifold $\Sigma$ is infinite-dimensional. This includes Vafa--Witten invariants~\cite{Vafa:1994tf}, the $q$-series invariants $\hat{Z}_\mathfrak{s} (M;q)$, the Teichm\"uller TQFT~\cite{MR3204520}, and even BCGP theory of our interest here.

So currently the value of the invariants is known only for some (infinite) family of surgery presentations of some family of $3$-manifolds.
In a smaller set of cases, it can be shown that these values are invariants of the $3$-manifolds: it is the case of surgeries over plumbing links where the invariance was proved in~\cite[Proposition~4.6]{Gukov:2019mnk} by showing that two different surgery presentations via negative plumbing links yield the same invariant.

Therefore, a mathematical definition that applies to arbitrary manifolds is not available and this can be traced to a need of a suitable regularization. For example, in the case of the Teichm\"uller TQFT, the space of states $\mathcal{H} (\Sigma)$ is expected to be a quantization of a particular component (namely, the Teichm\"uller component) in the space of real ${\rm SL}(2,\mathbb{R})$ flat connections on~$\Sigma$. Note, for this reason, the Teichm\"uller TQFT at best can be described as a ``real Chern--Simons theory,'' unlike $\hat{Z}_\mathfrak{s} (M;q)$, which provides a non-perturbative definition to complex Chern--Simons theory. Indeed, even at the perturbative level, only certain ${\rm SL}(2,\mathbb{R})$ flat connections contribute to the Teichm\"uller TQFT, whereas $\hat{Z}_\mathfrak{s} (M;q)$ generically includes contributions of arbitrary ${\rm SL}(2,\mathbb{C})$ flat connections on $M$. We will return to the TQFT aspects and discuss $\mathcal{H} (\Sigma)$ in more detail
in Section~\ref{sec:TQFT}.

Compared to the above-mentioned similar invariants, the $q$-series $\hat{Z}_\mathfrak{s} (M;q)$ are relatively young, introduced only a few years ago. Nevertheless, they are quickly developing, in part thanks to several complementary approaches that we briefly summarize here:
\begin{itemize}
\item First, we briefly mention a \textit{physics approach} based on 3d-3d correspondence because, at least in principle, it allows to turn the problem of computing $\hat{Z}_\mathfrak{s} (M;q)$ into a concrete computation in a certain QFT associated with $M$. Aside from serving as a bridge between math and physics, this sometimes is capable of producing concrete expressions for $\hat{Z}_\mathfrak{s} (M;q)$ in certain infinite families of 3-manifolds~\cite{Gukov:2017kmk}. A notable example is the expression~\eqref{Zhat-b1-def} for plumbed manifolds that came out of this approach and will be extensively used in this paper.

\item One of the first mathematical approaches to the $q$-series $\hat{Z}_\mathfrak{s} (M;q)$ is based on \textit{quantum groups at generic} $\mathbf{q}$, following the Reshetikhin--Turaev construction at roots of unity. In~this approach, one first needs to construct invariants for links and then, via a surgery formula, produce invariants for closed 3-manifolds. Among all approaches listed here, this one is by far the most user-friendly and efficient as it comes to computations. In particular, it produces invariants for many infinite families of knots, such as all torus knots, positive braid knots, homogeneous braid links, and many other examples, including e.g., all fibered knots up to 10 crossings~\cite{Park:2020edg,Park2021}. The computations in all these examples have been shown to be invariant under Reidemeister moves, but the challenge in this approach comes at the level of 3-manifolds: while the invariants of closed 3-manifodls are very easy to compute in each case, their invariance under Kirby moves has not been demonstrated in general. Moreover, the surgery formula which is used, in order to be well defined, requires the link framings to satisfy certain inequalities. Needless to say, this is one of the good problems for future research.

\item A very different approach to the mathematical formulation of the $q$-series $\hat{Z}_\mathfrak{s} (M;q)$ is based on the \textit{geometry of affine Grassmannians}. Namely, it was proposed in~\cite{Gukov:2020lqm} that $\hat{Z}_\mathfrak{s} (M;q)$ can be defined as a Rozansky--Witten TQFT with the target space given by the space of BFN triples, a particular model for the total space of the cotangent bundle of the affine Grassmannian Gr$_G$. Since Rozansky--Witten theory admits a rigorous mathematical definition~\cite{MR1671737,MR1671725}, this formulation is mathematically rigorous and applies to all closed 3-manifolds, but in practice is rather hard to compute.\footnote{Another, related challenge in this approach is that the variable $q$ appears as a formal parameter (associated with the loop rotation) and, therefore, produces only a formal power series in $q$. This definition would not be sufficient e.g., for the purposes of the present paper since it does not allow evaluating $\hat{Z}_\mathfrak{s} (M;q)$ at generic points in $|q|<1$ that we need for approaching the roots of unity and studying the connection with the BCGP invariants.\looseness=1}

\item There are candidates for several other, even more computationally-challenging, ap\-proach\-es~\cite{Gukov:2017kmk} based on analysis and moduli spaces in \textit{gauge theory}~\cite{Witten:2011zz} and in \textit{enumerative geometry}~\cite{Ekholm:2020lqy}. Potentially, they can provide alternative mathematical definitions of the $q$-series $\hat{Z}_\mathfrak{s} (M;q)$ and produce important connections to other branches of mathematics.

\item Yet another approach, based on \textit{resurgent analysis}, is in many ways the ``middle of the road.'' It is computationally not as efficient as the approach based on quantum groups at generic~$q$. On the other hand, even though this approach is also analytic in nature, it is much easier than the analysis involved in studying the moduli spaces of gauge theory PDEs or curve counting. In particular, using this approach the expressions for $\hat{Z}_\mathfrak{s} (M;q)$ computed via other methods listed here were reproduced in large families of 3-manifolds, see, e.g.,~\cite{MR4400935,Chung:2020efy,Gukov:2016njj}.
\end{itemize}

Following this brief summary, it may be worth emphasizing the distinction between ability to demonstrate invariance under Kirby moves in complete generality versus providing a certificate in each individual case. This is similar to a Sudoku puzzle where, despite the lack of a general fast algorithm, verifying correctness of solutions can be done quickly (in polynomial time). For example, as mentioned in this summary, the approach based on quantum groups at generic~$q$ provides a quick and efficient way to compute $\hat{Z}_\mathfrak{s} (M;q)$ based on a surgery presentation. Therefore, using this approach one can easily {\it check} Kirby moves in lots of examples, one-by-one, even before the general proof is available.

Therefore, in the above summary we find one approach (based on quantum groups at gener\-ic~$q$) which is easy to compute, but which is not defined in general, and another approach (based on the geometry of affine Grassmannians) which is well defined but is very hard to compute. An obvious conjecture is that these two approaches define the same invariants $\hat{Z}_\mathfrak{s} (M;q)$. In~other words, it is expected that the two approaches~-- one, which is hard to compute and the other which is hard to define~-- are equivalent. Due to the difficulty of computing $\hat{Z}_\mathfrak{s} (M;q)$ via Rozansky--Witten theory, the validity of this conjecture has been established only for very simple 3-manifolds, such as Lens spaces and $M = S^1 \times S^2$.

\subsubsection[$\hat{Z}$ for links]{$\boldsymbol{\hat{Z}}$ for links}

A physical construction of new $q$-series invariants of 3-manifolds was proposed in~\cite{Gukov:2017kmk,Gukov:2016gkn}. Much like invariants $\N_r (M,\omega)$ reviewed above, these $q$-series invariants are labeled by extra data, which originally was interpreted as the choice of abelian flat connection or, equivalently,\footnote{In this paper we only consider $\operatorname{SU}(2)$ version of these invariants. The higher-rank version is also available, see, e.g.,~\cite{Chung:2018rea,Park:2019xey}.} an element of $H_1 (M)$.
Soon, it was realized~\cite{Gukov:2020lqm,Gukov:2019mnk} that the extra data $\mathfrak{s}$ which labels $\hat{Z}_\mathfrak{s}(M)$ should be understood as a spin$^c$ structure on $M$. Recall, that as a set $H_1 (M)$ is isomorphic to Spin$^c (M)$, but the isomorphism is non-canonical. The difference between the two becomes apparent in cutting and gluing operations, which in part explains why it was not noticed until the invariants were extended to 3-manifolds with toral boundaries and link complements, where~$\mathfrak{s}$ is a relative spin$^c$-structure. In particular, if $M$ is the complement of a knot $K$ in rational homology sphere, the invariant is expected to be read off from a single two-variable series:\footnote{The notation $R[q^{-1},q]]$ means the ring of formal Laurent power series in $q$, with coefficients in $R$, where one can have infinitely many terms with positive powers of $q$ but only finitely many terms with negative powers of $q$.}
\begin{equation*}
F_K(x,q)\in 2^{-c} q^{\Delta} \Z\big[q^{-1},q\big]\big]\big[\big[x^{\pm\frac{1}{2}}\big]\big]
\end{equation*}
for some $c\in \Z_+$ and $\Delta\in \Q$. Apart from the original physics formulation, several approaches toward rigorous constructions of this invariant have been developed. For example, the approach based on recursion and resurgence, although in principle can be applied to any knot (or link) is rather laborious and was used in~\cite{Gukov:2019mnk} to produce $F_K (x,q) := \hat Z \big(S^3 \setminus K\big)$ only for torus knots and a single hyperbolic knot, the figure-eight knot $K = {\bf 4_1}$.

Another, much more efficient diagrammatic approach based on the $R$-matrix for Verma modules and quantum groups at generic $q$ was proposed by Park~\cite{Park:2020edg,Park2021}. Using this approach, one can quickly compute $F_K(x,q)$ for many hyperbolic knots up to roughly 10 crossings and also for many infinite families. Specifically, the $R$-matrix used by Park~\cite{Park:2020edg} is the universal $R$-matrix~\cite{MR1026957} applied to the lowest weight Verma modules with complex weights (determined by complex variables $x$ and $y$ in this expression):
\begin{equation}
R^{ij}_{kl} = \bigl[\!\begin{smallmatrix} j \\ n \end{smallmatrix}\!\bigr] \prod_{l=1}^n \big(1 - y^{-1} q^{i+l}\big) x^{-\frac{\rm i}{2}-\frac{n}{4}} y^{-\frac{j-n}{2}+\frac{n}{4}} q^{(j-n)i + \frac{(j-n)n}{2} + \frac{j-n+i+1}{2}},
\label{ZhatR}
\end{equation}
where $n = l-i = j-k$, $x$ and $y$ are the exponentiated values of the two complex weights (i.e., the variable $y$ has the same meaning as the variable $x$ that we already encountered in the discussion of $F_K (x,q)$), and
\begin{equation}
\bigl[\!\begin{smallmatrix} j \\ n \end{smallmatrix}\!\bigr] = \frac{[j]!}{[n]! [j-n]!} = \frac{ \{ 1 \} \cdots \{ j \} }{ \{ 1 \} \cdots \{ n \} \{ 1 \} \cdots \{ j-n \} } = \frac{ \{ j - n + 1 \} \cdots \{ j \} }{\{ 1 \} \cdots \{ n \}} = \frac{ \{ j;n \} }{ \{ n;n \} }
\label{qfactident}
\end{equation}
One of the main difficulties in Park's constructions is to deal with closures of braids and making sense of the resulting infinite sums. It is this delicate aspect that leads to some of the above mentioned conditions on knots and links currently covered by this construction.

Here, and in relating it to the $R$-matrix~\eqref{ADOR} we are a little cavalier with the overall powers of $q$. Indeed, the fact that~\eqref{ADOR} and~\eqref{ZhatR} are related by sending $q$ to a root of unity is the first indication for the relation~\eqref{FKADOnorm} between the corresponding knot invariants. This also clarifies the quantum group origin of the $\hat Z$-invariants. Since they basically provide a non-perturbative formulation\footnote{The all-order perturbative formulation was known for quite some time, see, e.g.,~\cite{Dimofte:2009yn} and references therein.} of the ${\rm SL}(2,\C)$ Chern--Simons TQFT~-- where $q$ is a continuous complex variable and so are the highest weights of representations by which Wilson lines are colored~-- it was expected for a long time that ${\rm SL}(2,\C)$ Chern--Simons theory should be described by $U_q ({\mathfrak g})$ with generic $q$.

In order to relate the $R$-matrices~\eqref{ADOR} and~\eqref{ZhatR}, we first need to replace $q \to q^{-2}$ and then take the root of unity limit. As in the generalized volume conjecture~\cite{Gukov:2003na}, all combinations of the form $q^{\text{weight}}$ must be treated as independent variables and kept fixed. In particular, we need to use the identifications $x = q^{\mu}$ and $y = q^{\lambda}$ and treat them as independent complex parameters. Then, using identities of the form~\eqref{qfactident} we arrive at the desired relation between~\eqref{ZhatR} and~\eqref{ADOR}. Based on this relation between the $R$-matrices, it is not surprising to expect the following conjectural relation between the corresponding knot invariants~\cite{Chae:2020ldd,Gukov:2020lqm} (cf.~Conjecture~\ref{conj:park}):
\begin{equation*}
\lim_{q\to \exp(2{\rm i}\pi/r)}F_K(x_1,q)\Delta_K(x_1^r) =\mathrm{ADO}_r(K)\bigg(\frac{x_1}{q}\bigg)\Big|_{q=\exp{\frac{2\pi {\rm i} }{r}}},
\end{equation*}
where $\Delta_K$ is the Alexander polynomial of $K$. The $r=1$ case of the conjecture holds automatically if one assumes the definition of $F_K$ for a class of knots in~\cite{Park2021}. For some other values of $r$, it has been previously verified for certain knots.

To properly deal with the case of links, we consider the set
\begin{equation*}
\C\big[q^{-1},q\big]\big]\big[\big[x_1^{\pm\frac{1}{2}},\ldots, x_V^{\pm\frac{1}{2}}\big]\big]
\end{equation*}
of formal power series in $\big\{x_I^{\pm \frac{1}{2}}\big\}$ with coefficients in Laurent series in $q$.
Such a set is not a ring but is a module over the ring:
\begin{equation*}
\C\big[q^{-1},q\big]\big]\big[x_1^{\pm\frac{1}{2}},\ldots, x_V^{\pm\frac{1}{2}}\big]
\end{equation*}
of Laurent polynomials in $\big\{x_I^{\pm \frac{1}{2}}\big\}$ with coefficients in Laurent series in $q$.

Taking into account the different normalisation between $\mathrm{ADO}_r(L)$ and $\N_r(L)$, we propose the following conjecture for the link case:

\begin{conj}\label{conj:FADO2}\quad
\begin{enumerate}\itemsep=0pt
\item[$(a)$] For each framed link $L$ in $S^3$ there exists a non-zero formal power series
\[
W_L\in \Z\big[q^{-1},q\big]\big]\big[\big[x_1^{\pm \frac{1}{2}},\ldots,x_V^{\pm \frac{1}{2}}\big]\big]
\]
such that the following holds for every $r\geq 2$ and for every $\vec{\alpha}\in \mathbb{R}^n$:
\begin{equation}
\lim_{t\to 1} \lim_{q\to \exp(\frac{2\pi {\rm i}}{r})} \frac{W^t_L(q^{\alpha_1},\dots, q^{\alpha_V},q)}{W^t_L(q^{r\alpha_1},\dots, q^{r\alpha_V},q^r)}=\N_r(L_{\alpha})q^{{-\frac{1}{4}}({\alpha}^TB\alpha-(r-1)^2 \Tr B)}, \label{CGP-Zhat-knot}
\end{equation}
where $B$ is the linking matrix of $L$ and $W^t_L$ is the formal power series in $t$ obtained by replacing each $x_I^{n/2}$, $n>0$ by $t^nq^{\alpha_I/2}$ and each $x_I^{-n/2}$ by $t^nq^{-\alpha_I/2}$.
In general, the radius of convergence of $W^t_L$ might be less than one but we conjecture that $\lim_{q\to \exp(\frac{2\pi {\rm i}}{r})}W^t_L$ is actually a rational function of $t$ so that the limit $t\to 1$ makes sense via analytic continuation.

\item[$(b)$] When $F_L\equiv \hat{Z}\big(S^3\setminus L\big)$ is defined, one can take $W_L=F_L$.
\end{enumerate}
\end{conj}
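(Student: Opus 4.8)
The plan is to build $F_L$ from Park's diagrammatic construction and then to show that its root-of-unity specialization, once normalized by the indicated denominator, reproduces the Reshetikhin--Turaev computation of $\N_r(S^3,L_\alpha)$. First I would define $F_L$ as the state sum obtained by applying the universal $R$-matrix \eqref{ZhatR} on lowest-weight Verma modules to a diagram of $L$ and closing up every component; because the Verma modules are infinite-dimensional this produces a genuine non-zero formal power series in the $x_I$ with coefficients in $\Z[q^{-1},q]]$, which settles the existence part of the statement. The $t$-regularization $F^t_L$ is introduced precisely to record the total $x$-degree, so that the two nested limits in \eqref{CGP-Zhat-knot} can be taken in a controlled order rather than simultaneously.

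The core of the argument is the observation already isolated in Section \ref{sec:GPPV}: after $q\to q^{-2}$ and passage to $q=\exp(2\pi i/r)$, the $R$-matrix \eqref{ZhatR} degenerates exactly into the ADO/Murakami $R$-matrix \eqref{ADOR} governing the CGP functor on the $r$-dimensional modules $V_\alpha$. I would therefore match the two state sums term by term: at the root of unity the numerator $F^t_L(q^{\alpha},q)$ collapses onto the Reshetikhin--Turaev evaluation of the $V_\alpha$-colored diagram, that is, onto the scalar $\langle T\rangle$ entering the definition of $\N_r(S^3,L_\alpha)$. The denominator is the $q^r=1$ specialization $F^t_L(e^{2\pi i\alpha},1)$, which computes the (inverse) Alexander--Conway normalization of $L$ --- this is the multivariable link analog of the factor $\Delta_K(x^r)$ appearing in Conjecture \ref{conj:park}, and it supplies exactly the modified-dimension and linking-form factors that convert $ADO_r$ into $\N_r$, as recorded in \eqref{ADOvsNK} and in the normalization relating $ADO_r(L)$ to $\N_r(S^3,L)$. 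The explicit prefactor $q^{-\frac{1}{2}(\alpha^TB\alpha-(r-1)^2\Tr B)}$ then absorbs the residual framing and convention discrepancy between the Park and CGP normalizations.

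For the knot case $V=1$ the identity reduces to the composition of \eqref{conj:FADO} with \eqref{ADOvsNK}, so the genuinely new content is the multicomponent generalization together with the analytic assertion that $\lim_{q\to\exp(2\pi i/r)}F^t_L$ is a rational function of $t$. This last point is where I expect the main obstacle to lie: since the Verma modules do not truncate, the individual coefficients of the $x$-power series do not stabilize under the root-of-unity limit, and one must show that summing them at a fixed root of unity yields a rational function in $t$ rather than a divergent series. I would attack this by grading the state-sum contributions according to the weight flowing through the cut component and resumming each weight-graded piece into a geometric-type series in $t$; establishing uniform control of these resummations --- equivalently, proving the link analog of Conjecture \ref{conj:park} that guarantees the limit exists and is rational so that the $t\to 1$ analytic continuation is legitimate --- is the crux of the whole statement and the step least likely to follow from formal manipulation alone.
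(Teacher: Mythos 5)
The statement you are trying to prove is stated in the paper as Conjecture \ref{conj:FADO2}, and the paper offers no general proof of it: it is taken as a hypothesis in Theorems \ref{thm:CGP-Zhat} and \ref{thm:CGP-Zhat-torsion}, verified by hand only for plumbing links (Subsection \ref{sub:goodlinks}, where $F_L(\{x_I^2\}_I,q)=\prod_I(x_I-x_I^{-1})^{1-\deg(I)}$ is explicit and the $t$-regularized limit is computed directly as a rational function), and for knots the paper defers to Willets' result, which moreover lands in a different completion. So there is no ``paper's own proof'' to match your argument against; any complete argument you give would be new content, and yours is not complete.

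The gaps are the ones you partly name yourself, but they are fatal rather than merely ``the hardest step.'' First, your definition of $F_L$ by ``closing up every component'' of a Verma-colored diagram is not obviously well-defined: the Verma modules are infinite-dimensional, so the quantum trace over each closed component is an infinite sum, and for links with $V\geq 2$ components there is no analogue of the $(1,1)$-tangle trick that makes the knot case work; establishing that the state sum lives in $\Z[q^{-1},q]][[x_1,\ldots,x_V]]$ is already an open problem for general links. Second, the ``term by term'' matching of the Verma state sum at $q=e^{2\pi i/r}$ with the CGP evaluation on the $r$-dimensional modules $V_\alpha$ is exactly the link version of Conjecture \ref{conj:park}, which is itself unproven; the degeneration of the $R$-matrix \eqref{ZhatR} into \eqref{ADOR} is suggestive at the level of individual matrix entries but does not by itself control the infinite sums over weights in the closed-component traces. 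Third, the rationality in $t$ of $\lim_{q\to e^{2\pi i/r}}F_L^t$, which you correctly identify as the crux, is addressed only by a one-sentence plan (``resumming each weight-graded piece into a geometric-type series''); in the paper this is exactly the kind of statement that is only checked in examples (e.g.\ the $Y$-shaped plumbings of Appendix \ref{app:commutlimits}, using Proposition \ref{prop:limits-lemma} on periodic mean-zero coefficients), and no mechanism is known that would make it work for an arbitrary framed link. Your proposal is a reasonable road map for why the conjecture is plausible, but it does not prove it, and you should present it as such.
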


The relation of Conjecture~\ref{conj:FADO2} with the previous ones is clarified by the following:

\begin{conj}\label{conj:FADO3}\quad
\begin{enumerate}\itemsep=0pt
\item[$(a)$] If $L\subset S^3$ is a framed oriented link multivariable Alexander polynomial $\nabla_L$ of which is non-zero then one can choose $W_L(x,q)$ in Conjecture~$\ref{conj:FADO2}$ such that the following holds:
\begin{equation*}
\lim_{t\to 1}\lim_{q\to 1} W^t_L\big(q^{\alpha_1},\ldots ,q^{\alpha_V},q\big)\nabla_L\big(q^{\alpha_1},\ldots ,q^{\alpha_V}\big)=1.
\end{equation*}

\item[$(b)$] When $F_L(x,q)$ is defined it satisfies the above property.
\end{enumerate}
\end{conj}

Indeed, in the case of a knot $K$ endowed with zero framing and colored by $\alpha$, Conjecture~\ref{conj:FADO2} becomes:
\begin{equation*}
\lim_{t\to 1}\lim_{q\to \exp(\frac{2\pi {\rm i}}{r})}\frac{F^t_K(q^{\alpha},q)}{F^t_K(q^{r\alpha};q^r)}=\N_r(K_\alpha)
\end{equation*}
so that using Conjecture~\ref{conj:FADO3} we get
\begin{equation*}
\lim_{t\to 1} \lim_{q\to \exp(\frac{2\pi {\rm i}}{r})} F^t_K(q^\alpha,q)\nabla_K.
\end{equation*}
There are, however, links with vanishing Alexander polynomial, e.g.,
\[
{\bf 9_{27}},\ {\bf 10_{32}},\ {\bf 10_{36}},\ {\bf 10_{107}},\ {\bf 11_{244}},\ {\bf 11_{247}},\
{\bf 11_{334}},\ {\bf 11_{381}},\ {\bf 11_{396}},\ {\bf 11_{404}},\ {\bf 11_{406}},\ \ldots\,.
\]
We then set the following definition:

\begin{Definition}[good links]
A link $L$ is very good if Conjectures~\ref{conj:FADO2} and~\ref{conj:FADO3} are verified for $L$ and is good if only Conjecture~\ref{conj:FADO2} is satisfied for it.
\end{Definition}

We remark that the links with zero multivariable Alexander polynomial cannot be very good. Also, Conjecture~\ref{conj:FADO2} does not completely fix $F_L(x,q)$. In particular, one can always multiply it by a function in $q$ that has the same radial limit when $q\rightarrow {\rm e}^{\frac{2\pi {\rm i}}{\ell}}$ for all positive integral $\ell$ (including $\ell=1$).

When $L$ is a knot, a version of Conjecture~\ref{conj:FADO2} was proven by S.~Willets~\cite{willets} in which $F_L$ belongs to a suitable ring obtained as completion of the Laurent polynomials in two variables.

\subsubsection{Examples of good links}\label{sub:goodlinks}
Let $L\subset S^3$ be a plumbing link, with the notation introduced in Example~\ref{ex:plumbedlink}.
In~\cite{Gukov:2020frk}, the following formula was provided for $F_L$:
\begin{equation*}
F_L \big(\big\{x_I^2\big\}_I,q\big) =\prod_I \big(x_I-x_I^{-1}\big)^{1-\deg(I)}.
\end{equation*}
If we use this formula for $F_L$, then it is clear that
\begin{equation*}
\lim_{q\to \exp(2{\rm i}\pi/r)} \frac{F_L\big(\big\{x_I^2\big\}_I,q\big)}{F_L\big(\big\{x_I^{2r}\big\}_I,q^r\big)}=\prod_I \frac{\big(x_I-x_I^{-1}\big)^{1-\deg(I)}}{\big(x_I^r-x_I^{-r}\big)^{1-\deg(I)}}
=\xi^{-\frac{{\alpha}^TB{\alpha}}{2}+\frac{(r-1)^2}{2}\Tr(B)} \N_r(L_\alpha),
\end{equation*}
where we used Example~\ref{ex:plumbedlink} and we set $x_I=\xi^{\alpha_I}$, so that Conjecture~\ref{conj:FADO2} holds for these links.

In general, $F_L$ is to be considered as a formal power series and in the above case we do it as follows. Let $t$ be a regularization parameter and let $F^t_L$ be the power series development in $t$ of
\begin{align*}
F^t_L \big(\big\{x_I^2\big\}_I,q\big) = {}&\prod_{I\colon \deg(I)>1} \frac{1}{2}\big(\big(x_I/t-tx_I^{-1}\big)^{1-\deg(I)}+\big(tx_I-x_I^{-1}/t\big)^{1-\deg(I)}\big)
\\
&{}\times \prod_{I\colon \deg(I)=0} t\big(x_I-x_I^{-1}\big).
\end{align*}
For the factors with $\deg(I)>1$, we used the fact that the expansion of $(x_I-1/x_I)^{1-\deg{I}}$ at $x_I=0$ (respectively, $x_I=\infty$) contains only positive (respectively, negative) powers of $x_I$. Since~$F^t_L$ is a rational function in $t$ the limit $F_L=\lim_{t\to 1} F^t_L$ exists even though the power series $F^t_L$ might have radius of convergence in $t$ less than $1$. So that Conjecture~\ref{conj:FADO2} holds.

Similarly the Alexander--Conway, polynomial of $L$ can be computed directly in these cases as $\nabla_L=F_L^{-1}$ (see Example~\ref{ex:plumbedlink}) so that also Conjecture~\ref{conj:FADO3} holds.
The plumbing links are then very good links.

\subsubsection{Whitehead link}

Starting with the expression for $F_L (x_1,x_2,q)$ obtained by the $R$-matrix technique~\cite{Park:2020edg,Park2021}:
\begin{equation*}
F_L (x_1,x_2,q) =
\sum_{n \ge 0} \frac{(-1)^n q^{- \frac{n(n+1)}{2}} \big(q^{n+1}\big)_n \big(x_1^{1/2} - x_1^{-1/2}\big) \big(x_2^{1/2} - x_2^{-1/2}\big) }{\prod_{j=0}^n \big(x_1 + x_1^{-1} - q^j - q^{-j}\big) \big(x_2 + x_2^{-1} - q^j - q^{-j}\big)},
\end{equation*}
where $(a)_n:=\prod_{i=0}^{n-1}(1-aq^i)$, we get
\begin{equation*}
F_L (x_1,x_2,q) \big|_{q \to -1} =
\frac{1}{\big(x_1^{1/2} - x_1^{-1/2}\big) \big(x_2^{1/2} - x_2^{-1/2}\big)}.
\end{equation*}
According to the identification of the parameters in~\cite{Gukov:2020lqm}, we should compare this with $\operatorname{ADO}_p$ with $p=2$, evaluated at $x {\rm e}^{- 2 \pi {\rm i}/p}$. In the case of knots, $\operatorname{ADO}_p$ is also multiplied by $\frac{x^{1/2} - x^{-1/2}}{\Delta (x^p)}$.

Murakami~\cite{Murakami} works in conventions such that $q={\rm e}^{\pi {\rm i} /p}$ is the $2p$-th root of unity and his $\operatorname{ADO}_p$ with $p=2$ gives the Alexander polynomial
\begin{gather*}
\operatorname{ADO}_1 = 1,\qquad
\operatorname{ADO}_2 = \big(z_1-z_1^{-1}\big)\big(z_2-z_2^{-1}\big),
\end{gather*}
where we used $z_1 = q^{\lambda}$ and $z_2 = q^{\mu}$ in Murakami's notations~\cite{Murakami}. Comparing this with the multivariable Alexander polynomial
\begin{equation*}
\Delta = \big(x_1^{1/2}-x_1^{-1/2}\big)\big(x_2^{1/2}-x_2^{-1/2}\big),
\end{equation*}
we see that Murakami's $z_i = x_i^{1/2}$ and the relation between $F_L (\vec x,q)$ and $\operatorname{ADO}_p (\vec x)$ for links must be more complicated; it should convert powers of $x_i^{1/2} - x_i^{1/2}$ in the denominator to the powers of $x_i^{1/2} - x_i^{1/2}$ in the numerator.

Note, if as in case of knots we evaluate the ADO invariant at $x {\rm e}^{- 2 \pi {\rm i}/p}$, we would have $\big(x_1^{1/2}+x_1^{-1/2}\big)\big(x_2^{1/2}+x_2^{-1/2}\big)$ in the numerator. Then, dividing it by $\Delta (x_i^2)$ we would get $\big(x_1^{1/2}-x_1^{-1/2}\big)\big(x_2^{1/2}-x_2^{-1/2}\big)$ in the denominator.
For $p=3$, we get
\begin{align}
&F_L (x_1,x_2,q) \big|_{q \to {\rm e}^{2\pi {\rm i} /3}} \nonumber
\\
&\qquad{}=
\frac{x_1^2 x_2^2+x_1^2 x_2+x_1^2+x_1 x_2^2+x_1 x_2+{\rm i}\sqrt{3} x_1 x_2+x_1+x_2^2+x_2+1}{\big(x_1^{1/2} - x_1^{-1/2}\big) \big(x_2^{1/2} - x_2^{-1/2}\big)\big(1 + x_1 + x_1^2\big) \big(1 + x_2 + x_2^2\big)}.
\label{WhiteheadFp3}
\end{align}
Apart from the familiar $\Delta (x_i)$, in the denominator we have a factor of
\begin{equation*}
\big(x_1 + 1 + x_1^{-1}\big) \big(x_2 + 1 + x_2^{-1}\big) = \frac{\Delta (x_i^p)}{\prod_i \big(x_i^{1/2} - x_i^{-1/2}\big)},
\end{equation*}
so that the entire denominator is basically $\Delta (x_i^p)$.

On the other hand, the numerator of~\eqref{WhiteheadFp3} is precisely Murakami's ADO$_3$ evaluated at $x_i {\rm e}^{- 2 \pi {\rm i}/p}$, just as in the case of knots.
Therefore, we can write this relation as
\begin{equation}
F_L (x_1,x_2,q) \big|_{q \to {\rm e}^{2\pi {\rm i} /p}} =
\frac{\operatorname{ADO}_p (x_i {\rm e}^{- 2 \pi {\rm i}/p})}{\Delta (x_i^p)}.
\label{FLADOreln}
\end{equation}
For reasons mentioned above, this also checks out in the $p=2$ case.
For $p=4$ we get
\begin{align*}
F_L (x_1,x_2,q) \big|_{q \to {\rm e}^{2\pi {\rm i} /4}} =
\frac{x_1^{-3/2} x_2^{-3/2} (x_1+1) (x_2+1) \big(x_1^2 \big(x_2^2+1\big)+2 i x_1 x_2+x_2^2+1\big)}{\Delta \big(x_i^4\big)},
\end{align*}
again, in perfect agreement with~\eqref{FLADOreln}.

As a side remark, we also note that, in general, ADO polynomials have many coefficients that are algebraic numbers; the coefficients of $\operatorname{ADO}_p (x_i {\rm e}^{- 2 \pi {\rm i}/p})$ are also algebraic integers, but typically much simpler.

\subsection[$\hat{Z}$ for 3-manifolds]{$\boldsymbol{\hat{Z}}$ for 3-manifolds}\label{sec:Zhat-def-b1}

Let $M=S^3(L)$ where $L$ is a link with linking matrix $B$ and the set of components $\vert$. Let
\begin{equation}
F(x,q) :=
F_L\big(x^2,q\big) \prod_{I\in \vert}\big(x_I-x^{-1}_I\big)
= \sum_{\ell \in \Z^\vert}F_{\ell}\prod_{I\in \vert} x_I^{\ell_I}
\label{FL-expansion}
\end{equation}
be a somewhat differently normalized $\hat{Z}$-invariant of the link $L$ or, more precisely, the link complement $S^3 \setminus L$. For example, $F(x,q)=F_K\big(x^2,q\big)\big(x-x^{-1}\big)$ for a knot (complement), and $F(x,q)=\prod_{I\in\vert}\big(x_I-x_I^{-1}\big)^{2-\deg(I)}$ for a plumbing link (complement). Note, the two notations, $F (x,q)$ and $F_L (x,q)$, are very similar and we hope it will not cause a confusion. In fact, $F_L (x,q)$ is used in most of the paper, and in a few places where $F (x,q)$ is used we try to remind the reader about the relation between the two normalizations.

\begin{Remark}\label{rem:whichlinks}
By default here and below, we assume that $F_L$ is defined using the $R$-matrix at general $q$ (and that the link $L$ is of the type for which that definition works, see~\cite[Section~1.3]{Park2021}), but other approaches reviewed above can be also in principle considered.
\end{Remark}

By an ${\rm SL}(V,\Z)$ transform, we can bring the integral quadratic form $B$ to $B'\oplus \mathbf{0}_{b_1}$, where $\mathbf{0}_{b_1}$ is the trivial quadratic form on $\mathbb{Z}^{b_1}\subset \Z^{V}$~\cite{kyle1954branched}. Namely, there exists $U\in {\rm SL}(V,\Z)$ such that
\begin{equation*}
 UBU^{T}=
 \begin{pmatrix}
 B' & 0 \\
 0 & 0
 \end{pmatrix},
\end{equation*}
where the right-hand side shows the block decomposition corresponding to the partition $V=(V-b_1)+b_1$. With $\varepsilon=(1,1,\ldots,1)$ and $s$ being mod 2 vectors defined as before ($\varepsilon=(1,1,\ldots,1)$, $\sum_JB_{IJ}s_J=s_I\bmod 2$)
\begin{equation*}
\begin{pmatrix}
 s' \\ s''
 \end{pmatrix} :=\big(U^T\big)^{-1}s,\qquad
 \begin{pmatrix}
 \varepsilon' \\
 \varepsilon''
 \end{pmatrix} :=\big(U^T\big)^{-1}\varepsilon.
\end{equation*}
Then we can define $\hat{Z}$ by the following surgery formula:
\begin{equation}
 \hat{Z}_{\sigma(b'\oplus b'',s)}(M):=
 (-1)^{b_+}q^{\frac{3\sigma-\Tr B}{4}+\sum_{I}|{\varepsilon''}_I||b''_I|}\sum_{\substack{\ell'=2b'+B'(s'-\varepsilon')\\\bmod 2B'\Z^{V-b_1}}}
F_{U^{-1}
 \begin{psmallmatrix}
 \ell' \\
 2b''
 \end{psmallmatrix}}
 q^{-\frac{\ell'^TB'^{-1}\ell'}{4}}, \label{Zhat-b1-def}
\end{equation}
where $b'\in \mathrm{Coker}B'\cong \Tor H_1(M;\Z)$, $b''\in \Z^{b_1}$ and $\sigma$ is the canonical map defined in~\eqref{eq:definitionsigma}. Here we assume that the sum over $\ell'$ in the right-hand side is convergent in the space of formal power series $2^{-c}q^{\Delta}\Z\big[q^{-1},q\big]\big]$. This means that any given power of $q$ gets contributions only from a finite number of terms. The definition of $\hat{Z}$ via the surgery formula~\eqref{Zhat-b1-def} also relies on the conjectural invariance of the right-hand side under Kirby moves. It has been verified in the literature for the plumbing links~\cite{Gukov:2019mnk} and some other cases.

If $b_1=0$, then $B=B'$, and the above formula simplifies to
\begin{equation}
\hat{Z}_{\sigma(b,s)}(M)=(-1)^{b_+} q^{\frac{3\sigma- \Tr B}{4}}\sum_{\substack{\ell=2b+B(s-\varepsilon)\\\bmod 2B\Z^{V}}}F_{\ell}q^{-\frac{\ell^tB^{-1}\ell}{4}}.
\label{Zhat-surgery-RHS}
\end{equation}
If $b_1>0$, the result is independent of the choice of $U$ preserving both $b'$ and $b''$, and is conjecturally invariant under the Kirby moves only up to the following equivalence relation:
\begin{equation}
 1\sim q^{\text{LCM}(2,\operatorname{GCD}(b''))},
 \label{H1-splitting-equivalence}
\end{equation}
where $\operatorname{GCD}(b''):=\operatorname{GCD}\big(\{b_i''\}_{i=1}^{b_1}\big)$ (which is invariant under ${\rm SL}(b_1,\Z)$ transformations). Na\-me\-ly, the invariants should be considered as equivalence
\begin{equation}
 \hat{Z}_{\sigma(b'\oplus b'',s)} \in 2^{-c} q^{\Delta}\Z[[q]]/\big(1-q^{\text{LCM}(2,\operatorname{GCD}(b''))}\big)\Z[q].
 \label{non-torsion-zhat-space}
\end{equation}

It is easy to see that $\hat{Z}_{\sigma( \cdot ,s)}$ transform covariantly (as functions on $H_1(M;\Z)\cong \Tor H_1(M;\Z)\allowbreak\oplus \Z^{b_1}$) under the automorphisms preserving the splitting. They correspond to changes of the matrix $U$ of the following form:
\begin{equation*}
 U\rightsquigarrow
 \begin{pmatrix}
 \tilde{\gamma} & 0 \\
 0 & \nu
 \end{pmatrix} U,
\end{equation*}
where $\tilde{\gamma}\in {\rm SL}(V-b_1,\Z)$ and $\nu\in {\rm SL}(b_1,\Z)$. Then from the definition~\eqref{Zhat-b1-def}, it follows that
\begin{equation*}
 \hat{Z}_{\sigma(b'\oplus b'',s)}\rightsquigarrow \hat{Z}_{\sigma(\gamma^{-1}b'\oplus \nu^{-1}(b''),s)},
\end{equation*}
where $\gamma$ is the automorphism of $\Tor H_1(M;\Z)$ represented by $\tilde{\gamma}$. Namely,
\begin{equation*}
 \gamma (a'):=\tilde{\gamma}(a') \bmod B'\Z^{V-b_1},\qquad
 a'\in \Z^{V-b_1}.
\end{equation*}

However, there is an anomaly under the automorphisms changing the splitting. Modulo the automorphisms preserving the splitting, they are of the form
\begin{equation}
 \Tor H_1(M;\Z)\oplus \Z^{b_1}
 \stackrel{\begin{psmallmatrix}
 \text{id}_{\Tor H_1} & \mu \\
 0 & \text{id}_{\Z^{b_1}}
 \end{psmallmatrix}}{\longrightarrow}
 \Tor H_1(M;\Z)\oplus \Z^{b_1},
 \label{splitting-auto}
\end{equation}
where $\mu$ is a non-trivial homomorphism
\begin{equation}
 \mu\colon \ \Z^{b_1}\longrightarrow \Tor H_1(M;\Z).
 \label{free-to-tor}
\end{equation}
This automorphism is realized by the replacement
\begin{equation}
 U\rightsquigarrow
 \begin{pmatrix}
 \mathbf{1}_{V-b_1} & \tilde{\mu} \\
 0 & \mathbf{1}_{b_1}
 \end{pmatrix} U,
\end{equation}
where $\tilde{\mu}$ is a $(V-b_1)\times b_1$ matrix. Under the identification $\Tor H_1(M;\Z)=\operatorname{Coker} B'$, we have
\begin{equation*}
 \mu (a''):=\tilde{\mu}(a'') \bmod B'\Z^{V-b_1},\qquad
 a''\in \Z^{b_1}.
\end{equation*}
The corresponding change of $\hat{Z}$ depends only on $\mu$, and not the representative $\tilde{\mu}$, if one takes into account the equivalence relation~\eqref{H1-splitting-equivalence}.

However, it is \textit{not} true that simply $\hat{Z}_\sigma(b'\oplus b'',s)\rightsquigarrow \hat{Z}_\sigma((b'-\mu(b''))\oplus b'',s)$. The covariant transformation is corrected by an anomalous factor
\begin{equation}
 \hat{Z}_{\sigma(b'\oplus b'',s)}\rightsquigarrow q^{\mathcal{E}(b',b'')}\hat{Z}_{\sigma((b'-\mu(b''))\oplus b'',s)},
 \label{splitting-anomaly}
\end{equation}
where
\begin{equation*}
 \mathcal{E}(b',b'')=\operatorname{GCD}(b'')\cdot
 \begin{cases}
 \operatorname{GCD}(b'')q_s\big(\mu\hat{b}''\big)-2\lk\big(b',\mu\hat{b}''\big)
 ,& \operatorname{GCD}(b'')\text{ is odd}, \\
 \operatorname{GCD}(b'')\lk\big(\mu\hat{b}'',\mu\hat{b}''\big)-2\lk\big(b',\mu\hat{b}''\big)
 ,& \operatorname{GCD}(b'')\text{ is even}
 \end{cases}
\end{equation*}
and
\begin{equation*}
 \hat{b}'':=\frac{b''}{\operatorname{GCD}(b'')} \in \Z^{b_1}.
\end{equation*}
The factor $q^{\mathcal{E}(b',b'')}$ is well defined modulo the equivalence relation~\eqref{H1-splitting-equivalence}.

\subsection{Physics of non-torsion fluxes}

For a given spin$^c$ structure $b$, let $\CB_b$ be the corresponding boundary condition of the 6d fivebrane theory on $M \times D^2 \times_q S^1$. This defines the boundary condition in 4d gauge theory on $M \times \R_+$ (obtained by projecting $D^2 \times_q S^1 \to \R_+$) as well as the boundary condition in 3d theory $T[M]$ on $D^2 \times_q S^1$ (obtained by reducing on $M$):
\begin{equation}
\begin{tikzcd}
& \begin{array}{c} \text{6d $(0,2)$ theory on} \\
M \times D^2 \times_q S^1
\end{array} \ar[ld] \ar[rd, "\text{on $M$}"] & \\
\begin{array}{c} \text{4d super-Yang--Mills} \\
\text{on $M \times \R_+$}
\end{array} & &
\begin{array}{c} \text{3d theory}~T[M] \\
\text{on $D^2 \times_q S^1$.}
\end{array}
\end{tikzcd}
\label{dualitycascade}
\end{equation}
In each of these descriptions, including the original 6d system viewed from the enumerative perspective of Calabi--Yau 3-fold and M2-branes, the ambiguity is naturally associated with the partition function on $M \times T^2 \times I$, where both $M \times T^2$ boundaries are colored by $\CB_b$.

The effect of $\CB_b$ is two-fold: $(i)$ first, it effectively abelianizes the theory, and $(ii)$ it also puts it in a non-trivial background, so that even abelian fluxes with one ``leg'' along $M$ and another ``leg'' along $I$ now carry a non-trivial $q$-degree. Note, this latter effect is absent when $b$ is torsion. The sum over such fluxes gives a $q$-series unbounded in both directions
\begin{equation*}
\sum_{m \in H^1 (M)} q^{m \cdot b''}
\end{equation*}
that we would like to remove or factor out, in order to make the partition function on $M \times D^2 \times_q S^1$ well-defined. It would be interesting to explore various ways to do this. Relegating a more systematic study of this question for future work, here we merely sidestep the issue by imposing the identification $q^{\operatorname{GCD}(b'')} \sim 1$ that leads to~\eqref{H1-splitting-equivalence} and~\eqref{non-torsion-zhat-space}.

Let us make a few comments on this interesting phenomenon by examining it from various perspectives. Reduction to 4d gauge theory~\eqref{dualitycascade} yields a system of Kapustin--Witten PDE's on $M \times \R_+$, with a Nahm pole boundary condition at $y=0$, where $y$ is a coordinate along $\R_+$~\cite{Witten:2011zz}. The boundary condition at $y = \infty$ breaks the gauge group $G$ to a Levi subgroup $\mathbb{L} \subseteq G$, which in applications to $\hat Z$-invariants is a maximal torus of $G$. In particular, $\mathbb{L} = \operatorname{U}(1)$ for $G = \operatorname{SU}(2)$. Other choices of $\mathbb{L}$ are also interesting, and lead to a generalization of $\hat Z$-invariants labeled by complex coadjoint orbits of $G_{\C}$ or, equivalently, by $\rho\colon \mathfrak{sl} (2) \to \mathfrak{g}$~\cite{Gukov:2006jk}.

When $\mathbb{L} = \mathbb{T}$, there are infinitely many different topological sectors labeled by monopole numbers $b_i'' \in \Lambda_{\text{cochar}} = \operatorname{Hom} (\operatorname{U}(1), \mathbb{T})$ or, more precisely, by spin$^c$ structures. In order to keep track of dependence on $b$, one can introduce a topological term $\exp (2\pi {\rm i} \eta b)$ in the action of 4d gauge theory. Then, using Pontryagin duality,
\begin{equation*}
x = {\rm e}^{2\pi {\rm i} \eta} \in \operatorname{Hom} (H_1 (M), \mathbb{T})
\end{equation*}
can be identified with the variable by the same name in $F_K (x,q)$ and in the integrand of $\hat Z$-invariants. To summarize, on a closed 3-manifold with $b_1 > 0$ the boundary condition $\CB_b$ breaks the gauge group $G$ to $\mathbb{L} = \mathbb{T}$ and creates a ``flux'' $b$. When this flux is non-torsion, the solutions to Kapustin--Witten PDE's on $M \times \R_+$ can not approach a constant field configuration at $y = + \infty$. At best one can require solutions to approach a field configuration periodic in the $y$-direction, which leads us to conclude that the anomaly in question is controlled by the moduli space of solutions on $M \times S^1$ with gauge group $\mathbb{L}$. Below we give another interpretation of this claim from the perspective of 3d-3d correspondence.

From the point of view of 3d $\CN=2$ theory $T[M]$, the boundary condition $\CB_b$ labels the background momentum / charge sectors of the 2d boundary theory, cf.~\cite{Dedushenko:2017tdw}. In the partition function on $D^2 \times_q S^1$, the parameter $q$ keeps track of the spin with respect to the rotation symmetry of $D^2$. It is defined up to spins of BPS states in 2d boundary theory or, equivalently, 3d theory $T[M, \mathbb{T}]$ on a slab $T^2 \times I$ with boundary conditions $\CB_b$ on both sides. This theory is a~close cousin of $T\big[M \times S^1, \mathbb{T}\big]$ in the background of spin$^c$ structure $b$ on $M$. Indeed, both theories exhibit a qualitative change in behavior depending on whether $b_1 = 0$ or $b_1 > 0$. In the latter case, the BPS states come in towers infinite in both direction, with spins in each tower being multiples of $b''$, which leads again to the ambiguity~\eqref{H1-splitting-equivalence}.

Similarly, from the curve counting perspective on $T^* M$, when $b_1 > 0$ in addition to open BPS states one also has a non-trivial ``closed sector.''

\section{Families of examples}\label{sec:examples}

In this section, we illustrate the proposed relation with a number of instructive examples for which explicit formulas for $\hat{Z}$ can be provided (recall Remark~\ref{rem:whichlinks}).

\subsection{Plumbed 3-manifolds}
\label{sec:plumbed}

Here we consider the case when $M$ is a rational homology sphere given by a weakly negative definite plumbing graph $\Gamma$~\cite{Gukov:2019mnk, Gukov:2017kmk}. We can then assume that
\begin{equation*}
 \omega \in H^1(M;\Q/2\Z) \setminus H^1(M;\Z/2\Z).
\end{equation*}
With the notation of Section~\ref{sec:combinatorial}, formula~\eqref{eq:NrM} becomes
\begin{equation*}
 \N_r(M,\omega)=
 \frac{1}{\Delta_+^{b_+}\Delta_-^{b_-}}
 \sum_{k\in H_r^\vert}
 \prod_{I\in \vert} d(\alpha_{k_I})^{2-\deg(I)} T(\alpha_{k_I})^{B_{II}}
 \prod_{(I,J)\in \text{Edges}} S(\alpha_{k_I},\alpha_{k_J}),
\end{equation*}
where
\begin{equation*}
 \alpha_{k_I}:=\mu_I+k_I
\end{equation*}
and $b_\pm$ are the number of positive/negative eigenvalues of $B$. In this section, we will be somewhat cavalier with taking the limits and about convergence of infinite series. Such technical details will be properly addressed in Section~\ref{sec:gauss-vs-laplace} and Appendix~\ref{app:commutlimits}.

It is instructive to separate 3 factors:
\begin{gather*}
 \N_r(M,\omega)=\mathcal{A}\cdot \mathcal{B}\cdot \mathcal{C},
\\
 \mathcal{A}=r^{-V/2} \xi^{\frac{3\sigma-\Tr B}{2}}\cdot
 \begin{cases}
 {\rm e}^{\frac{\pi {\rm i}(\sigma+\Tr B)}{2}}, & r=1\bmod 4, \\
 2^{-V/2} {\rm e}^{-\frac{\pi {\rm i}\sigma}{4}}, & r=2\bmod 4, \\
 {\rm e}^{-\frac{\pi {\rm i}}{2}\Tr B} (-1)^\sigma, & r=3\bmod 4,
 \end{cases}
\end{gather*}
where $\sigma$ is the signature of $B$,
\begin{gather}
 \mathcal{B}=F\big(\big\{{\rm e}^{\pi {\rm i}\mu_I}\big\}_{I\in \vert}\big)^{-1},
 \label{factor-B}
\\
 \mathcal{C}=\sum_{k\in H_r^\vert} F\big(\big\{\xi^{\mu_I+k_I}\big\}_{I\in \vert}\big)\cdot
 \xi^{\frac{1}{2}(\mu+k)^TB(\mu+k)},
 \label{factor-C}
\end{gather}
and
\begin{equation}
 F(x):=\prod_{I\in \vert }(x_I-1/x_{I})^{2-\deg(I)}=\sum_{\ell \in\Z^\vert }F_\ell \prod_I x_I^{\ell_I}
 \label{F-function}
\end{equation}
is a slightly different normalization of the invariant $F_L(x,q)$ for the case of the plumbing link $L$. Consider a contribution of a monomial $\prod_I x_I^{\ell_I}$ from $F(x)$ into~\eqref{factor-C}:
\begin{equation}
 \mathcal{C}_\ell:=\sum_{k\in H_r^\vert} \xi^{\ell^T(\mu+k)}\cdot
 \xi^{\frac{1}{2}(\mu+k)^TB(\mu+k)}=
 \sum_{n\in \Z^\vert/r\Z^\vert}
 {\rm e}^{\frac{\pi {\rm i} }{2r}(\tilde\mu+2n)^TB(\tilde\mu+2n)+\frac{\pi {\rm i}}{r}\ell^T(\tilde\mu+2n)},
 \label{factor-C-monomialnew}
\end{equation}
where
\begin{equation*}
 \tilde\mu:=\mu+(r-1)\varepsilon
\end{equation*}
and $\varepsilon$ is the vector $(1,1,\ldots, 1,1)$.
We can now use the following version of Gauss reciprocity formula~\cite{deloup2007reciprocity,jeffrey1992chern}:
\begin{gather}
\sum_{n \in \Z^\vert/r\Z^\vert}
\exp\bigg(\frac{2\pi {\rm i}}{r} n^TBn+\frac{2\pi {\rm i}}{r} p^Tn\bigg)\nonumber
\\ \qquad
{}=\frac{{\rm e}^{\frac{\pi {\rm i}\sigma}{4}} (r/2)^{V/2}}{|\det{B}|^{1/2}}
\sum_{\tilde{a} \in \Z^\vert/2B\Z^\vert}
\exp\biggl(-\frac{\pi {\rm i} r}{2}\bigg(\tilde{a}+\frac{p}{r}\bigg)^TB^{-1}\bigg(\tilde{a}+\frac{p}{r}\bigg)\biggr).
\label{reciprocity-alt}
\end{gather}
Applying it to~\eqref{factor-C-monomialnew}, we have
\begin{equation}
 \mathcal{C}_\ell= \xi^{-\frac{\ell^TB^{-1}\ell}{2}}
 \frac{{\rm e}^{\frac{\pi {\rm i}\sigma}{4}}(r/2)^{V/2}}{|\det B|^{1/2}}
 \underbrace{
 \sum_{\tilde{a}\in \Z^\vert /2B\Z^{\vert}}
 {\rm e}^{-\frac{\pi {\rm i} r}{2} \tilde{a}^T B^{-1}\tilde{a}-\pi {\rm i} \tilde{a}^T B^{-1}(\ell+B\tilde\mu)}
 }_{=:\mathcal{C}_\ell'}.
 \label{C-ell-expr}
\end{equation}
Let us make the change of variables $\tilde{a}=BA+a$, $A\in \Z^\vert/2\Z^\vert$, $a\in\Z^\vert/B\Z^\vert$:
\begin{align}
 \mathcal{C}_\ell'&= \sum_{{a}\in \Z^\vert /B\Z^{\vert}}\sum_{A\in \Z^\vert/2\Z^\vert}
 {\rm e}^{-\frac{\pi {\rm i} r}{2} {a}^T B^{-1}{a}-\pi {\rm i}r A^Ta-\frac{\pi {\rm i}r}{2} A^TBA
 -\pi {\rm i} a^TB^{-1}(\ell+B\tilde{\mu})-\pi {\rm i} A^T(\ell+B\tilde\mu)} \nonumber
 \\
&=\sum_{{a}\in \Z^\vert /B\Z^{\vert}}\sum_{A\in \Z^\vert/2\Z^\vert}
 \exp\biggl\{-\frac{\pi {\rm i} r}{2} {a}^T B^{-1}{a}-\pi {\rm i}r A^Ta-\frac{\pi {\rm i}r}{2} A^TBA
\nonumber
 \\
&\hspace{45mm} -2\pi {\rm i} a^TB^{-1}b-\pi {\rm i} a^T(s+\mu-r\varepsilon)
 -\pi {\rm i} A^TB(s-r\varepsilon)\biggr\},
 \label{C-prime-sum}
\end{align}
where in the last line we used the fact that~\eqref{F-function} only contains powers $\prod_{I}x_I^{\ell_I}$ satisfying $\ell_I=\deg(I)\bmod 2$, and therefore one can introduce $b\in \Z^\vert/B\Z^\vert$, $s\in \Z^\vert/2\Z^\vert$, $\sum_{J}B_{IJ}s_J=B_{II}\bmod 2$, such that
\begin{equation}
 \ell=2b+B(s-\varepsilon)\bmod 2B\Z^\vert.
 \label{ell-H1-spin}
\end{equation}
We also used the property~\eqref{mu-condition}. At this point we will need to consider the cases with different~$r\bmod 4$ values separately.

\subsection[Level $r=2\bmod 4$]{Level $\boldsymbol{r=2\bmod 4}$}

Using the fact that $r$ is even, while $r/2$ is an odd integer, and condition on $s$ the sum~\eqref{C-prime-sum} simplifies to
\begin{equation*}
 \mathcal{C}_\ell'= 2^V\sum_{{a}\in \Z^\vert /B\Z^{\vert}}
 \exp\biggl\{-\frac{\pi {\rm i} r}{2} {a}^T B^{-1}{a}
 -2\pi {\rm i} a^TB^{-1}b-\pi {\rm i} a^T(s+\mu)\biggr\}.
\end{equation*}
Combining everything together we then have
\begin{align*}
 \N_r(M,\omega)={}& \frac{F\big(\big\{{\rm e}^{\pi {\rm i}\mu_I}\big\}_{I\in \vert}\big)^{-1}} {|\det B|^{1/2}} \xi^{\frac{3\sigma-\Tr B}{2}}
 \\
&{}\times \sum_{\ell \in \Z^\vert }\sum_{{a}\in \Z^\vert /B\Z^{\vert}}
 F_\ell \xi^{-\frac{\ell^TB^{-1}\ell}{2}}
 {\rm e}^{-\frac{\pi {\rm i} r}{2} {a}^T B^{-1}{a}
 -2\pi {\rm i} a^TB^{-1}b-\pi {\rm i} a^T(s+\mu)}.
\end{align*}
Using the following expressions for $\hat{Z}$ and Reidemeister torsion\footnote{In principle, the torsion $\mathcal{T}(M,\alpha)$, defined for $\alpha \in H^1(M;\Q/\Z)\stackrel{\lk}{\cong} H_1(M;\Z)$, has sign ambiguity, if no additional structures on $M$ are introduced. One can fix the sign for example by introducing a spin structure $s\in \Spin(M)$ on $M$, cf.~\cite{Mikhaylov:2015nsa}. The change of the spin structure $s\rightarrow s+c$, $c\in H^2(M;\Z_2)$ then changes the sign by $(-1)^{c(\tilde{\alpha})}$ where $\tilde{\alpha}\in H_1(M;\Z)$ is dual to $\alpha$. However, since in our case $\alpha =\omega\bmod 1$, $\tilde{\alpha}$ is even, and the dependence on spin structure drops out.} $\mathcal{T}$ for plumbed manifolds (see formula~\eqref{torsion-plumbed-spin} in Appendix~\ref{app:torsion})
\begin{gather}
\hat{Z}_{a}(M)=(-1)^{b_+} q^{\frac{3\sigma-\Tr B}{4}}
\sum_{\ell =a\bmod 2B\Z^\vert}F_\ell q^{-\frac{\ell^T B^{-1}\ell}{4}},\qquad
 a_I=\deg(I)\bmod 2,\nonumber
\\
 \mathcal{T}(M,[\omega])=(-1)^{b_+}\prod_{I\in \vert}\big({\rm e}^{\pi {\rm i}\mu_I}-{\rm e}^{-\pi {\rm i} \mu_I}\big)^{\deg(I)-2},\qquad
 [\omega]:=\omega \bmod H^1(M;\Z/2\Z)
\label{eq:zhatplumbing}
\end{gather}
and the identifications~\eqref{plumbed-H_1}--\eqref{plumbed-spinc}, we can conjecture the following general relation for a rational homology sphere $M$ and $r=2\bmod 4$:
\begin{equation}
 \N_{r}(M,\omega)=
 \frac{\mathcal{T}(M,[\omega])}{\sqrt{|H_1(M;\Z)|}}
 \sum_{a,b\in H_1(M;\Z)}
 {\rm e}^{-\frac{\pi {\rm i}r}{2} q_s(a)-2\pi {\rm i} \lk(a,b)-\pi {\rm i} \omega(a)}
 \hat{Z}_{\sigma(b,s)}\Big|_{q\rightarrow {\rm e}^{\frac{2\pi {\rm i}}{r}}},
\label{CGP-Zhat-2mod4}
\end{equation}
where $\sigma$ is the canonical map
\begin{equation}
 \sigma\colon \ H_1(M;\Z)\times \operatorname{Spin}(M) \longrightarrow \operatorname{Spin}^c(M)
 \label{spin-spinc-map}
\end{equation}
producing a spin$^c$ structure on $M$ from a spin structure $c$ and $\tilde{b}\in H_1(M;\Z)$. It is induced by the map $ B\operatorname{Spin}\times B\operatorname{U}(1) \rightarrow B\operatorname{Spin}^c$ between the corresponding classifying spaces, combined with the isomorphisms $B\operatorname{U}(1)\cong B^2\Z$, $H_1(M;\Z)\cong H^2(M;\Z)$. In~\eqref{CGP-Zhat-2mod4}, we have introduced an auxiliary spin structure $s\in \Spin(M)$ (see also formula~\eqref{ell-H1-spin}). The result is independent of it due to~\eqref{qref-spin-change}, so that the simultaneous change of $b\in H_1(M;\Z)$ and $s\in \Spin(M)$ leaving $\sigma(b,s)$ invariant also leaves invariant the exponent in the sum in~\eqref{CGP-Zhat-2mod4}.

\subsection[Level $r=1\bmod 4$]{Level $\boldsymbol{r=1\bmod 4}$}

The sum~\eqref{C-prime-sum} reads
\begin{align*}
 \mathcal{C}_\ell'=\sum_{a\in \Z^\vert/B\Z^\vert}
 \sum_{A\in \Z^\vert/2\Z^\vert}
 \exp\biggl\{-&\frac{\pi {\rm i} r}{2} a^TB^{-1}a
 -2\pi {\rm i} a^TB^{-1}b-\pi {\rm i} a^T\mu
\\
 -&\pi {\rm i} a^T(s-\varepsilon)-\frac{\pi {\rm i}}{2} A^TBA+\pi {\rm i} A^T(a+B(s-\varepsilon))
 \biggr\}.
\end{align*}
Applying a version of the Gauss reciprocity formula to the sum over $A$, we can rewrite it as follows:
\begin{equation*}
\begin{split}
 \mathcal{C}_\ell'=
 \frac{{\rm e}^{-\frac{\pi {\rm i}\sigma}{4}}2^{V/2}}{|\det B|^{1/2}}
\sum_{a,f\in \Z^\vert/B\Z^\vert}
\exp\biggl\{{-} &\frac{\pi {\rm i} (r-1)}{2} a^T B^{-1}a
 -2\pi {\rm i} a^TB^{-1}b-\pi {\rm i} a^T\mu\nonumber
\\
 +&2\pi {\rm i} f^TB^{-1}f
 +2\pi {\rm i} f^TB^{-1}a
 +\frac{\pi {\rm i}}{2} (s-\varepsilon)^TB(s-\varepsilon)
\biggr\}.
\end{split}
\end{equation*}
Taking into account that
\begin{gather*}
 \frac{1}{4}\varepsilon^T B \varepsilon = \frac{V-1}{2}+\frac{1}{4} \Tr B \bmod 1,
\qquad
 \frac{1}{2}\varepsilon^T B s = \frac{1}{2} \Tr B \bmod 1,
\end{gather*}
and combining everything together we then have
\begin{align*}
 \N_r(M,\omega)={}&
 \frac{F(\{{\rm e}^{\pi {\rm i}\mu_I}\}_{I\in \vert})^{-1}}{|\det B|}
 {\rm e}^{\frac{\pi {\rm i}}{2}(2-\sigma+s^TBs )}
 \xi^{\frac{3\sigma-\Tr B}{2}} \sum_{\ell \in \Z^\vert }\sum_{a,f\in \Z^\vert /B\Z^{\vert}}
 F_\ell \xi^{-\frac{\ell^TB^{-1}\ell}{2}}
 \\
&{}\times
 {\rm e}^{-\frac{\pi {\rm i} (r-1)}{2} a^TB^{-1}a
 -2\pi {\rm i} a^TB^{-1}b-\pi {\rm i} a^T\mu
 +2\pi {\rm i} f^TB^{-1}f
 +2\pi {\rm i} f^TB^{-1}a }.
\end{align*}
As in the case $r=2\bmod 4$, we can then conjecture the following general relation for a rational homology sphere $M$ and $r=1\bmod 4$:
\begin{align}
 \N_{r}(M,\omega)={}&
 \frac{-{\rm e}^{-\frac{\pi {\rm i}}{2}\mu(M,s)} \mathcal{T}(M,[\omega])}{{|H_1(M;\Z)|}} \nonumber
 \\
 &\times{} \sum_{a,b,f\in H_1(M;\Z)} {\rm e}^{2\pi {\rm i}\left(-\frac{r-1}{4}\lk(a,a)
 +\lk(a,f-b)-\frac{1}{2}\omega(a) +\lk(f,f)\right)}
 \hat{Z}_{\sigma(b,s)}\Big|_{q\rightarrow {\rm e}^{\frac{2\pi {\rm i}}{r}}},
 \label{CGP-Zhat-1mod4}
\end{align}
where we have used the surgery formula~\eqref{Rokhlin-mod4} for the $\pmod 4$ reduction of the Rokhlin invariant $\mu(M,s)$.

It is interesting to remark that unlike in the case $r=2\pmod 4$ here the relation between~$\N_r$ and~$\hat{Z}$ is based on a \emph{triple} summation (instead of double).

\subsection[Level $r=3\bmod 4$]{Level $\boldsymbol{r=3\bmod 4}$}

This case is analogous to the case $r=1\bmod 4$ considered above. When $r=3\bmod 4$, the sum~\eqref{C-prime-sum} reads
\begin{align*}
 \mathcal{C}_\ell'=\sum_{a\in \Z^\vert/B\Z^\vert}
 \sum_{A\in \Z^\vert/2\Z^\vert}
 \exp\biggl\{-&\frac{\pi {\rm i} r}{2} a^TB^{-1}a
 -2\pi {\rm i} a^TB^{-1}b-\pi {\rm i} a^T\mu
\\
 -&\pi {\rm i} a^T(s-\varepsilon)+\frac{\pi {\rm i}}{2} A^TBA+\pi {\rm i} A^T(a+B(s-\varepsilon))
 \biggr\}.
\end{align*}
Applying again the Gauss reciprocity formula to the sum over $A$ we have
\begin{align*}
 \mathcal{C}_\ell'=
 \frac{{\rm e}^{\frac{\pi {\rm i}\sigma}{4}}2^{V/2}}{|\det B|^{1/2}}
 \sum_{a,f\in \Z^\vert/B\Z^\vert}
 \exp\biggl\{-&\frac{\pi {\rm i} (r+1)}{2} a^TB^{-1}a
 -2\pi {\rm i} a^TB^{-1}b-\pi {\rm i} a^T\mu
 \\
 -&2\pi {\rm i} f^TB^{-1}f -2\pi {\rm i} f^TB^{-1}a
 -\frac{\pi {\rm i}}{2} (s-\varepsilon)^TB(s-\varepsilon) \biggr\}.
\end{align*}
Combining everything together we then have
\begin{align*}
 \N_r(M,\omega)={}&
 \frac{F(\{{\rm e}^{\pi {\rm i}\mu_I}\}_{I\in \vert})^{-1}}{|\det B|}
 {\rm e}^{\frac{\pi {\rm i}}{2}(2+\sigma-s^TB^{-1}s )}
 \xi^{\frac{3\sigma-\Tr B}{2}} \sum_{\ell \in \Z^\vert }\sum_{a,f\in \Z^\vert /B\Z^{\vert}}
 F_\ell \xi^{-\frac{\ell^TB^{-1}\ell}{2}}
 \\
 &\times
 {\rm e}^{-\frac{\pi {\rm i} (r-1)}{2} a^TB^{-1}a
 -2\pi {\rm i} a^TB^{-1}b-\pi {\rm i} a^T\mu
 +2\pi {\rm i} f^TB^{-1}f
 +2\pi {\rm i} f^TB^{-1}a }.
\end{align*}
We can then conjecture the following general relation for a rational homology sphere $M$ and $r=3\bmod 4$:
\begin{equation}
\begin{aligned}[b]
\N_{r}(M,\omega)= &
 \frac{-{\rm e}^{\frac{\pi {\rm i}}{2}\mu(M,s)} \mathcal{T}(M,[\omega])}{{|H_1(M;\Z)|}}
 \\
 &\times {} \sum_{a,b,f\in H_1(M;\Z)}
 {\rm e}^{2\pi {\rm i}\left(-\frac{r+1}{4}\lk(a,a)
 -\lk(a,f+b)-\frac{1}{2}\omega(a) -\lk(f,f) \right)}
 \hat{Z}_{\sigma(b,s)}\Big|_{q\rightarrow {\rm e}^{\frac{2\pi {\rm i}}{r}}}.
\end{aligned}
\label{CGP-Zhat-3mod4}
\end{equation}

\subsection[Generalization to $b_1>0$]{Generalization to $\boldsymbol{b_1>0}$}
\label{sec:b1-positive}

Let $M$ be, as before, obtained by a surgery on a link with the linking matrix $B$. However, now we will allow $B$ to be degenerate. By an ${\rm SL}(V,\Z)$ transform we can bring the quadratic form $B$ to $B'\oplus \mathbf{0}_{b_1}$ where $\mathbf{0}_{b_1}$ is the trivial quadratic form on $\mathbb{Z}^{b_1}\subset \Z^{V}$~\cite{kyle1954branched}. The expression~\eqref{C-ell-expr} then will be modified to
\begin{equation*}
\begin{aligned}[b]
 \mathcal{C}_\ell={}&
 \xi^{-\frac{{\ell'}^T{B'}^{-1}{\ell'}}{2}}
 \frac{{\rm e}^{\frac{\pi {\rm i}\sigma}{4}}(r/2)^{(V-b_1)/2}}{|\det B'|^{1/2}}
 \\
 &\times{}
 \sum_{\tilde{a}\in \Z^{V-b_1} /2B'\Z^{V-b_1}}
 {\rm e}^{-\frac{\pi {\rm i} r}{2} \tilde{a}^T (B')^{-1}\tilde{a}-\pi {\rm i} \tilde{a}^T (B')^{-1}(\ell'+B'\tilde{\mu}')}
 r^{b_1} {\rm e}^{\frac{\pi {\rm i}{\ell''}^T \tilde{\mu}''}{r}}
 \delta_{\ell''=0\bmod r},
 \end{aligned}
\end{equation*}
where $\ell=\ell'\oplus \ell''$ and $\tilde{\mu}=\tilde{\mu}'\oplus \tilde{\mu}''$ according to the splitting of $B$ above.

Consider first the case of $r=2\bmod 4$. We have
\begin{equation*}
\mathcal{A}=\xi^{\frac{3\sigma-\Tr B}{2}}
r^{-V/2+b_1/2} 2^{b_1/2-V/2}
{\rm e}^{-\frac{\pi {\rm i}\sigma}{4}}.
\end{equation*}
The relation~\eqref{CGP-Zhat-2mod4} generalizes to
\begin{gather}
 \N_{r}(M,\omega)\nonumber
 \\
 {}=\frac{r^{b_1}\mathcal{T}(M,[\omega])}{\sqrt{| \Tor H_1(M;\Z)|}}\sum_{\substack{a',b'\in \Tor H_1(M;\Z)\\ m \in \Z^{b_1}}}\!\!\!\!\!
 {\rm e}^{-\frac{\pi {\rm i}r}{2}q_s(a')-2\pi {\rm i} \lk(a',b')-\pi {\rm i}\omega(a')+\pi {\rm i} \omega''(m)}
 \hat{Z}_{\sigma(b'\oplus rm/2,s)}\Big|_{q\rightarrow {\rm e}^{\frac{2\pi {\rm i}}{r}}}\nonumber
 \\
{} =\frac{r^{b_1}\mathcal{T}(M,[\omega])}{\sqrt{| \Tor H_1(M;\Z)|}}
 \int_{H^1(M;\R/\Z)}\!\!\!\!\mu(\alpha)\!\!\!\!\sum_{b\in H_1(M;\Z)}
 {\rm e}^{-\frac{\pi {\rm i}r}{2}q_s(\alpha')-2\pi {\rm i} \lk(\alpha',b')-\pi {\rm i}\omega(\alpha')+2\pi {\rm i} \alpha''(b'')}\nonumber
 \\[-2mm]
 \hspace{70mm}{}\times \delta(r\alpha''-\omega''/2)
 \hat{Z}_{\sigma(b,s)}\Big|_{q\rightarrow {\rm e}^{\frac{2\pi {\rm i}}{r}}},
 \label{CGP-Zhat-2mod4-mod}
\end{gather}
where we chose explicit splittings $\omega =\omega'\oplus \omega''\in H^1(M;\C/2\Z)\cong \Tor{H_1(M;\Z)} \oplus (\C/2\Z)^{b_1}$ and $b=b'\oplus b''\in H_1(M;\Z)\cong \Tor{H_1(M;\Z)} \oplus \Z^{b_1}$ according to the splitting of the linking matrix $B$ above.
It is straightforward to see that the right-hand side is independent of the choice of representative of the equivalence class~\eqref{H1-splitting-equivalence}. The coefficients in the relation~\eqref{CGP-Zhat-2mod4-mod} are \textit{not} invariant under the automorphisms~\eqref{splitting-auto}. However, this compensated by the non-covariance of~$\hat{Z}$. Taking into account~\eqref{splitting-anomaly}, one can show that the total sum in~\eqref{CGP-Zhat-2mod4-mod} transforms covariantly (i.e., as a function on $H^1(M;\C/2\Z)\cong \Hom(\Tor H_1(M;\Z)\oplus \Z^{b_1},\C/2\Z)$).
Namely, considering that
\begin{equation*}
 q^{\mathcal{E}(b',b'')}\big|_{b''=\frac{mr}{2}}=
 {\rm e}^{\frac{\pi {\rm i}r}{2}q_s(\mu m)-2\pi {\rm i} \lk(\mu m,b')}
\end{equation*}
and shifting the summation variables $b'\rightarrow b'+\frac{r}{2} \mu m$, $a'\rightarrow a'-\mu m$, we have indeed
\begin{equation*}
 \N_{r}(M,\omega'\oplus \omega'')
 \rightsquigarrow \N_{r}(M,\omega'\oplus (\omega''+2\mu^*\omega)),
\end{equation*}
where
$
 \omega'\oplus \omega''
 \in \Tor H_1(M;\Z)\oplus (\C/2\Z)^{b_1}
$
and
$
 \mu^*\colon \Tor H_1(M;\Z)\rightarrow (\C/\Z)^{b_1}
$
is the map dual to~$\mu$ in~\eqref{free-to-tor}.

For $r=1\bmod 4$, we have
\begin{equation*}
 \mathcal{A}=r^{-V/2+b_1/2}
 \xi^{\frac{3\sigma-\Tr B}{2}}
 {\rm e}^{\frac{\pi {\rm i}}{2}(\Tr B+\sigma)}
\end{equation*}
and
\begin{align}
 \N_{r}(M,\omega)&=
 \frac{r^{b_1} \mathcal{T}(M,[\omega])}{{|\Tor H_1(M;\Z)|}}
 \sum_{\substack{a',b',f'\in \Tor H_1(M;\Z)\\m\in \Z^{b_1}}}
 \exp\bigg\{2\pi {\rm i}\biggl(
 -\frac{r-1}{4}\lk(a',a')
 +\lk(a',f'-b')\nonumber
 \\
 &-\frac{1}{2}\omega(a')
 +\lk(f',f')
 -\Delta_{\sigma(b',s)}-\lk(b',b')
 +\omega''(m)\biggr)\bigg\}
\hat{Z}_{\sigma(b'\oplus rm,s)}\Big|_{q\rightarrow {\rm e}^{\frac{2\pi {\rm i}}{r}}}.
\label{CGP-Zhat-1mod4-mod}
\end{align}

\begin{Example}
An interesting example is $M=\Sigma_g\times S^1$.
In this case $\lk=0$ and $q_s=0$.
Furthermore, we have the identity:
\begin{equation*}
\mathcal{T}(M,[\omega])=(-2)^{b_1+1}\bigg(\frac{\rm i}{4}\bigg)^{b_1}\frac{\rm i}{2}\N_2(M,2\omega).
\end{equation*}
Thus replacing in the above formula we get
\begin{equation*}
 \frac{\N_{r}\big(\Sigma_g\times S^1,\omega\big)}{\N_{2}\big(\Sigma_g\times S^1,2\omega\big)}={r^{b_1}(-2)^{1+b_1}\bigg(\frac{\rm i}{4}\bigg)^{b_1}\frac{\rm i}{2}}
 \sum_{m \in \Z^{2g+1}} {\rm e}^{+\pi {\rm i} \omega(m)} \hat{Z}_{\sigma(\frac{rm}{2},s)}\Big|_{q\rightarrow {\rm e}^{\frac{2\pi {\rm i}}{r}}}.
\end{equation*}
If we now recall that
\begin{equation*}
\N_r\big(\Sigma_g\times S^1,\omega\big)=r^{2g}\sum_{k\in H_r}\bigg( \frac{\{r\beta\}}{\{\beta+k\}}\bigg)^{2g-2}
\end{equation*}
\big(so in particular,
$\N_2\big(\Sigma_g\times S^1,2\omega\big)=2^{2g+1}\frac{1}{{\rm i}^{2g-2}}\big({\rm i}^{2\beta}-{\rm i}^{-2\beta}\big)^{2g-2}$\big), where $\beta=\omega\big(S^1\big)$ (note the formula does not depend on the orientation of $S^1$), then we get
\begin{equation}\label{eq:sigmas1}
 \frac{1}{r}\sum_{k\in H_r}\frac{\{r\beta\}^{2g-2}}{(\{\beta+k\}({\rm i}^{2\beta}-{\rm i}^{-2\beta}))^{2g-2}} =\sum_{m \in \Z^{2g+1}} {\rm e}^{+\pi {\rm i} \omega(m)} \hat{Z}_{\sigma(\frac{rm}{2},s)}\Big|_{q\rightarrow {\rm e}^{\frac{2\pi {\rm i}}{r}}}.
\end{equation}

In order to compute $\hat{Z}_\mathfrak{s}$, we observe that the formula~\eqref{eq:zhatplumbing} providing the value of $\hat{Z}$ for surgeries over plumbing links can be generalised to manifolds which are the boundaries of a~plumbing of surfaces in a tree-like fashion. In particular, for $M=\Sigma_g\times S^1$,
\begin{equation}
\big(x-x^{-1}\big)^{2-2g}=\sum_{\ell \in \Z} F_\ell x^\ell.
\end{equation}
Since $\Tor( H_1(M))=0$, the first Chern class provides a bijection between the set of spin$^c$ structures on $M$ and $H^2(M)=H_1(M)$, but $\hat{Z}_\mathfrak{s}$ is zero for all spin$^c$ structures $\mathfrak{s}$ such that $c_1(\mathfrak{s})\neq PD\big(\ell \big[\{pt\}\times S^1\big]\big)$ for some $\ell \in \Z$. So letting $\ell$ be the spin$^c$ structure the first Chern class of which is Poincar\'e dual to $\ell\big[\{pt\}\times S^1\big]$ and using~\eqref{Zhat-b1-def} with $U={\rm Id}$, $B=0$, $\sigma=0=b_+$, we have
\smash{$\hat{Z}_\ell=q^{\ell} F_\ell$}.

Therefore, equation~\eqref{eq:sigmas1} becomes
\begin{equation}\label{eq:proofverlinde}
 \sum_{k\in H_r}\frac{1}{\big({\rm e}^{\frac{\pi {\rm i}}{r} (\beta+k)}-{\rm e}^{-\frac{\pi {\rm i}}{r}(\beta+k)}\big)^{2g-2}}=r\sum_{m \in \Z}{\rm e}^{\pi {\rm i} m\beta} \hat{Z}_{\sigma(\frac{rm}{2},s)}\big|_{q\rightarrow {\rm e}^{\frac{2\pi {\rm i}}{r}}}.
\end{equation}
Then we have that the left-hand side of~\eqref{eq:proofverlinde} equals
\begin{equation*}
 \sum_{\ell\in \Z}F_\ell\sum_{k\in H_r}{\rm e}^{\ell \frac{\pi {\rm i}}{r} (\beta+k)}=r\sum_{l\in \Z} F_{rl} {\rm e}^{\pi {\rm i} l\beta}(-1)^{l(r-1)}
\end{equation*}
and the right-hand side equals
\begin{equation*}
r\sum_{m\in \Z}{\rm e}^{\pi {\rm i} m\beta} F_{rm}{\rm e}^{\pi {\rm i} m}.
\end{equation*}
So if $r$ is even then~\eqref{eq:proofverlinde} is verified directly and if $r$ is odd, then the equality is true because $F_{rl}=0$ for odd $l$.
\end{Example}

\subsection{Surgeries on knots}

Consider $M=S^3_p(K)$ and assume that $-p\in \Z_+$ for concreteness. Then recalling (see Section~\ref{sec:combinatorial}) that a spin$^c$ structure on $S^3_p(K)$ can be encoded by an integer congruent to $p\mod 2$ and that two integers give the same structure iff they differ by a multiple of $2p$, the general surgery formula~\eqref{Zhat-b1-def} reads:
\begin{equation}
 \hat{Z}_a\big[S^3_p(K)\big]= q^{-\frac{3+p}{4}}
 \sum_{\ell =a \bmod 2p} F_\ell q^{-\frac{\ell^2}{4p}}, \label{Zhat-p-surgery}
\end{equation}
where the coefficients $F_\ell$ appear in the expansion of
\begin{equation}
 F(x,q):=F_K\big(x^2,q\big)\big(x-x^{-1}\big)= \sum_{\ell} F_\ell x^\ell
 \label{F-function-knot}
\end{equation}
with $F_K$ introduced in~\cite{Gukov:2019mnk}. We will use the facts that \smash{$F_K\big(x^{-2},q\big)=-F_K\big(x^2,q\big)$} and that \smash{$F_K\big(x^2,q\big)\in x\Z\big[\big[x^{\pm 2},q\big]\big]$}. In particular, it follows that $\hat{Z}_a\equiv 0$ for $a=1\bmod 2$.
As argued in~\cite{Gukov:2020lqm}, the series $F_K (x,q)$ gives ADO polynomials at roots of unity (which, in turn, are related to the CGP invariants for knot complements). Note, this already establishes a relation between $F_K (x,q) := \hat{Z} \big(S^3 \setminus K\big)$ and $\N_r (K)$ for knots, and tells us the relation between the parameters: in order to obtain $\N_r$ on the CGP side this large class of examples shows that on the GPPV side we need to take $q = {\rm e}^{2 \pi {\rm i} /r}$ (not $q = {\rm e}^{\pi {\rm i} /r}$).

With our choice of normalization, Conjecture~\ref{conj:park}\,$(b)$ states that
\begin{equation}
\left. F_K (x,q) \right|_{q = \xi^2} =
\frac{\operatorname{ADO}_r(K) \big(x/\xi^2\big)}{\Delta_K (x^{r})} \cdot \big( x^{1/2} - x^{-1/2} \big), \qquad
\xi := {\rm e}^{\pi {\rm i} /r},
\label{FKADO}
\end{equation}
where $\Delta_K(t)$ denotes the Alexander polynomial of $K$.
We wish to compose this with the relation between ADO polynomials and CGP invariants for knot complements
\begin{equation*}
\operatorname{ADO}_r (K)\big(x^2/\xi^2\big) =
\frac{x^r - x^{-r}}{x - x^{-1}} \N_r (K_{\alpha}), \qquad
\text{where}\quad x = {\rm e}^{\frac{\pi {\rm i} \alpha}{r}}.
\end{equation*}
Eliminating the ADO polynomial from the above two relations we get a more direct relation between GPPV and CGP invariants for knot complements
\begin{equation}
 \mathrm{N}_r(K_\alpha) =
 \frac{F_K\big({\rm e}^{\frac{2\pi {\rm i}\alpha}{r}},{\rm e}^{\frac{2\pi {\rm i}}{r}}\big)\Delta_{K}\big({\rm e}^{2\pi {\rm i}\alpha}\big)}{\big({\rm e}^{\pi {\rm i} \alpha}-{\rm e}^{-\pi {\rm i}\alpha }\big)}.
 \label{CGP-Zhat-knot2}
\end{equation}
This is the particular case of the more general Conjecture~\ref{conj:FADO2} $(b)$ combined with Conjecture~\ref{conj:FADO3}, in the case of the knot with zero framing.
On the other hand, we have
\begin{equation}
 \mathrm{N}_r\big(S^3_p(K),\omega\big)
 = \frac{1}{\Delta_-} \sum_{k \in H_r} d(\alpha_k)
 \N_r(K_{\alpha_k})T(\alpha_k)^p,
 \label{CGP-p-surgery}
\end{equation}
where $\mathrm{N}_r(K_{\alpha})$ denotes CGP invariant of a (zero framing) knot in $S^3$ colored by $\alpha$ (e.g., for unknot~$U$ with zero framing $\mathrm{N}_r(U_\alpha)=d(\alpha)$). The other notations are the same as in Section~\ref{sec:CGP}. In~particular, $\alpha_{k}:=\alpha+\mu$, where $\mu=\omega(\mathfrak{m})\in \frac{2}{p}\Z/\Z$ and $\mathfrak{m}\in H_1\big(S^3_p(K);\Z\big)\cong \Z/p\Z$ is the generator represented by the meridian of the knot $K$.

We want to check that the surgery formulas for homological blocks $\hat{Z}$ and CGP invariant $\mathrm{N}_r$ are consistent with the conjectural relations between these invariants for knot complements and closed manifolds. In other words, we want to check commutativity of the following schematic diagram:
	\begin{equation*}
	\begin{tikzcd}[row sep=huge, column sep=huge]
	 \hat{Z}_b\big[S^3_{p}(K)\big](q) \ar[r,"q\rightarrow {\rm e}^{\frac{2\pi {\rm i}}{r}}"] &
	 \mathrm{N}_r\big(S^3_{p}(K),\omega\big) \\
	 F_K(x,q)
	 \ar[u,"\substack{\text{Laplace}\\\text{transform}}"]
	 \ar[r,"q\rightarrow {\rm e}^{\frac{2\pi {\rm i}}{r}}"]
	 &
	 \mathrm{N}_r(K_\alpha),
	 \ar[u,"\substack{\text{Kirby}\\\text{color}}"]
\end{tikzcd}
	\end{equation*}
where, more concretely, the left vertical arrow is given by the equations~\eqref{Zhat-p-surgery}--\eqref{F-function-knot}, the right vertical arrow is given by~\eqref{CGP-p-surgery}, the top horizontal arrow is given by~\eqref{CGP-Zhat-1mod4},~\eqref{CGP-Zhat-2mod4} and~\eqref{CGP-Zhat-3mod4}, and the bottom horizontal arrow is given by~\eqref{CGP-Zhat-knot2}.

This check can be done by essentially repeating the analysis done for plumbings in case of a~single vertex (with $F$ in~\eqref{F-function} replaced by~\eqref{F-function-knot}) and using the following well known relation between the Reidemeister torsion of $M=S^3_p(K)$ and the Alexander polynomial of the knot~$K$:
\begin{equation}
 \mathcal{T}\big(S^3_p(K),t\big)= \frac{t\Delta_K(t)}{(1-t)^2}\bigg|_{t\in \Z_p\subset \operatorname{U}(1)},
 \label{torsion-surgery}
\end{equation}
where $t$ is the holonomy of $\operatorname{U}(1)$ flat connection along the the meridian $\mathfrak{m}$ of the knot $K$. But let us write it explicitly anyway.

As for plumbings, after plugging~\eqref{CGP-Zhat-knot} into~\eqref{CGP-p-surgery}, it is instructive to separate the result into three factors:
\begin{equation*}
 \N_r\big(S^3_p(K),\omega\big)=\mathcal{A}\cdot \mathcal{B}\cdot \mathcal{C},
\end{equation*}
where
\begin{gather}
 \mathcal{A}=r^{-1/2} \xi^{-\frac{3+p}{2}}\cdot
 \begin{cases}
 {\rm e}^{\frac{\pi {\rm i} p}{2}} {\rm e}^{-\frac{\pi {\rm i}}{2}}, & r=1\bmod 4, \\
 2^{-1/2} {\rm e}^{\frac{\pi {\rm i}}{4}}, & r=2\bmod 4, \\
 -{\rm e}^{-\frac{\pi {\rm i} p}{2}}, & r=3\bmod 4,
 \end{cases}
 \label{factor-A-surgery}
\\
 \mathcal{B}=\frac{\Delta_K \big({\rm e}^{2\pi {\rm i}\mu}\big)}{\big({\rm e}^{\pi {\rm i}\mu}-{\rm e}^{-\pi {\rm i}\mu}\big)^2},
 \label{factor-B-surgery}
\\
 \mathcal{C}=\sum_{k\in H_r} F\big(\xi^{\mu+k},\xi^2\big)\cdot
 \xi^{\frac{p}{2}(\mu+k)^2},
 \label{factor-C-surgery}
\end{gather}
where, as before, $\xi:={\rm e}^{\frac{\pi {\rm i}}{r}}$.
Consider a contribution of a monomial $x^\ell$ from $F_K(x)$ into~\eqref{factor-C-surgery}
\begin{equation}
 \mathcal{C}_\ell:=\sum_{k\in H_r} \xi^{\ell(\mu+k)}\cdot
 \xi^{\frac{p}{2}(\mu+k)^2}=
 \sum_{n\in \Z/r\Z}
 {\rm e}^{\frac{\pi {\rm i} p}{2r}(\tilde\mu+2n)^2+\frac{\pi {\rm i}}{r}\ell(\tilde\mu+2n)},
 \label{factor-C-monomial}
\end{equation}
where
\begin{equation*}
 \tilde\mu:=\mu+(r-1).
\end{equation*}
We can now use the following one-dimensional Gauss reciprocity formula:
\begin{equation*}
\sum_{n \in \Z/r\Z}
\exp\bigg(\frac{2\pi {\rm i}}{r}\big(p n^2+\ell n\big)\bigg)
= {\rm e}^{\frac{\pi {\rm i} \operatorname{sign}(p)}{4}} \sqrt{\frac{r}{2|p|}}
\sum_{\tilde{a} \in \Z/2p\Z}
\exp\biggl(-\frac{\pi {\rm i} r}{2p}\bigg(\tilde{a}+\frac{\ell}{r}\bigg)^2\biggr).
\end{equation*}
Applying it to~\eqref{factor-C-monomial} we have
\begin{equation*}
 \mathcal{C}_\ell= \xi^{-\frac{\ell^2}{2p}}
 {\rm e}^{-\frac{\pi {\rm i}}{4}} \sqrt{\frac{r}{2|p|}}
 \sum_{\tilde{a}\in \Z /2p\Z}
 {\rm e}^{-\frac{\pi {\rm i} r}{2p} \tilde{a}^2-\pi {\rm i} \tilde{a} (\ell/p+\tilde\mu)}.
\end{equation*}
Therefore, taking into account~\eqref{Zhat-p-surgery} we can write
\begin{equation*}
 \mathcal{C}=
 \sum_{\ell}\mathcal{C}_\ell F_\ell =
 {\rm e}^{-\frac{\pi {\rm i}}{4}} \xi^{\frac{3+p}{2}}
 \sqrt{\frac{r}{2|p|}}
 \sum_{\substack{b\in \Z/p\Z \\ \tilde{a}\in \Z/2p\Z}}
 {\rm e}^{-\frac{\pi {\rm i} r}{2p}\tilde{a}^2
 -\frac{2\pi {\rm i} \tilde{a}b}{p}-\pi {\rm i}\tilde{\mu}\tilde{a}}
 \hat{Z}_{2b}\big[S^3_p(K)\big]\Big|_{q\rightarrow \xi^2}.
\end{equation*}
Combining this together with~\eqref{factor-A-surgery} and~\eqref{factor-B-surgery}, we get
\begin{align*}
 \mathrm{N}_r\big(S^3_p(K),\omega\big) ={}&
 \frac{1}{\sqrt{{|p|}}}
 \frac{\Delta_K ({\rm e}^{2\pi {\rm i}\mu})}{({\rm e}^{\pi {\rm i}\mu}-{\rm e}^{-\pi {\rm i}\mu})^2}
 \left\{\!\!
 \begin{array}{ll}
 2^{-1}, & r=2\bmod 4 \\
 2^{-1/2} {\rm e}^{\mp\frac{\pi {\rm i} p}{2}}
 {\rm e}^{\mp\frac{3\pi {\rm i}}{4}}, &
 r=\pm 1\bmod 4 \end{array}\!\! \right\}
 \\& \times \sum_{\substack{b\in \Z/p\Z \\ \tilde{a}\in \Z/2p\Z}}
 {\rm e}^{-\frac{\pi {\rm i} r}{2p}\tilde{a}^2
 -\pi {\rm i} (r-1)\tilde{a}
 -\frac{2\pi {\rm i} \tilde{a}b}{p}-\pi {\rm i}{\mu}\tilde{a}}
 \hat{Z}_{2b}\big[S^3_p(K)\big]\Big|_{q\rightarrow {\rm e}^{\frac{2\pi {\rm i}}{r}}}.
\end{align*}

Using the formula~\eqref{torsion-surgery} with $t={\rm e}^{2\pi {\rm i}\mu}$, we indeed arrive at the conjectural formula~\eqref{CGP-Zhat-2mod4}, \eqref{CGP-Zhat-1mod4} or~\eqref{CGP-Zhat-3mod4}, depending on the value $r\bmod 4$, in the case of $M=S^3_p(K)$. For example, when $r=2\bmod 4$ the sum over $\tilde{a}$ reduces to the sum over $a=\tilde{a}\bmod p$:
\begin{align}
 \mathrm{N}_r\big(S^3_p(K),\omega\big) =
 \frac{1}{\sqrt{{|p|}}}
 \frac{\Delta_K \big({\rm e}^{2\pi {\rm i}\mu}\big)}{\big({\rm e}^{\pi {\rm i}\mu}-{\rm e}^{-\pi {\rm i}\mu}\big)^2}
 \!\!\sum_{\substack{b\in \Z/p\Z \\ a\in \Z/p\Z}}\!\!
 {\rm e}^{-\frac{\pi {\rm i} r}{2p}{a}^2
 -\pi {\rm i} a
 -\frac{2\pi {\rm i} ab}{p}-\pi {\rm i}{\mu}a}
 \hat{Z}_{2b}\big[S^3_p(K)\big]\Big|_{q\rightarrow {\rm e}^{\frac{2\pi {\rm i}}{r}}}.\!\!\!
 \label{p-surgery-CGP-Zhat-2mod4}
\end{align}
Similarly, for $r=1 \bmod 4$,
\begin{equation*}
c^{\mathrm{CGP}}_{a,b} =
\frac{1}{|p|} \frac{\Delta_K \big({\rm e}^{4\pi {\rm i} a/p}\big)}{({\rm e}^{2\pi {\rm i} a/ p} - {\rm e}^{- 2\pi {\rm i} a/ p})^2}
{\rm e}^{- 2\pi {\rm i} \frac{3\operatorname{sign} (p) - p}{4}}
\sum_{c,f=0}^{p-1} {\rm e}^{- \frac{2\pi {\rm i}}{p} ( - f^2 + ac + (b-f)c + (r-1) \frac{c^2}{4} )}
\end{equation*}
and for $r=3$ mod~4,
\begin{equation*}
c^{\mathrm{CGP}}_{a,b} =
\frac{1}{|p|} \frac{\Delta_K \big({\rm e}^{4\pi {\rm i} a/p}\big)}{\big({\rm e}^{2\pi {\rm i} a/ p} - {\rm e}^{- 2\pi {\rm i} a/ p}\big)^2}
{\rm e}^{2\pi {\rm i} \frac{3\operatorname{sign} (p) - p}{4}}
\sum_{c,f=0}^{p-1} {\rm e}^{- \frac{2\pi {\rm i}}{p} ( f^2 + ac + (b+f)c + (r+1) \frac{c^2}{4} )},
\end{equation*}
where we use the notation of~\eqref{coeffsummary}.

\begin{Example}
As a concrete example, consider $p=-3$ surgery on the right-handed trefoil $K = {\bf 3_1^r}$. In this case, $H_1 (S^3_p (K)) = \Z_3$ and, therefore, there are two independent $\hat Z$-invariants, which can be expressed in terms of the false theta-functions (cf.~\cite{Cheng:2018vpl,Gukov:2019mnk}):
\begin{gather*}
\hat Z_0 = q^{\frac{71}{72}}
\big( \tilde \Psi_{18}^{(1)} + \tilde \Psi_{18}^{(17)} \big)
= q + q^5 - q^6 - q^{18} + q^{20} + \cdots,
\\
\hat Z_{\pm 1} = - \frac{1}{2} q^{\frac{71}{72}}
\big( \tilde \Psi_{18}^{(5)} + \tilde \Psi_{18}^{(13)} \big)
= - \frac{1}{2} q^{4/3} \big( 1 + q^2 - q^7 - q^{13} + q^{23} + \cdots \big),
\end{gather*}
where the factor $\frac{1}{2}$ in the latter expression and ``$\pm$'' in its label appear precisely because we write these expressions in the {\it unfolded} form.
Evaluating the right-hand side of~\eqref{p-surgery-CGP-Zhat-2mod4} for various values of $r$, we find
{\samepage\begin{alignat*}{6}
&r=5\colon &&\phantom{-2}4.85591-6.4514 \mathrm{i},
&&r=13\colon &&-37.2754-1.28057 \mathrm{i},
\\
&r=6\colon &&\phantom{-2}5.30731-4.45336 \mathrm{i},
&&r=14\colon &&-11.4885+28.4093 \mathrm{i},
\\
&r=7\colon &&\phantom{-2}1.89035+3.49675 \mathrm{i},
&&r=15\colon &&-15.3891+13.8158 \mathrm{i},
\\
&r=9\colon &&-6.77162-0.394402 \mathrm{i},
&&r=17\colon &&-11.2632+37.6555 \mathrm{i},
\\
&r=10\colon \quad&&-24.2779+7.76375 \mathrm{i},\qquad\quad
&&r=18\colon \quad&&\phantom{-2}17.4965+18.5452 \mathrm{i},
\\
&r=11\colon &&-6.01733+3.60533 \mathrm{i},
&&r=19\colon &&\phantom{-2}59.3259+18.3538 \mathrm{i},
\\
& &&&&\cdots\cdots
\end{alignat*}
which match the corresponding values of $\mathrm{N}_r(S^3_{-3} ({\bf 3_1^r}))$.}
\end{Example}

\subsection{0-surgeries on knots}\label{sec:0surgknot}

In general, the invariant of a knot complement is usually written as
\begin{equation}
F_K (x,q) = \frac{1}{2} \sum_{\substack{m \ge 1 \\ \text{odd}}} f_m (q) \cdot \big(x^{\frac{m}{2}} - x^{- \frac{m}{2}}\big),
\label{FKgeneral}
\end{equation}
which after multiplying by $\big(x^{\frac{1}{2}} - x^{- \frac{1}{2}}\big)$ in the surgery formula gives
\begin{equation*}
\big(x^{\frac{1}{2}} - x^{- \frac{1}{2}}\big) F_K (x,q) = \frac{1}{2} \sum_{\substack{m \ge 1 \\ \text{odd}}} f_m (q) \cdot \big(x^{\frac{m}{2} + \frac{1}{2}} - x^{\frac{m}{2} - \frac{1}{2}} - x^{- \frac{m}{2} + \frac{1}{2}} + x^{- \frac{m}{2} - \frac{1}{2}} \big),
\end{equation*}
which means (with $n \in \Z$):
\begin{equation}
\text{Coeff}_{x^n} \Big[ \big(x^{\frac{1}{2}} - x^{- \frac{1}{2}}\big) F_K (x,q) \Big] =
\begin{cases}
f_{2n-1} - f_{2n+1} & \text{if}\quad n \ge 1, \\
- 2 f_{1} & \text{if}\quad n = 0, \\
f_{2|n|-1} - f_{2|n|+1} & \text{if}\quad n \le -1.
\end{cases}
\label{ZnFm}
\end{equation}
For the unknot we have $f_m (q) = \delta_{m,1}$, so that $\hat Z_n = \{ \ldots, 0,0,1,-2,1,0,0,\ldots \}$, where by $\hat{Z}_n$ we denote the invariant $\hat{Z}_\mathfrak{s}\big(S^3(K)\big)$ associated to the spin$^c$ structure encoded by the integer $2n$ on the knot (see Section~\ref{sec:combinatorial}).

\begin{Example}
The right-handed trefoil knot $K = {\bf 3_1^r}$: consider the $0$-framed trefoil and let $H_r=\{-(r-1),-(r-3),\ldots, (r-1)\}$ we have
\begin{equation*}
\N_r \big(({\bf 3_1^r})_\alpha\big) =
\frac{(-1)^{r-1}\xi^{9 (r-1)^2/4}}{\{ 2r \alpha \}}
\sum_{n \in H_r} \xi^{3n \alpha + \frac{3n^2}{4}}
\{ 2 \alpha + n \}
\end{equation*}
Therefore, we get
\begin{align*}
\N_r (S^3_0 ({\bf 3_1^r})) &=
\sum_{k \in H_r} d (\alpha + k)
\frac{(-1)^{r-1} \xi^{9 (r-1)^2/4}}{\{ 2r \alpha + 2r k \}}
\sum_{n \in H_r} \xi^{3n \alpha + 3nk + \frac{3n^2}{4}}\{ 2 \alpha + 2k + n \}
\\
&= (-1)^{r-1}\frac{ \xi^{9 (r-1)^2/4}}{ \{ r \alpha \} \{ 2r \alpha \}}
\sum_{k \in H_r} \sum_{n \in H_r}
(-1)^k \{ \alpha + k \} \{ 2 \alpha + 2k + n \} \xi^{3n \alpha + 3nk + \frac{3n^2}{4}}.
\end{align*}
On the other hand, for the right-handed trefoil knot $K = {\bf 3_1^r}$ we have $f_m = \epsilon_m q^{\frac{m^2 + 23}{24}}$, where
\begin{equation*}
\epsilon_m =
\begin{cases}
-1 & \text{if}\quad m \equiv 1\text{ or } 11\!\!\!\pmod {12},
\\
+1 & \text{if}\quad m \equiv 5\text{ or } 7\!\!\!\pmod {12},
\\
\phantom{-} 0 & \text{otherwise}.
\end{cases}
\end{equation*}
and so
\begin{equation*}
\hat Z_n =
\begin{cases}
\epsilon_{2n-1} q^{\frac{(2n-1)^2 + 23}{24}}
- \epsilon_{2n+1} q^{\frac{(2n+1)^2 + 23}{24}} & \text{if}\quad n \ge 1,
\\[1mm]
- 2 \epsilon_1 q &\text{if}\quad n = 0,
\\[1mm]
\epsilon_{2|n|-1} q^{\frac{(2|n|-1)^2 + 23}{24}}
- \epsilon_{2|n|+1} q^{\frac{(2|n|+1)^2 + 23}{24}} & \text{if}\quad n \le -1.
\end{cases}
\end{equation*}
Again, we verified numerically and analytically that the proposed relations hold.
\end{Example}

\subsection{Surgery on a link}

Let $L$ be the link $L_927$ in the Thistlethwaite table of links (see also Knot Atlas).
Let $M$ be obtained by $a,b,c\in \mathbb{N}$ integral surgery on the three components of $L$, where $a$ corresponds to the blue component, $b$ to the purple and $c$ to the green one in Figure~\ref{fig:L927}.
The linking matrix of $L$ is diagonal with entries $a$, $b$, $c$ so that a cohomology class $\omega\in H^1(M;\C/2\Z)$ is described by a three-uple $(\alpha,\beta,\gamma)$ with $\alpha\in \big\{\frac{2k}{a}, k=0,\ldots, a-1\big\}$, $\beta\in \{\frac{2k}{b}, \, k=0,\ldots, b-1\}$ and $\gamma\in \big\{\frac{2k}{c},\, k=0,\ldots, c-1\big\}$.
\begin{figure}
 \centering
 \includegraphics[width=4cm]{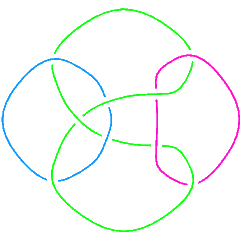}
 \caption{The $L_927$ link (Knotscape image).}
 \label{fig:L927}
\end{figure}

$L$ is one of the first links with the peculiar property that its multicolored Alexander polynomial is zero.
Thus we have $\N_2(M,\omega)=0$ for any $\omega\in H^1(M;\C/2\Z)$.

On the other hand, a direct computer-based calculation gives
\begin{equation*}
\mathrm{ADO}_3(L)(\alpha,\beta,\gamma)=\xi^{2\gamma}+\big(1-{\rm i}\sqrt{3}\big)+\xi^{-2\gamma}
\end{equation*}
so that in particular it does not depend on $\alpha$, $\beta$ and we will denote it $P(\gamma)$.
As a consequence, we have
\begin{align*}
\N_3(M,\omega)={}&\Delta_+^{-3}\bigg(\sum_{k_1\in \{-2,0,2\}} \xi^{\frac{a}{2}((\alpha+k_1)^2-(r-1)^2)}d(\alpha+k_1)\bigg)
\\
&\times \bigg(\sum_{k_2\in \{-2,0,2\}}
\xi^{\frac{b}{2}((\beta+k_2)^2-(r-1)^2)}d(\beta+k_2)\bigg)
\\
&\times \bigg(\sum_{k_3\in \{-2,0,2\}}
\xi^{\frac{c-1}{2}((\gamma+k_3)^2-(r-1)^2)}d(\gamma+k_3)P(\gamma+k_3)\bigg)
\end{align*}
(in the last factor of the above expression we used $c-1$ as the self linking of the green component is $1$ in the given diagram).
Choosing for instance $a=b=c=4$ and $\alpha=\beta=\gamma=\frac{1}{2}$ one can compute directly that $\N_3(M,\omega)\neq 0$. Since the torsion is $0$ as the multivariable Alexander polynomial is, then in this case the invariants $\hat{Z}$ should have infinite limits when $q\to \exp(2{\rm i}\frac\pi3)$.

\section[The relation between $\N_r$ and $\hat{Z}$]{The relation between $\boldsymbol{\N_r}$ and $\boldsymbol{\hat{Z}}$}
\label{sec:relation}

In this section, we present a more general and systematic discussion of the proposed relation.

\subsection{Gauss sum vs Laplace transform}
\label{sec:gauss-vs-laplace}
Let $M$ be the rational homology sphere obtained by a surgery on a framed link $L$ in $S^3$ with a linking matrix $B$. We will use the shorthand notations $x\equiv \{x_I\}_{I\in \vert}$, $x^\pm\equiv \big\{x_I^{\pm 1}\big\}_{I\in \vert}$. In particular, $K[[x^\pm]]$ denotes the space of formal Laurent series in $x_I^\pm$ with coefficients in $K$, considered as a module over the ring of Laurent polynomials $K[x^\pm]$.

As before, $\omega\in H^1(M;\C/2\Z)$ and $\mu_I=\omega(\mathfrak{m}_I)$, which we assume to be fixed. We also use notation
\begin{equation*}
 \xi\equiv {\rm e}^{\frac{\pi {\rm i}}{r}}
\end{equation*}
in what follows.

Let $K$ be a field extension over $\C$ (some of the relevant cases are $K=\C$, $K=\C\big(\big(q^{1/p}\big)\big)$, $K=\C\big(\big(q^{1/p}\big)\big)(x)$). By $K(t)'$ we will denote the localization of $K[t]$ by the subset of polynomials in $t$ non-vanishing at $t=1$. This is a subset of the field $K(t)$ of rational functions in $t$ closed under multiplication. One can then consider the following ring homomorphism:
\begin{Definition}
\label{def:t-limit-rational}
\begin{align*}
 \lim_{t\rightarrow 1}\colon \ K(t)'&\longrightarrow K,
 \\
 \frac{P(t)}{Q(t)}&\longmapsto \frac{P(1)}{Q(1)}.
\end{align*}
\end{Definition}

Similarly, by $K(x)'$ we will denote the localization of $K[x^\pm]$ by the subset of Laurent polynomials non-vanishing at $x=\xi^{\mu+k}$ for any $k\in H_r^\vert$. This is a subset of the field $K(x)$ of rational functions in $x$ closed under multiplication. We can then define the following $K$-linear ``$\omega$-twisted Gauss sum'' operation:
\begin{Definition}[$\omega$-twisted Gauss sum]
\label{def:gauss-omega}
\begin{align*}
 \Go_\omega\colon \ K(x)' & \longrightarrow K,
 \\
 \frac{P(x)}{Q(x)} & \longmapsto
 \sum\limits_{k\in H_r^\vert} \frac{P(\xi^{\mu+k})}{Q(\xi^{\mu+k})}
 \xi^{\frac{1}{2} (\mu+k)^TB(\mu+k)}.
\end{align*}
\end{Definition}

Denote $p:=4|\det B|$ and let $\C((q^{1/p}))\equiv \C\big[\big[q^{1/p},q^{-1/p}\big]$ be the field of fractions of $\C\big[\big[q^{1/p}\big]\big]$.
Then define the $\C\big(\big(q^{1/p}\big)\big)$-linear ``$\omega$-twisted Laplace transform'' on a subspace of $\C\big(\big(q^{1/p}\big)\big)[[x^\pm]]$ by the following formula:
\begin{Definition}[$\omega$-twisted Laplace transform]
\label{def:Laplace-omega}
\begin{equation}
\begin{aligned}[c]
 \Lo_\omega\colon\ \ \C\big(\big(q^{1/p}\big)\big)\big[\big[x^\pm\big]\big] & \longrightarrow \C\big(\big(q^{1/p}\big)\big),
 \\[2mm]
 \sum\limits_{\substack{\ell\in \Z^\vert \\ m\in \Z}} A_{\ell,m} q^{\frac{m}{p}} x^\ell &
 \longmapsto \sum\limits_{n\in \Z} q^{\frac{n}{p}} \sum\limits_{\substack{\ell\in \Z^\vert \\ m\in \Z \\ 4m-p\ell^TB^{-1}\ell=4n}} A_{\ell,m}\cdot \mathcal{C}_\ell^\omega,
\end{aligned}
\label{Laplace-omega-formula}
\end{equation}
where
\begin{equation*}
 \mathcal{C}_\ell^\omega=\frac{{\rm e}^{\frac{\pi {\rm i}\sigma}{4}}(r/2)^{V/2}}{|\det B|^{1/2}}
 \sum_{\tilde{a}\in \Z^\vert /2B\Z^{\vert}}
 {\rm e}^{-\frac{\pi {\rm i} r}{2} \tilde{a}^T B^{-1}\tilde{a}-\pi {\rm i} \tilde{a}^T B^{-1}(\ell+B(\mu+(r-1)\varepsilon))}.
\end{equation*}
We say that $\Lo_\omega$ is well defined if the interior sum in the right-hand side of~\eqref{Laplace-omega-formula} has a finite number of non-zero terms.
\end{Definition}
\begin{Remark}
 $\Lo_\omega$ restricted on $\C\big(\big(q^{1/p}\big)\big)\big[x^\pm\big]$ is well-defined.
\end{Remark}
\begin{Remark}
 Apart from $\omega\in H^1(M;\C/2\Z)$ (equivalently, $\mu\in (\C/2\Z)^\vert$ s.t.\ $B\mu=0\bmod 2\Z^\vert$), the operations $\Lo_\omega$ and $\Go_\omega$ also (implicitly) depend on $B$ and $r$.
\end{Remark}

Next we introduce the following limit operation (a morphism of $\C$-algebras).
\begin{Definition}
\begin{align*}
 \lim_{q\rightarrow {\rm e}^{\frac{2\pi {\rm i}}{r}}}\colon\ \C\big(\big(q^{1/p}\big)\big)
 & \longrightarrow \C,
 \\
 \sum\limits_{m} A_m q^{\frac{m}{p}} & \longmapsto\
 \lim_{q\rightarrow {\rm e}^{\frac{2\pi {\rm i}}{r}}}\sum\limits_{m} A_m q^{\frac{m}{p}}.
\end{align*}
We say that the operation is well defined if the power series in $q^{1/p}$ are convergent for $0<|q^{1/p}|<1$ and the limit, taken to the root of unity along the radial direction, exists and is finite.
\label{def:q-limit}
\end{Definition}

\begin{Remark}
 This operation can be extended to the polynomials/series with coefficients in $\C\big(\big(q^{1/p}\big)\big)$ by applying it coefficient-wise.
\end{Remark}

\begin{Proposition}
\label{prop:Go-Lo-in-limit}
Let $A(x,q)\in \C\big(\big(q^{1/p}\big)\big)\big[x^\pm\big]$ such that $\lim_{q\rightarrow {\rm e}^\frac{2\pi {\rm i}}{r}}A(x,q)\in \C\big[x^\pm\big]$ exists. Then
\begin{equation*}
 \Go_\omega \lim_{q\rightarrow {\rm e}^\frac{2\pi {\rm i}}{r}} A(x,q)
 =\lim_{q\rightarrow {\rm e}^\frac{2\pi {\rm i}}{r}}\Go_\omega A(x,q)
 =\lim_{q\rightarrow {\rm e}^\frac{2\pi {\rm i}}{r}}\Lo_\omega A(x,q).
\end{equation*}
\end{Proposition}

\begin{proof}
The first equality follows from the definition of $\Go_\omega$, which involves taking a finite linear combination of the evaluations at certain values of $x$. The second equality
is shown by applying the Gauss reciprocity formula~\eqref{reciprocity-alt} to individual monomials in $x$, the number of which is finite.\looseness=1
\end{proof}

\begin{Proposition}
\label{prop:gauss-sum-factor-out}
Let $A(x)\in K(x)'$, $B(x^r)\in K(x)'\cap K(x^r)$. Then
\begin{equation*}
 \Go_\omega(B(x^r)A(x))=
 B((-1)^{r-1} {\rm e}^{\pi {\rm i}\mu})\Go_\omega(A(x)).
\end{equation*}
\end{Proposition}

\begin{proof}
 Follows from the definition of $\Go_\omega$.
\end{proof}

\begin{Remark}
 The statement of the Proposition~\ref{prop:gauss-sum-factor-out} is directly extended to the series in $K'(x)((t))$ by applying it coefficient-wise.
\end{Remark}

We also define the following limiting operation on a subalgebra of $\C\big(\big(q^{1/p}\big)\big)((t))$:
\begin{Definition}
\begin{align*}
 \lim_{t\rightarrow 1}\colon\qquad\qquad \C\big(\big(q^{1/p}\big)\big)((t)) & \longrightarrow \C\big(\big(q^{1/p}\big)\big),
 \\
 \sum\limits_{n}\bigg(\sum\limits_{m} A_{n,m} q^{\frac{m}{p}}\bigg)t^n &
 \longmapsto \sum\limits_{m}\bigg(\sum\limits_{n}A_{n,m}\bigg)q^{\frac{m}{p}}.
\end{align*}
We say that the operation is well defined if there is a finite number of non-vanishing coefficients~$A_{n,m}$ for any fixed $m$.
\label{def:t-limit}
\end{Definition}

\begin{Remark}
When restricted on the subspace $C\big(\big(q^{1/p}\big)\big)(t)'\subset C\big(\big(q^{1/p}\big)\big)((t))$ and is well-defined, the result of the operation in Definition~\ref{def:t-limit} coincides with the result of the operation in Definition~\ref{def:t-limit-rational}.
\end{Remark}

With such definitions, consider the following diagram of (partially defined) algebra homomorphisms, where the dotted arrow means that we make the hypothesis that in the cases of interest the image of $\lim_{q\to {\rm e}^{2\pi {\rm i}/r}}$ map is contained in the subspace $\C(t)'$:
\begin{equation}
	\begin{tikzcd}[row sep=huge, column sep=huge]
	 \C\big(\big(q^{1/p}\big)\big)((t)) \ar[r,"\lim_{q\rightarrow {\rm e}^{\frac{2\pi {\rm i}}{r}}}"]
	 \ar[d,"\lim_{t\rightarrow 1}"]
	 &
	 \C((t)) \ar[r,dashed,bend left=30]
	 &
	 \C(t)'
	 \ar[d,"\lim_{t\rightarrow 1}"]
	 \ar[l,hook]
	 \\
	 \C\big(\big(q^{1/p}\big)\big)
	 \ar[rr,"\lim_{q\rightarrow {\rm e}^{\frac{2\pi {\rm i}}{r}}}"]
	 &
	 &
	\C.
\end{tikzcd}
\label{lim-transform-diagram}
\end{equation}

\begin{Proposition}
 When $r\neq 0\bmod 4$ and
 \begin{equation*}
 \ell =2b+B(s-\varepsilon)\bmod 2B\Z^\vert,
 \end{equation*}
 where $b$ and $s$ represent elements of $H_1(M;\Z)$ and $\operatorname{Spin}(M)$ respectively, the coefficients $\mathcal{C}^\omega_\ell$ in the Definition~$\ref{def:Laplace-omega}$ admit the following expression:
 \begin{align*}
 \mathcal{C}_\ell^\omega={}&\frac{{\rm e}^{\frac{\pi {\rm i}\sigma}{4}}r^{V/2}}{|H_1(M;\Z)|}
 \\
 &\times\begin{cases}
 {\rm e}^{\frac{\pi {\rm i}}{4}(5\sigma-2\Tr B)}
 \sum\limits_{a,f\in H_1(M;\Z)}
 {\rm e}^{2\pi {\rm i}\left(-\frac{r-1}{4}\lk(a,a)
 +\lk(a,f-b)-\frac{1}{2}\omega(a) +\lk(f,f)
 -\frac{1}{4}\mu(M,s)+\frac{1}{2}\right)},
 \\[-3mm]
 \hspace{100mm}
 r=1 \bmod 4,
 \\
 2^{V/2} |H_1(M;\Z)|^{1/2}\sum\limits_{a\in H_1(M;\Z)}
 {\rm e}^{-\frac{\pi {\rm i}r}{2} q_s(a)-2\pi {\rm i} \lk(a,b)-\pi {\rm i} \omega(a)},
 \qquad \
 r=2 \bmod 4,
 \\[3mm]
 {\rm e}^{\frac{\pi {\rm i}}{4}(-5\sigma+2\Tr B)} \sum\limits_{a,f\in H_1(M;\Z)}
 {\rm e}^{2\pi {\rm i}\left( -\frac{r+1}{4}\lk(a,a)
 -\lk(a,f+b)-\frac{1}{2}\omega(a) -\lk(f,f)
 +\frac{1}{4}\mu(M,s)+\frac{1}{2}\right)},
 \\[-3mm]
 \hspace{100mm}
 r=3 \bmod 4.
 \end{cases}
 \end{align*}
 \label{prop:Laplace-omega-explicit}
\end{Proposition}
\begin{proof}

 Is contained in Section~\ref{sec:plumbed}.
\end{proof}

Assuming the identification $\xi^{\alpha_I} = x_I$, one can consider (see \eqref{ADO-Nr-knot} and~\eqref{ADO-Nr-link}):
\begin{equation*}
 \xi^{-\frac{1}{2}\alpha^TB\alpha} \N_r(L_\alpha)
 \in \C(x)' \subset \C(x).
\end{equation*}

\begin{Proposition}
\label{prop:Gauss-omega-CGP}
\begin{equation}
\frac{1}{\Delta_+^{b_+}\Delta_-^{b_-}}
\Go_\omega\bigg(\xi^{-\frac{1}{2}\alpha^TB\alpha} \N_r(L_\alpha)\cdot \frac{\prod_{I\in \vert}\big(x_I-x^{-1}_I\big)}{\prod_{I\in \vert}\big(x_I^r-x^{-r}_I\big)}\bigg)=
\N_r(M,\omega).
\label{eq:Gauss-omega-CGP}
\end{equation}
\end{Proposition}

\begin{proof}
After using the identification $x_I=\xi^{\alpha_I}$, taking into accound the formula~\eqref{modified-q-dim-formula} for the modified quantum dimensions, and writing explicitly the action of $\Go_\omega$ according to the formula in the Definition~\ref{def:gauss-omega}, the left-hand side of~\eqref{eq:Gauss-omega-CGP} becomes the surgery formula~\eqref{CGP-surgery-formula} for~$\N_r(M,\omega)$.
\end{proof}

\begin{Proposition}
\label{prop:Laplace-omega-Zhat}
\begin{align*}
&\Lo_\omega\bigg(F_L\big(x^2,q\big)\cdot \prod_{I\in \vert}\big(x_I-x^{-1}_I\big)\bigg)={}
\frac{{\rm e}^{\frac{\pi {\rm i}\sigma}{4}}r^{V/2}}{|H_1(M;\Z)|}
\\
&\times\sum_{b\in H_1(M;\Z)}
 \begin{cases}
 {\rm e}^{\frac{\pi {\rm i}}{4}(5\sigma-2\Tr B)}
 \!\!\sum\limits_{a,f\in H_1(M;\Z)}\!\!\!
 {\rm e}^{2\pi {\rm i}\left(-\frac{r-1}{4}\lk(a,a)
 +\lk(a,f-b)-\frac{1}{2}\omega(a) +\lk(f,f)
 -\frac{1}{4}\mu(M,s)+\frac{1}{2} \right)},
 \\[-3mm]\hspace{100mm}
 r=1 \bmod 4,
 \\[1mm]
 2^{V/2} |H_1(M;\Z)|^{1/2}\sum\limits_{a\in H_1(M;\Z)}
 {\rm e}^{-\frac{\pi {\rm i}r}{2} q_s(a)-2\pi {\rm i} \lk(a,b)-\pi {\rm i} \omega(a)},\qquad
\  r=2 \bmod 4,
 \\[2mm]
 {\rm e}^{\frac{\pi {\rm i}}{4}(-5\sigma+2\Tr B)}
\!\! \sum\limits_{a,f\in H_1(M;\Z)}\!\!\!
 {\rm e}^{2\pi {\rm i}\left( -\frac{r+1}{4}\lk(a,a)
 -\lk(a,f+b)-\frac{1}{2}\omega(a)
 -\lk(f,f) +\frac{1}{4}\mu(M,s)+\frac{1}{2}\right)},
 \\[-3mm]\hspace{100mm}
 r=3 \bmod 4, \\
 \end{cases}
 \\[1mm]
& \times (-1)^{b_+}q^{\frac{\Tr B-3\sigma}{4}} \hat{Z}_{\sigma(b,s)}
\end{align*}
when the left-hand side is well defined.
\end{Proposition}

\begin{proof}
Using the expansion~\eqref{FL-expansion}, the left-hand side reads:
\begin{equation}
 \Lo_\omega\bigg(\sum_{\ell\in \Z^\vert}F_\ell x^\ell\bigg)=
 \sum_\ell C_\ell^\omega F_\ell q^{-\frac{\ell^T B^{-1}\ell}{4}}=\sum_{b\in \Z^\vert/B\Z^\vert} \sum_{\substack{\ell=2b+B(s-\varepsilon)\\\bmod 2B\Z^{V}}} \mathcal{C}_\ell^\omega F_{\ell}q^{-\frac{\ell^tB^{-1}\ell}{4}},
\end{equation}
where in the first equality we used Definition~\ref{def:Laplace-omega}. In the second equality, we split the sum over $\ell$ into a sum over $\Z^\vert/B\Z^\vert\cong H_1(M,\Z)$ and the sum over $\ell$ with fixed values modulo $2B\Z^\vert$. Proposition~\ref{prop:Laplace-omega-explicit} provides an explicit formula for the coefficients $\mathcal{C}_\ell^\omega$, which depend only on the value $\ell$ modulo $2B\Z^\vert$. Combined with the surgery formula~\eqref{Zhat-surgery-RHS} for $\hat{Z}$ in the case of a rational homology sphere it gives us the formula in the statement of the proposition.
\end{proof}
Note that $t$-regularization can be understood as the following $\C$-linear map:
\begin{Definition}
\label{def:t-reg}
\begin{align*}
 (\,\cdot\,)^t\colon\qquad\qquad\quad K[[x^\pm]] & \longrightarrow K[x^\pm][[t]],
 \\
 f(x)=\sum\limits_{\ell\in \Z^\vert} f_\ell x^\ell & \longmapsto f^t(x)=\sum\limits_{m\geq 0} \bigg(\sum\limits_{\ell\colon\|\ell\|=m}f_\ell x^\ell\bigg)t^m,
\end{align*}
where $\|\cdot\|$ is the $L^1$ norm.
\end{Definition}

The target space of this operation is an integral domain, and its ring of fractions is a subfield of $K(x)((t))$. In particular,
\begin{equation*}
 \frac{F^t_L\big(x^2,q\big)}{F^t_L\big(x^{2r},q^r\big)} \in \C\big(\big(q^{1/p}\big)\big)(x)((t)).
\end{equation*}
The Conjecture~\ref{conj:FADO2} then states that \smash{$\lim_{q\rightarrow {\rm e}^{{2\pi {\rm i}}/{r}}}$} takes it to an element in
\[
\C(x)(t)'\subset \C(x)(t)\subset \C(x)((t)),
\]
and \smash{$\lim_{t\rightarrow 1}$} then takes it further to
\begin{equation*}
 \xi^{-\frac{1}{2}\alpha^TB\alpha+\frac{(r-1)^2 \Tr B}{2}}\N_r(L_\alpha) \in \C(x)'\subset \C(x).
\end{equation*}

\begin{Theorem}
\label{thm:CGP-Zhat}
Let $M=S^3(L)$ and $W_L(x,q)$ be such that Conjecture~$\ref{conj:FADO2}$, part $(a)$, holds with the following additional assumptions:
\begin{enumerate}[label=$(\roman*)$]\itemsep=0pt
 \item $\Lo_\omega W_L\big(x^2,q\big)\prod_{I}\big(x_I-x_I^{-1}\big)$ is well-defined and \smash{$\lim_{t\rightarrow 1}\lim_{q\rightarrow {\rm e}^{\frac{2\pi {\rm i}}{r}}}=\lim_{q\rightarrow {\rm e}^{\frac{2\pi {\rm i}}{r}}}\lim_{t\rightarrow 1}$} when applied to the ratio
 \begin{equation*}
 \frac{\Lo_\omega W^t_L\big(x^2,q\big)\prod_{I}\big(x_I-x_I^{-1}\big)}{\big(\Lo_\omega|_{r=1} W^t_L\big(x^2,q\big)\prod_{I}\big(x_I-x_I^{-1}\big)\big)|_{q\rightarrow q^r}},
 \end{equation*}
 that is the maps in the diagram~\eqref{lim-transform-diagram} commute when restricted to this element in the top left corner.
 \item $\exists \alpha(q,t)\in \C\big(\big(q^{1/p}\big)\big)((t))$ such that $\lim_{q\rightarrow {\rm e}^{2\pi {\rm i}}}\alpha(q,t)W^t_L\big(x^2,q\big)\in \C[x^\pm]((t))$ exists and is non-zero.
\end{enumerate}
Then there exist $W_\mathfrak{s}(q)\in 2^{-c}q^{\Delta}\Z[[q]]$, $\mathfrak{s}\in \mathrm{Spin}^c(M)$ such that
\begin{equation*}
 \N_r(M,\omega)=
\lim_{q\rightarrow {\rm e}^{\frac{2\pi {\rm i}}{r}}}
\frac{\sum\limits_{b\in H_1(M;\Z)}C^r_{\omega,b}W_{\sigma(b,s)}(q)}{\sum\limits_{b\in H_1(M;\Z)}{\rm e}^{2\pi {\rm i} \omega(b)}W_{\sigma(b,s)}(q^r)},
\end{equation*}
where\vspace{1mm}
\begin{align*}
C^r_{\omega,b}:=
\frac{1}{|H_1(M;\Z)|}
\begin{cases}
 \sum\limits_{a,f\in H_1(M;\Z)}
 {\rm e}^{2\pi {\rm i}\left(-\frac{r-1}{4}\lk(a,a)
 +\lk(a,f-b)-\frac{1}{2}\omega(a)
 +\lk(f,f)-\frac{1}{4}\mu(M,s)+\frac{1}{2}\right)},
 \\[-3mm]\hspace{87mm}
 r=1 \bmod 4,
 \\
 |H_1(M;\Z)|^{1/2}\sum\limits_{a\in H_1(M;\Z)}
 {\rm e}^{-\frac{\pi {\rm i}r}{2} q_s(a)-2\pi {\rm i} \lk(a,b)-\pi {\rm i} \omega(a)},
\ \ \,
 r=2 \bmod 4,
 \\[4mm]
 \sum\limits_{a,f\in H_1(M;\Z)}
 {\rm e}^{2\pi {\rm i}\left(-\frac{r+1}{4}\lk(a,a)
 -\lk(a,f+b)-\frac{1}{2}\omega(a) -\lk(f,f)
+\frac{1}{4}\mu(M,s) +\frac{1}{2} \right)},
 \\[-3mm]\hspace{87mm}
 r=3 \bmod 4.
 \end{cases}
\end{align*}
\end{Theorem}

\begin{proof}
The Conjecture~\ref{conj:FADO2}, part $(a)$, states that
\begin{equation*}
 \xi^{-\frac{1}{2}\alpha^TB\alpha+\frac{(r-1)^2 \Tr B}{2}}\N_r(L_\alpha)=
 \lim_{t\rightarrow 1}
 \lim_{q\rightarrow {\rm e}^\frac{2\pi {\rm i}}{r}}\frac{W_L^t\big(x^2;q\big)}{W^t_L\big(x^{2r};q^r\big)}.
\end{equation*}
Multiplying both sides by $\prod_I\big(x_I-x_I^{-1}\big)/\big(x_I^r-x_I^{-r}\big)$ and applying $\Go_\omega$ we have
\begin{gather}
 \Go_\omega\bigg(\xi^{-\frac{1}{2}\alpha^TB\alpha+\frac{(r-1)^2 \Tr B}{2}}\N_r(L_\alpha)
 \prod_I\frac{\big(x_I-x_I^{-1}\big)}{\big(x_I^r-x_I^{-r}\big)}\bigg)\nonumber
 \\ \qquad
 {}= \lim_{t\rightarrow 1}\Go_\omega
 \lim_{q\rightarrow {\rm e}^\frac{2\pi {\rm i}}{r}}\frac{W^t_L\big(x^2,q\big)\prod_I\big(x_I-x_I^{-1}\big)}{W^t_L\big(x^{2r},q^r\big)\prod_I\big(x_I^r-x_I^{-r}\big)},
\label{thm-CGP-Zhat-intermediate-eq}
\end{gather}
where we could bring $\Go_\omega$ inside the limit since its definition involves taking a finite sum of evaluations of the rational functions in $x$ that appear in the coefficients of the series. By~Proposition~\ref{prop:Gauss-omega-CGP}, the left-hand side of the equation~\eqref{thm-CGP-Zhat-intermediate-eq} gives the left-hand side of the equation in the statement of the theorem, up to a simple factor.
In the right-hand side, inside the limit \smash{$\lim_{t\rightarrow 1}$} we have
\begin{align}
\Go_\omega &\lim_{q\rightarrow {\rm e}^\frac{2\pi {\rm i}}{r}}\frac{W_L^t\big(x^2,q\big)\prod_I\big(x_I-x_I^{-1}\big)}{W_L^t\big(x^{2r};q^r\big)\prod_I\big(x_I^r-x_I^{-r}\big)}
 = \Go_\omega \frac{\lim_{q\rightarrow {\rm e}^\frac{2\pi {\rm i}}{r}}W_L^t\big(x^2,q\big)\alpha(q^r,t)\prod_I\big(x_I-x_I^{-1}\big)}{\lim_{q\rightarrow {\rm e}^\frac{2\pi {\rm i}}{r}}W_L^t\big(x^{2r},q^r\big)\alpha(q^r,t)\prod_I\big(x_I^r-x_I^{-r}\big)}\nonumber
 \\
 &=
 \frac{\lim_{q\rightarrow {\rm e}^\frac{2\pi {\rm i}}{r} }\Lo_\omega W_L^t\big(x^2,q\big)\alpha(q^r,t)\prod_I\big(x_I-x_I^{-1}\big)}{\lim_{q\rightarrow {\rm e}^{2\pi {\rm i}}}W_L^t\big({\rm e}^{2\pi {\rm i}\mu},q\big)\alpha(q,t)\prod_I\big({\rm e}^{\pi {\rm i}\mu_I}-{\rm e}^{-\pi {\rm i}\mu_I}\big)},
 \label{thm-rhs-before-limit-exchange}
\end{align}
where in the first equality we used the the assumption (ii) of the theorem. In the second equality we used the results of the Propositions~\ref{prop:gauss-sum-factor-out} and~\ref{prop:Go-Lo-in-limit}.

Using Proposition~\ref{prop:Go-Lo-in-limit} for $r=1$, we have
\begin{gather*}
\xi^{\frac{1}{2}\mu^TB\mu}\lim_{q\rightarrow {\rm e}^{2\pi {\rm i}}}
W_L^t\big({\rm e}^{2\pi {\rm i}\mu},q\big)\alpha(q,t)\prod_I\big({\rm e}^{\pi {\rm i}\mu_I}-{\rm e}^{-\pi {\rm i}\mu_I}\big)
\\ \qquad
{}= \lim_{q\rightarrow {\rm e}^{2\pi {\rm i}}}\Go_\omega|_{r=1}W_L^t\big(x^2,q\big)\alpha(q,t)\prod_I\big(x_I-x_I^{-1}\big)
\\ \qquad
{}= \lim_{q\rightarrow {\rm e}^{2\pi {\rm i}}}\Lo_\omega|_{r=1}W_L^t\big(x^2,q\big)\alpha(q,t)\prod_I\big(x_I-x_I^{-1}\big).
\end{gather*}
It follows that the right-hand side of~\eqref{thm-CGP-Zhat-intermediate-eq} is equal to
\begin{gather}
\xi^{\frac{1}{2}\mu^TB\mu}
\lim_{t\rightarrow 1}\lim_{q\rightarrow {\rm e}^\frac{2\pi {\rm i}}{r}}
\frac{\Lo_\omega W^t_L\big(x^2,q\big)\prod_{I}\big(x_I-x_I^{-1}\big)}{\big(\Lo_\omega|_{r=1} W^t_L\big(x^2,q\big)\prod_{I}\big(x_I-x_I^{-1}\big)\big)\big|_{q\rightarrow q^r}}\nonumber
\\ \qquad
{}= \xi^{\frac{1}{2}\mu^TB\mu}
 \lim_{q\rightarrow {\rm e}^\frac{2\pi {\rm i}}{r}}
 \lim_{t\rightarrow 1}
 \frac{\Lo_\omega W^t_L\big(x^2,q\big)\prod_{I}\big(x_I-x_I^{-1}\big)}{\big(\Lo_\omega|_{r=1} W^t_L\big(x^2,q\big)\prod_{I}\big(x_I-x_I^{-1}\big)\big)\big|_{q\rightarrow q^r}}\nonumber
 \\ \qquad
{}= \xi^{\frac{1}{2}\mu^TB\mu}
 \lim_{q\rightarrow {\rm e}^\frac{2\pi {\rm i}}{r}}
 \frac{\Lo_\omega W_L\big(x^2,q\big)\prod_{I}\big(x_I-x_I^{-1}\big)}{\big(\Lo_\omega|_{r=1} W_L\big(x^2,q\big)\prod_{I}\big(x_I-x_I^{-1}\big)\big)\big|_{q\rightarrow q^r}},
 \label{thm-rhs-after-limit-exchange}
\end{gather}
where we used the assumption (i).

Let $W_\mathfrak{s}(q)$ be defined through $W_L(x,q)$ in the same way as $\hat{Z}_\mathfrak{s}(q)$ is defined through $F_L(x,q)$, i.e., surgery formula~\eqref{Zhat-surgery-RHS}. From Proposition~\ref{prop:Laplace-omega-Zhat} with $r=1$, we then have
\begin{gather}
 \Lo_\omega|_{r=1}W_L\big(x^2,q\big)\prod_I\big(x_I-x_I^{-1}\big)\nonumber
 \\ \qquad
 {}=\frac{(-1)^{b_+} {\rm e}^{\frac{\pi {\rm i}}{2}(3\sigma-\Tr B)}}{|H_1(M;\Z)|}
 \sum\limits_{a,b,f\in H_1(M;\Z)}
 {\rm e}^{2\pi {\rm i}\left(\lk(a,f-b)-\frac{1}{2}\omega(a) +\lk(f,f)
 -\frac{1}{4}\mu(M,s)+\frac{1}{2} \right)} W_{\sigma(b,s)}\nonumber
 \\ \qquad
{}= (-1)^{b_+} {\rm e}^{\frac{\pi {\rm i}}{2}(3\sigma-\Tr B)} {\rm e}^{\frac{\pi {\rm i}}{2} \mu^T B\mu} \sum\limits_{b\in H_1(M;\Z)} {\rm e}^{2\pi {\rm i}\left(\omega(b)
 -\frac{1}{4}\mu(M,s)+\lk(b,b)+\frac{1}{2}\right)} W_{\sigma(b,s)}\nonumber
 \\ \qquad
{}= (-1)^{b_+} {\rm e}^{\frac{\pi {\rm i}}{2}(3\sigma-\Tr B)} {\rm e}^{\frac{\pi {\rm i}}{2} \mu^T B\mu} \sum\limits_{b\in H_1(M;\Z)}
 {\rm e}^{2\pi {\rm i}\omega(b)} W_{\sigma(b,s)}\Big|_{q\rightarrow q {\rm e}^{-2\pi {\rm i}}},
 \label{thm-CGP-Zhat-r1-implication}
\end{gather}
where we have used the fact that, according to the formula~\eqref{Zhat-surgery-RHS}, the overall rational power shift of $q$-series $W_{\sigma(b,s)}$ modulo 1 is given by
\begin{equation*}
 \frac{3\sigma-\Tr B}{4}-\frac{(s-\varepsilon)^TB(s-\varepsilon)}{4}-b^TB^{-1}b=
 \frac{1}{2}+\frac{\mu(M,s)}{4}-\lk(b,b)\bmod 1.
\end{equation*}

Using~\eqref{thm-CGP-Zhat-r1-implication} in the right-hand side of~\eqref{thm-rhs-after-limit-exchange} and also using the Proposition~\ref{prop:Laplace-omega-Zhat} for general $r$, we conclude that the right-hand side of~\eqref{thm-CGP-Zhat-intermediate-eq} gives the the right-hand side of the equation in the statement of the theorem, up to a simple phase factor. Taking care of the phase factors on both sides of~\eqref{thm-CGP-Zhat-intermediate-eq} concludes the proof of the first part of the statement of the theorem.

If Conjecture~\ref{conj:FADO2}, part $(b)$, holds, we take $W_L(x,q)=F_L(x,q)$, and then by construction $W_\mathfrak{s}(q)=\hat{Z}_\mathfrak{s}(q)$.
\end{proof}

\begin{Theorem}
\label{thm:CGP-Zhat-torsion}
Let $M=S^3(L)$ and $W_L(x,q)$ be such that Conjectures~$\ref{conj:FADO2}$ and~$\ref{conj:FADO3}$, parts~$(a)$, hold with the following additional assumption:
\begin{enumerate}[label=$(\roman*)$]\itemsep=0pt
 \item $\Lo_\omega W^t_L\big(x^2,q\big)\prod_{I}\big(x_I-x_I^{-1}\big)$ is well defined and \smash{$\lim_{t\rightarrow 1}\lim_{q\rightarrow {\rm e}^{\frac{2\pi {\rm i}}{r}}}=\lim_{q\rightarrow {\rm e}^{\frac{2\pi {\rm i}}{r}}}\lim_{t\rightarrow 1}$} when applied to
 \begin{equation*}
 \Lo_\omega W^t_L\big(x^2,q\big)\prod_{I}\big(x_I-x_I^{-1}\big)
 \end{equation*}
 that is the maps in the diagram~\eqref{lim-transform-diagram} commute when restricted to this element in the top left corner.
\end{enumerate}
Then there exist $W_\mathfrak{s}(q)\in 2^{-c}q^{\Delta}\Z[[q]]$, $\mathfrak{s}\in \mathrm{Spin}^c(M)$ such that\begin{equation*}
\N_r(M,\omega)
=\mathcal{T}(M,[\omega])
\lim_{q\rightarrow {\rm e}^\frac{2\pi {\rm i} }{r}}\sum_{b\in H_1(M;\Z)}C^r_{\omega,b} W_{\sigma(b,s)},
\end{equation*}
where $C^r_{\omega,b}$ are the same as in Theorem~$\ref{thm:CGP-Zhat}$.
Moreover, if the parts $(b)$ of Conjectures~$\ref{conj:FADO2}$ and~$\ref{conj:FADO3}$ also hold and $F_L(x,q)$ satisfies the assumption $(i)$, then one can take $W_\mathfrak{s}(q)=\hat{Z}_\mathfrak{s}(q)$.
\end{Theorem}

\begin{proof}
One can follow the proof of the Theorem~\ref{thm:CGP-Zhat} since, from the Conjecture~\ref{conj:FADO3}, the assumption $(iv)$ of that theorem automatically holds with $\alpha(q,t)=1$. Moreover, the Conjecture~\ref{conj:FADO3} provides a relation between \smash[b]{$\lim_{t\rightarrow 1}\lim_{q\rightarrow 1} W^t_L\big(x^2,q\big)$} which appears in the denominator of~\eqref{thm-rhs-before-limit-exchange} and the Alexander--Conway function.
Using the surgery formula for the torsion (see Appendix~\ref{app:torsion}), we arrive at the statement of the theorem.
\end{proof}

\begin{Remark}
In the case of plumbing surgeries, the assumption (ii) holds if and only if the plumbing is \textit{weakly negative definite}, meaning the inverse of the linking matrix, $B^{-1}$, restricted on the vertices of degree $>2$ is negative definite (cf.~\cite{Gukov:2019mnk}).
\end{Remark}
\begin{Example}\label{ex:theoremworks}
Let $M$ be a rational homology sphere obtained by surgery over a plumbing link corresponding to a ``$Y$-shaped'' plumbing graph i.e. one formed by a single trivalent vertex corresponding to an unknot with strictly negative framing, three $1$-valent vertices and some $2$-valent vertices.
Then as shown in Appendix~\ref{app:commutlimits} the  hypothesis of Theorem~\ref{thm:CGP-Zhat-torsion} is satisfied.
So this provides an infinite family of examples in which Theorem~\ref{thm:CGP-Zhat-torsion} holds. This family of examples overlaps with the ones considered in~\cite{MR4400935,fuji2021witten,Gukov:2019mnk,Gukov:2016njj}.
\end{Example}

\subsection[Generalization to $b_1\geq 0$]{Generalization to $\boldsymbol{b_1\geq 0}$}

In this section, we briefly list modifications one needs to do in Section~\ref{sec:gauss-vs-laplace} in order to generalize the results to the case of general $b_1\geq 0$.
We will follow the conventions of Section~\ref{sec:Zhat-def-b1}. In~particular, we fix $U\in {\rm SL}(V,\Z)$ such that
\begin{equation*}
 UBU^{T}=
 \begin{pmatrix}
 B' & 0 \\
 0 & 0
 \end{pmatrix}\!,
\end{equation*}
where $\det B'\neq 0$ and use the following notations:
\begin{equation*}
\begin{pmatrix} \ell' \\ \ell'' \end{pmatrix} :=U\ell,\qquad
\begin{pmatrix} \mu' \\ \mu'' \end{pmatrix} :=\big(U^T\big)^{-1}\mu,\qquad
 \begin{pmatrix} s' \\ s'' \end{pmatrix} :=\big(U^T\big)^{-1}s,\qquad
\begin{pmatrix} \varepsilon' \\ \varepsilon'' \end{pmatrix} :=\big(U^T\big)^{-1}\varepsilon.
\end{equation*}

We redefine $p:=4|\det B'|$.
Definitions~\ref{def:t-limit-rational},~\ref{def:gauss-omega},~\ref{def:q-limit},
\ref{def:t-limit},~\ref{def:t-reg} and Propositions~\ref{prop:gauss-sum-factor-out},~\ref{prop:Gauss-omega-CGP} do not need to be modified. Definition~\ref{def:Laplace-omega} is generalized to

\begin{Definition}
Define the ``$\omega$-twisted Laplace transform'' on a subspace of $\C\big(\big(q^{1/p}\big)\big)[[x^\pm]]$ by the following formula:
\begin{align*}
 \Lo_\omega\colon\ \C\big(\big(q^{1/p}\big)\big)\big[\big[x^\pm\big]\big] & \longrightarrow \C\big(\big(q^{1/p}\big)\big),
 \\[2mm]
 \sum\limits_{\substack{\ell\in \Z^\vert \\ m\in \Z}} A_{\ell,m} q^{\frac{m}{p}} x^\ell &
 \longmapsto
 \sum\limits_{n\in \Z} q^{\frac{n}{p}} \sum\limits_{\substack{\ell\in \Z^\vert \\ m\in \Z \\ 4m-p(\ell')^T(B')^{-1}\ell'+2p\sum_I|\ell''_I||\varepsilon''_I|=4n}} A_{\ell,m}\cdot \mathcal{C}_\ell^\omega,
\end{align*}
where
\begin{align*}
 \mathcal{C}_\ell^\omega={}&\frac{{\rm e}^{\frac{\pi {\rm i}\sigma}{4}}(r/2)^{
 \frac{V-b_1}{2}} r^{b_1}}{|\det B'|^{1/2}}
 {\rm e}^{\pi {\rm i}(\ell'')^T(\epsilon''+\frac{\mu''}{r})}
 \delta_{\ell''=0\bmod r}
 \\
&\times \sum_{\tilde{a}\in \Z^{V-b_1} /2B'\Z^{V-b_1}}
 {\rm e}^{\pi {\rm i}\left(-\frac{r}{2} \tilde{a}^T (B')^{-1}\tilde{a}-\tilde{a}^T (B')^{-1}(\ell'+B'(\mu'+(r-1)\varepsilon'))
 \right)}.
\end{align*}
We say that $\Lo_\omega$ is well defined if the interior sum in the right-hand side of~\eqref{Laplace-omega-formula} has finite number of non-zero terms.
\label{def:Laplace-omega-b1}
\end{Definition}
With such modified definitions Proposition~\ref{prop:Go-Lo-in-limit}, then still holds by the similar argument. Propositions~\ref{prop:Laplace-omega-explicit} and~\ref{prop:Laplace-omega-Zhat} are modified respectively to the following two:

\begin{Proposition}
 When $r\neq 0\bmod 4$ and
 \begin{gather*}
 \ell' = 2b'+B'(s'-\varepsilon')\bmod 2B'\Z^{V-b_1},
 \\
 \ell'' = \mathrm{LCM}(r,2)m\in \Z^{b_1},
 \end{gather*}
 where $b'$ and $s$ represent elements of $\Tor H_1(M;\Z)$ and $\mathrm{Spin}(M)$, respectively, the coefficients~$\mathcal{C}^\omega_\ell$ in the Definition~$\ref{def:Laplace-omega-b1}$ admit the following expression:
 \begin{align*}
 \mathcal{C}_\ell^\omega={}&\frac{{\rm e}^{\frac{\pi {\rm i}\sigma}{4}}r^\frac{V+b_1}{2}}{|\Tor H_1(M;\Z)|} {\rm e}^{\frac{2\pi {\rm i}\omega''(m)}{\mathrm{GCD}(r,2)}}
 \\
&\times \begin{cases}
 {\rm e}^{\frac{\pi {\rm i}}{4}(5\sigma-2\Tr B)}
 \sum\limits_{a,f\in \Tor H_1(M;\Z)}
 {\rm e}^{2\pi {\rm i}\left( -\frac{r-1}{4}\lk(a,a)
 +\lk(a,f-b)-\frac{1}{2}\omega(a) +\lk(f,f)
 -\frac{1}{4}\mu(M,s)+\frac{1}{2} \right)},
 \\[-3mm]\hspace{109mm}
 r=1 \bmod 4,
 \\
 2^{\frac{V-b_1}{2}}
 |\Tor H_1(M;\Z)|^{1/2}\sum\limits_{a\in \Tor H_1(M;\Z)}
 {\rm e}^{-\frac{\pi {\rm i}r}{2} q_s(a)-2\pi {\rm i} \lk(a,b)-\pi {\rm i} \omega(a)},\ \ \
 r=2 \bmod 4,
 \\[3mm]
 {\rm e}^{\frac{\pi {\rm i}}{4}(-5\sigma+2\Tr B)}
 \sum\limits_{a,f\in \Tor H_1(M;\Z)}
 {\rm e}^{2\pi {\rm i}\left( -\frac{r+1}{4}\lk(a,a)
 -\lk(a,f+b)-\frac{1}{2}\omega(a) -\lk(f,f)
 +\frac{1}{4}\mu(M,s)+\frac{1}{2} \right)},
 \\[-3mm]\hspace{109mm}
 r=3 \bmod 4.
 \end{cases}\!\!\!\!
 \end{align*}
 \label{prop:Laplace-omega-explicit-b1}
\end{Proposition}

\begin{proof}
It is contained in Section~\ref{sec:plumbed}.
\end{proof}

\begin{Proposition}\label{prop:Laplace-omega-Zhat-b1}
\begin{align*}
\Lo_\omega&\bigg(F_L\big(x^2,q\big)\cdot \prod_{I\in \vert}\big(x_I-x^{-1}_I\big)\bigg)=
\frac{{\rm e}^{\frac{\pi {\rm i}\sigma}{4}}r^{\frac{V+b_1}{2}}}{|\Tor H_1(M;\Z)|}
\sum_{\substack{b\in\Tor H_1(M;\Z)\\ m\in \Z^{b_1}}}{\rm e}^{\frac{2\pi {\rm i}\omega''(m)}{\mathrm{GCD}(r,2)}}
\\
&\times \begin{cases}
 {\rm e}^{\frac{\pi {\rm i}}{4}(5\sigma-2\Tr B)}
 \sum\limits_{a,f\in \Tor H_1(M;\Z)}
 {\rm e}^{2\pi {\rm i}\left( -\frac{r-1}{4}\lk(a,a)
 +\lk(a,f-b)-\frac{1}{2}\omega(a) +\lk(f,f)
 -\frac{1}{4}\mu(M,s)+\frac{1}{2} \right)},
\\[-3mm]\hspace{110mm}
 r=1 \bmod 4,
 \\
 2^{\frac{V-b_1}{2}}
 |\Tor H_1(M;\Z)|^{1/2}\sum\limits_{a\in \Tor H_1(M;\Z)}
 {\rm e}^{-\frac{\pi {\rm i}r}{2} q_s(a)-2\pi {\rm i} \lk(a,b)-\pi {\rm i} \omega(a)},\quad
 r=2 \bmod 4 ,
 \\[3mm]
{\rm e}^{\frac{\pi {\rm i}}{4}(-5\sigma+2\Tr B)}
 \sum\limits_{a,f\in \Tor H_1(M;\Z)}
 {\rm e}^{2\pi {\rm i}\left(-\frac{r+1}{4}\lk(a,a)
 -\lk(a,f+b)-\frac{1}{2}\omega(a) -\lk(f,f)
 +\frac{1}{4}\mu(M,s)+\frac{1}{2}\right)},
 \\[-3mm]\hspace{110mm}
 r=3 \bmod 4
 \end{cases}
 \\
&\times (-1)^{b_+}q^{\frac{\Tr B-3\sigma}{4}} \hat{Z}_{\sigma(b\oplus \frac{mr}{\mathrm{GCD}(r,2)},s)}
\end{align*}
when the left-hand side is well defined.
\end{Proposition}
\begin{proof}
 Follows from the definition of $\Lo_\omega$, Proposition~\ref{prop:Laplace-omega-explicit-b1} and the conditional definition of $\hat{Z}_{\sigma(b,s)}$ through $F_L$ (equation~\eqref{Zhat-b1-def}).
\end{proof}
\begin{Remark}
 In Proposition~\ref{prop:Laplace-omega-Zhat-b1} and the theorems below, $\hat{Z}_{\sigma(b'\oplus b'',s)}$ is understood as a~parti\-cular representative in \smash{$2^{-c} q^{\Delta}\Z[[q]]\subset\C\big(\big(q^{1/p}\big)\big)$}, rather then an element of the quotient over the subspace \smash{$\big(1-q^{\mathrm{LCM}(2,\mathrm{GCD}(b''))}\big)\C\big(\big(q^{1/p}\big)\big)$}. This representative is fixed by the choice of the surgery link $L$ in the definition of $\hat{Z}$ via $F_L$ by the formula~\eqref{Zhat-b1-def}.
\end{Remark}

Theorems~\ref{thm:CGP-Zhat} and~\ref{thm:CGP-Zhat-torsion} are then modified respectively to the following two, with proofs following similar arguments:

\begin{Theorem}
\label{thm:CGP-Zhat-b1}
Let $M=S^3(L)$ and $W_L(x,q)$ be such that Conjecture~$\ref{conj:FADO2}$, part $(a)$, holds with the following additional assumptions:
\begin{enumerate}[label=$(\roman*)$]\itemsep=0pt
 \item $\Lo_\omega W_L\big(x^2,q\big)\prod_{I}\big(x_I-x_I^{-1}\big)$ is well-defined and \smash{$\lim_{t\rightarrow 1}\lim_{q\rightarrow {\rm e}^{\frac{2\pi {\rm i}}{r}}}=\lim_{q\rightarrow {\rm e}^{\frac{2\pi {\rm i}}{r}}}\lim_{t\rightarrow 1}$} when applied to the ratio
 \begin{equation*}
 \frac{\Lo_\omega W^t_L\big(x^2,q\big)\prod_{I}\big(x_I-x_I^{-1}\big)}{\left(\Lo_\omega|_{r=1} W^t_L\big(x^2,q\big)\prod_{I}\big(x_I-x_I^{-1}\big)\right)|_{q\rightarrow q^r}},
 \end{equation*}
 that is the maps in the diagram~\eqref{lim-transform-diagram} commute when restricted to this element in the top left corner.
 \item $\exists \alpha(q,t)\in \C\big(\big(q^{1/p}\big)\big)((t))$ such that $\lim_{q\rightarrow {\rm e}^{2\pi {\rm i}}}\alpha(q,t)W^t_L\big(x^2,q\big)\in \C[x^\pm]((t))$ exists and is non-zero.
\end{enumerate}
Then there exist $W_\mathfrak{s}(q)\in 2^{-c}q^{\Delta}\Z[[q]]$, $\mathfrak{s}\in \mathrm{Spin}^c(M)$ such that
\begin{equation*}
 \N_r(M,\omega)=
\lim_{q\rightarrow {\rm e}^{\frac{2\pi {\rm i}}{r}}}
\frac{\sum\limits_{\substack{b\in \Tor H_1(M;\Z)\\ m\in \Z^{b_1} }}C^r_{\omega,b,m}W_{\sigma(b\oplus \frac{mr}{\mathrm{GCD}(r,2)},s)}}{\sum\limits_{b\in H_1(M;\Z)}{\rm e}^{2\pi {\rm i} \omega(b)}W_{\sigma(b,s)}(q^r)},
\end{equation*}
where
\begin{align*}
C^r_{\omega,b,m}:={}&
\frac{r^{b_1} {\rm e}^{\frac{2\pi {\rm i}\omega''(m)}{\mathrm{GCD}(r,2)}}}{|\Tor H_1(M;\Z)|}
 \\
&\times \begin{cases}
 \sum\limits_{a,f\in \Tor H_1(M;\Z)}
 {\rm e}^{2\pi {\rm i}\left( -\frac{r-1}{4}\lk(a,a)
 +\lk(a,f-b)-\frac{1}{2}\omega(a) +\lk(f,f)
 -\frac{1}{4}\mu(M,s)+\frac{1}{2} \right)},
 \\[-3mm]\hspace{100mm}
 r=1 \bmod 4,
 \\
 |\Tor H_1(M;\Z)|^{1/2}\sum\limits_{a\in \Tor H_1(M;\Z)}
 {\rm e}^{-\frac{\pi {\rm i}r}{2} q_s(a)-2\pi {\rm i} \lk(a,b)-\pi {\rm i} \omega(a)},\quad
 r=2 \bmod 4,
 \\[2mm]
 \sum\limits_{a,f\in \Tor H_1(M;\Z)}
 {\rm e}^{2\pi {\rm i}\left(-\frac{r+1}{4}\lk(a,a)
 -\lk(a,f+b)-\frac{1}{2}\omega(a) -\lk(f,f)
 +\frac{1}{4}\mu(M,s)+\frac{1}{2}\right)},
 \\[-3mm]\hspace{100mm}
 r=3 \bmod 4.
 \end{cases}\!\!\!\!
\end{align*}
Moreover, if the parts $(b)$ of Conjectures~$\ref{conj:FADO2}$ and~$\ref{conj:FADO3}$ also hold and $F_L(x,q)$ satisfies the assumption~$(i)$, then one can take $W_\mathfrak{s}(q)=\hat{Z}_\mathfrak{s}(q)$.
\end{Theorem}

\begin{Theorem}
\label{thm:CGP-Zhat-torsion-b1}
Let $M=S^3(L)$ and $W_L(x,q)$ be such that Conjectures~$\ref{conj:FADO2}$ and~$\ref{conj:FADO3}$, parts~$(a)$, hold with the following additional assumption:
\begin{enumerate}[label=$(\roman*)$]\itemsep=0pt
 \item $\Lo_\omega W^t_L\big(x^2,q\big)\prod_{I}\big(x_I-x_I^{-1}\big)$ is well defined and \smash{$\lim_{t\rightarrow 1}\lim_{q\rightarrow {\rm e}^{\frac{2\pi {\rm i}}{r}}}=\lim_{q\rightarrow {\rm e}^{\frac{2\pi {\rm i}}{r}}}\lim_{t\rightarrow 1}$} when applied to
 \begin{equation*}
 \Lo_\omega W^t_L\big(x^2,q\big)\prod_{I}\big(x_I-x_I^{-1}\big)
 \end{equation*}
 that is the maps in the diagram~\eqref{lim-transform-diagram} commute when restricted to this element in the top left corner.
\end{enumerate}
Then there exist $W_\mathfrak{s}(q)\in 2^{-c}q^{\Delta}\Z[[q]]$, $\mathfrak{s}\in \mathrm{Spin}^c(M)$ such that
\begin{equation*}
\N_r(M,\omega)
=\mathcal{T}(M,[\omega])
\lim_{q\rightarrow {\rm e}^\frac{2\pi {\rm i} }{r}}\sum\limits_{\substack{b\in H_1(M;\Z)\\ m\in \Z^{b_1} }}C^r_{\omega,b,m}W_{\sigma(b\oplus \frac{mr}{\mathrm{GCD}(r,2)},s)},
\end{equation*}
where $C^r_{\omega,b,m}$ are the same as in Theorem~$\ref{thm:CGP-Zhat-b1}$.
Moreover, if the parts $(b)$ of Conjectures~$\ref{conj:FADO2}$ and~$\ref{conj:FADO3}$ also hold and $F_L(x,q)$ satisfies the assumption $(i)$, then one can take $W_\mathfrak{s}(q)=\hat{Z}_\mathfrak{s}(q)$.
\end{Theorem}

\begin{Example}\label{ex:theoremworks-b1}
Let $M$ be obtained by surgery over a plumbing link corresponding to a ``$Y$-shaped'' plumbing graph, i.e., one formed by a single trivalent vertex corresponding to an unknot with strictly negative framing, three $1$-valent vertices and some $2$-valent vertices.
Then, as shown in Appendix~\ref{app:commutlimits}, under a certain open condition on the linking matrix, the first hypothesis of Theorem~\ref{thm:CGP-Zhat-torsion-b1} is satisfied. Hypothesis $(iii)$ is satisfied as shown in Section~\ref{sub:goodlinks} and $\hat{Z}$ is defined via $F_L$. So this provides an infinite family of examples in which Theorem~\ref{thm:CGP-Zhat-torsion-b1} holds.
Note, the family of 3-manifolds in this example overlaps with families treated in connection with WRT invariants in~\cite{MR4400935,fuji2021witten,Gukov:2019mnk,Gukov:2016njj}.

\end{Example}

\subsection{Generalization to the spin case}\label{sec:spincase}

In this section, we briefly mention the relation between $\hat{Z}$ and the spin version of the $\N_r$ invariant for $r=0\bmod 4$ which was defined in~\cite{blanchet2014non}. We will omit any details and intermediate calculations, as they are analogous to the non-spin case. The spin version of the invariant depends on the choice of $\C/2\Z$-spin structure, which can be understood as a homotopy class of lifts
\begin{equation}
 \begin{tikzcd}
 & B\Spin(3,\C/2\Z)\ar[d] \\
 M \ar[ur,dashed]\ar[r] &
 B\operatorname{SO}(3),
 \end{tikzcd}
 \label{spincmod2z-structure}
\end{equation}
where $\Spin(3,\C/2\Z)$ is an extension of $\operatorname{SO}(3)$ by $\C/2\Z$ equipped with discrete topology:
\begin{equation*}
 (\C/2\Z)_\text{discrete}\longrightarrow \Spin(3,\C/2\Z)\equiv \Spin(3)\times_{\Z_2}(\C/2\Z)_\text{discrete}
 \longrightarrow \operatorname{SO}(3),
\end{equation*}
the vertical map in~\eqref{spincmod2z-structure} corresponds to the projection, and the horizontal map is the classifying map of the bundle of orthonormal frames in the tangent bundle,
which are non-canonically parametrized by $H^1(M;\C/2\Z)$. To formulate the relation, it will be useful to consider the canonical map
\begin{equation*}
 \tilde{\sigma}\colon \ H^1(M;\C/2\Z)\times \Spin(M) \longrightarrow \Spin(M,\C/2\Z)
\end{equation*}
similar to the map $\sigma$. It is induced by the $\Z_2$ quotient map of the product $\Spin(3)\times(\C/2\Z)_\text{discrete}$, taking into account that $B(\C/2\Z)_\text{discrete}=K(\C/2\Z,1)$. Then, for general $b_1$ the relation for $r=0\bmod 4$ reads
\begin{gather*}
\begin{split}
 N^\Spin_r(M,\tilde{\sigma}(\omega,s))
= {}&\frac{(-1)^{\mu(M,s)} r^{b_1} \CT([\omega])}{\sqrt{|\Tor H_1(M;\Z)|}}
\\
 &\times\!\!\!\lim_{q\rightarrow {\rm e}^{\frac{2\pi {\rm i}}{r}}}
 \!\!\sum_{\substack{a,b\in \Tor H_1(M;\Z) \\ m\in \Z^{b_1}}}\!\!\!\!\!\!
 {\rm e}^{-\frac{\pi {\rm i} r}{2}\lk(a,a)-2\pi {\rm i}\lk(a,b)-\pi {\rm i}\omega(a)+\pi {\rm i}\omega''(m)} \hat{Z}_{\sigma(b\oplus \frac{mr}{2},s)}(M),
 \end{split}\!\!\!
\end{gather*}
where $\mu(M,s)$ is the Rokhlin invariant, for which we used the surgery formula~\eqref{Rokhlin-mod4}. Note that modulo two it is actually independent of spin structure $s$.
For rational homology spheres, the relation simplifies to
\begin{gather*}
 N^\Spin_r(M,\tilde{\sigma}(\omega,s))
 \\ \qquad
 {}= \frac{(-1)^{\mu(M,s)} \CT([\omega])}{\sqrt{|H_1(M;\Z)|}}
 \lim_{q\rightarrow {\rm e}^{\frac{2\pi {\rm i}}{r}}}
 \sum_{{a,b\in H_1(M;\Z)}}
 {\rm e}^{-\frac{\pi {\rm i} r}{2}\lk(a,a)-2\pi {\rm i}\lk(a,b)-\pi {\rm i}\omega(a)}
 \hat{Z}_{\sigma(b,s)}(M).
\end{gather*}

\section{\texorpdfstring{$\boldsymbol{\hat{Z}}$ and $\boldsymbol{\N_r}$}{CGP and GPPV invariants} as decorated TQFTs}\label{sec:TQFT}

Both $\hat{Z}$ and $\N_r$ are examples of topological invariants that, on the one hand, behave well under cutting and gluing (admit surgery formulae) and, on the other hand, depend on additional data (decoration).
In this section, which is of more speculative nature, we describe how this additional data should behave under cutting and gluing as well as its transformation under the operation of taking the limit $q \to {\rm e}^{2\pi {\rm i} / r}$ that conjecturally relates $\hat{Z}$ and $\N_r$ invariants.

\subsection{Hilbert space on a torus}

In table~\eqref{NZhatcomparison} we omitted the comparison of Hilbert spaces $\CH_{\text{BCGP}} \big(T^2\big)$ and $\CH_{\text{GPPV}} \big(T^2\big)$ in the two theories on a 2-torus.\footnote{BCGP stands for the TQFT built out of the CGP invariants by Blanchet, Costantino, Geer and Patureau~\cite{blanchet2016non}.} These spaces control surgery operations and deserve a separate section. Note, in a semisimple TQFT with a finite-dimensional Hilbert space, we have
\begin{equation*}
\CH \big(T^2\big) = K^0 \big( \text{MTC} \big(S^1\big) \big).
\end{equation*}
Neither of the two theories we are trying to compare fits into this standard paradigm of the Reshetikhin--Turaev construction, which nevertheless can be used as a good motivation.
In~particular, it is important that both BCGP and GPPV theories are ``decorated'' TQFTs, with additional structure $\omega$ or $\mathfrak{s} \in \operatorname{Spin}^c (M) / \Z_2$ originating from the equivariance under $\mathbb{T}_{\C} \subset G_{\C}$, the maximal torus of $G_{\C} = {\rm SL}(2,\C)$.

Taking into account these extra structures, the first order approximation to $\CH \big(T^2\big)$ in our theories is as follows. From cutting and gluing (surgery) rules, we infer that the space $\CH_{\text{GPPV}} \big(T^2\big)$ has basis $| n, x \rangle_a$ where $n \in \Z$, $x \in \C^*$ and $a$ is a relative spin$^c$ structure. It is often convenient to replace $x$ by a dual variable $m \in \Z$, so that the Weyl group of $G_{\C} = {\rm SL}(2,\C)$ maps $| n,m \rangle_a \mapsto |{-}n, -m \rangle_a$. Therefore, for each given spin$^c$ structure (= choice of background), we have
\begin{equation}
\CH_{\text{GPPV}}^{(a)} \big(T^2\big) \cong \C\bigg[\frac{\Lambda \times \Lambda^{\vee}}{W}\bigg],
\label{HGPPVnaive}
\end{equation}
where, as usual, by $\C[S]$ we denote the space of complex valued functions\footnote{In principle, one has to impose a certain asymptotic behavior condition on the functions on the lattice. This condition should be coherent with a certain continuity condition on the allowed functions on the dual space, related by Fourier transform. In this work we do not specify such conditions.} on set~$S$. Note that, compared to the Hilbert space $\CH_{\text{WRT}} \big(T^2\big) = \C[{\Lambda}/({W \times k \Lambda})]$, the space of states~\eqref{HGPPVnaive} has two copies of the lattice, i.e., corresponds to a {\it toroidal} algebra, and has no cut-off due to the level~\cite{Chun:2019mal,Gukov:2019mnk}. The latter property is, of course, anticipated {\it a priori} since the invariants $\hat Z_b (M;q)$ depend on $q$, generic with $|q|<1$. From the Kazhdan--Lusztig correspondence and the theory of $W$-algebras, it is also natural to attribute this ``doubling'' to a larger symmetry associated with the action of two (quantum) groups, such that~\eqref{HGPPVnaive} is basically a product of root lattices for $G$ and its Langlands dual ${}^L G$.

At a similar level of approximation, for a fixed choice of the ``decoration''/equivariant parameter $\alpha$, we have
\begin{equation*}
\CH_{\text{BCGP}}^{(\alpha)} \big(T^2\big) \cong \C[H_r],\qquad
H_r:=\{1-r,3-r,\ldots,r-1\},
\end{equation*}
which is a direct consequence of the corresponding statement for categories $\mathcal{C} = \oplus_{\bar \alpha \in \C / 2\Z} \mathcal{C}_{\bar \alpha}$~\cite{blanchet2016non}.
It is very well known that many affine algebras and VOAs at ``critical level'' have large center. The same is true about quantum groups at roots of unity. Very often, the center has a nice geometric meaning as the space of functions on some variety (moduli space) $\mathcal{M}$, and in the case at hand $\mathcal{M} = \mathbb{T}_{\C}$.
Below we refine these descriptions of $\CH_{\text{BCGP}} \big(T^2\big)$ and $\CH_{\text{GPPV}} \big(T^2\big)$ by looking at these spaces from various angles.

Before continuing with $\Sigma = T^2$, it is instructive to pause for a moment and consider simpler cases of $\Sigma = S^2$ or $S^2$ with two points removed. In a 3d TQFT associated to $\mathcal{C}$, objects in $\mathcal{C}$ correspond to line operators whereas their non-trivial extensions are represented by non-trivial junctions, i.e., local operators where two line operators meet, illustrated in Figure~\ref{fig:junction}. The space of such local operators is the space of states on $\Sigma = S^2 \setminus \{ p_1, p_2 \}$; in particular, it is non-trivial for BCGP theory indicating the non-semisimple nature of the TQFT. As a special case, the space of local operators at the junction of two trivial lines is simply $\CH \big(S^2\big)$, and for GPPV theory its structure is conveniently encoded in the unreduced\footnote{This is the version that includes the contribution of the Cartan component of the adjoint chiral, e.g., $\hat Z_0^{(\text{unred})} \big(S^3\big) = \frac{1}{(t^2 q^2;q)_{\infty}}$ for $G=\operatorname{SU}(2)$. The reduced version is obtained by removing this contribution, i.e., via multiplying by $(qt;q)_{\infty}$. Moreover, there is also a factor of $(1;q)_{\infty}$ in the numerator which requires regularization. Sometimes it is simply removed. And, sometimes only the zero mode is removed so that $(1;q)_{\infty}$ is replaced by $(q;q)_{\infty}$, as in~\eqref{S1S2refined}.} version of the $\hat Z$-invariant for~$S^1 \times S^2$:
\begin{equation}
\frac{1}{2} \frac{(x;q)_{\infty} (q;q)_{\infty} \big(x^{-1};q\big)_{\infty}}{(qtx;q)_{\infty} (qt;q)_{\infty} \big(qtx^{-1};q\big)_{\infty}},
\label{S1S2refined}
\end{equation}
where the refinement variable $t$ corresponds to homological-type grading in the context of categorification.
In general, in a Rozansky--Witten theory the space $\CH \big(S^2\big) = \C [X]$ encodes the geometry of the target space $X$, and the partition function on $S^1 \times S^2$ can be identified with the Hilbert series of $X$. In the case of $\hat Z$ theory, there is a manifest symmetry between the numerator and the denominator of the above expression. It is typical for a partition function of the cotangent bundles, $X = T^* M$, where the numerator and denominator correspond to the fiber and base, respectively. Indeed, ``half'' of the expression~\eqref{S1S2refined} is precisely the Hilbert series of the affine Grassmannian, whereas the complete expression~\eqref{S1S2refined} is the Hilbert series of a model for $T^* \mathrm{Gr}_G$ that was used in~\cite{Gukov:2020lqm}.

\begin{figure}[ht]
	\centering
	\includegraphics[width=0.60\textwidth]{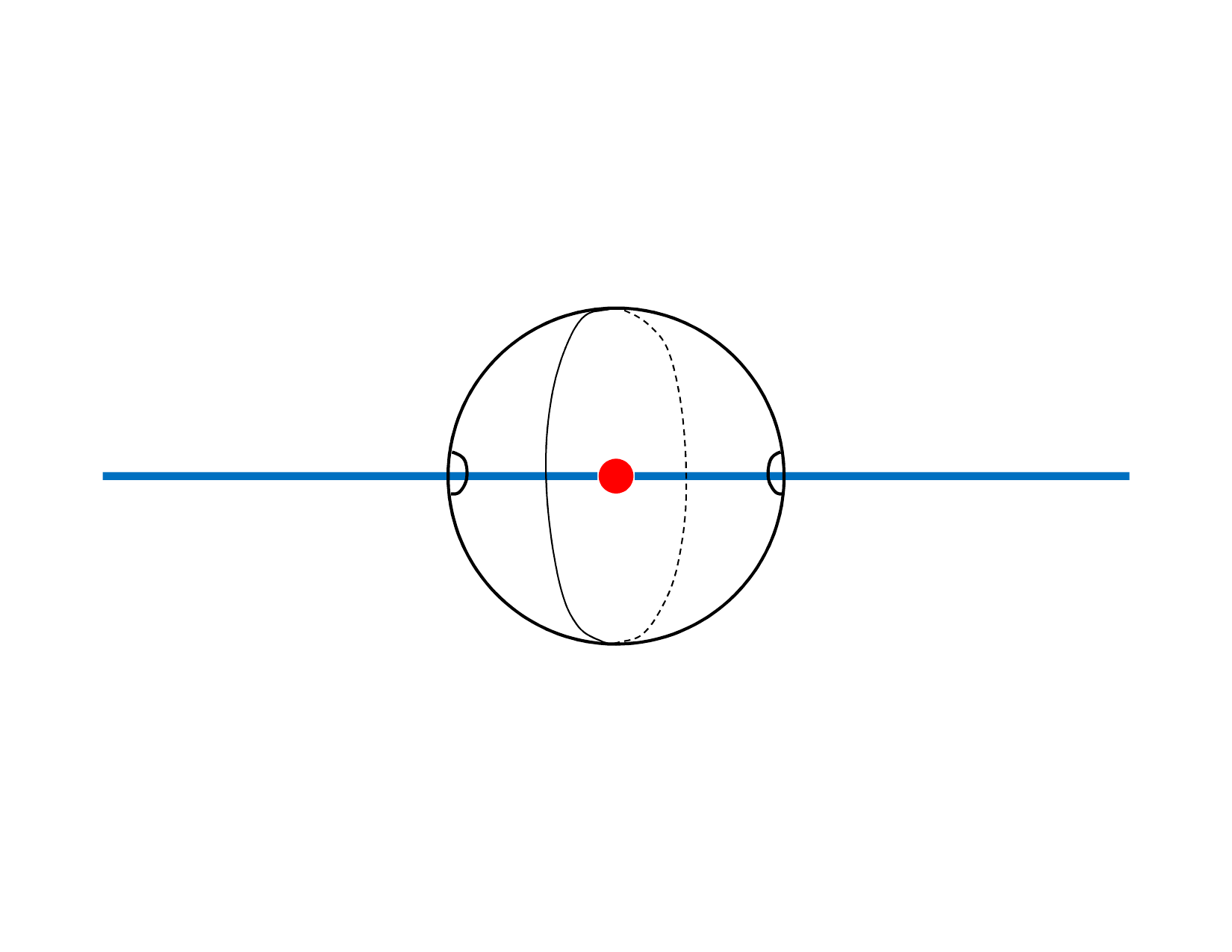}
	\caption{A sphere with two punctures surrounding a local operator at the junction of two line operators.}
	\label{fig:junction}
\end{figure}

Recall, that the affine Grassmannian for $G = \operatorname{SU}(2)$ has two connected components, with a~similar Morse cell complex. In each component, the transverse slices are labeled by a pair of non-negative integers, $m$ and $n$. In particular, the transverse slices with $n=0$ form a family of hyper-K\"ahler manifolds with quaternionic dimension $m^2$ that provide a finite-dimensional approximation to $\mathrm{Gr}_G$. Their Hilbert series is
\begin{equation*}
\frac{\big(q^{m+1};q\big)_{m}}{(qx;q)_{m} (q;q)_{m} \big(qx^{-1};q\big)_{m}}.
\end{equation*}
In the limit $m \to \infty$, we recover $\mathrm{Gr}_G$ itself (or, more precisely, one of its connected components) and the denominator of the formula~\eqref{S1S2refined} at $t=1$.

Now, returning to $\Sigma = T^2$, let $D^b \operatorname{Coh}(X)$ be the (bounded) derived category of coherent sheaves on $X$. Then, the proposal of~\cite{Gukov:2020lqm} that the GPPV TQFT can be equivalently formulated as the Rozansky--Witten theory with the target space $T^* \mathrm{Gr}_G$ implies, among other things,
\begin{equation}
\CH_{\text{GPPV}} \big(T^2\big) \cong K^0 \big( D^b_{\C^* \times \mathbb{T}} \operatorname{Coh}(T^* \mathrm{Gr}_G) \big),
\label{GPPVviaTGrG}
\end{equation}
where $\mathrm{Gr}_G$ is the affine Grassmannian of $G$ or, rather, the affine Grassmannian of $G_{\C}$ to be more precise. If the left-hand side, here is understood as the Hilbert~\eqref{HGPPVnaive} used in the cutting-and-gluing (surgery) formulae~\cite{Gukov:2019mnk}, then~\eqref{GPPVviaTGrG} is a priori not obvious at all. We shortly demonstrate that it is indeed the case, but also point out that producing similar arguments for other ingredients of GPPV theory is necessary to establish the equivalence of different definitions, such as the explicit integral formula for plumbed manifolds and the mathematically rigorous (but extremely hard to compute) definition based on the Rozansky--Witten theory with the affine Grassmannian as the target space.

Returning to the proof of~\eqref{GPPVviaTGrG}, first one needs to clarify what exactly is meant by $T^* \mathrm{Gr}_G$ which, for general $G$, is a singular infinite-dimensional space. Luckily, the ``space of triples''~\cite{Braverman:2016wma,Braverman:2016pwk,Nakajima:2017bdt} provides the right candidate for the total space of this cotangent bundle. Let $\text{St}_G$ be the affine Grassmannian analogue of the Steinberg variety. Then~\cite{MR2135527}
\begin{equation*}
K^{G(O)} (\text{St}_G) \cong \C \big[\mathbb{T}_{\C} \times {}^L\mathbb{T}_{\C}\big]^W
\end{equation*}
and
\begin{equation*}
\operatorname{Spec} K \big(D^b \operatorname{Coh}^{G(O)}_{\mathrm{St}_G} (T^* \mathrm{Gr}_G)\big) \cong
\operatorname{Spec} K^{G(O)} (\mathrm{Gr}_G) \cong \frac{\mathbb{T}_{\C} \times {}^L\mathbb{T}_{\C}}{W},
\end{equation*}
where $O = \C [[t]]$, $F = \C ((t))$, and $\mathrm{Gr}_G = G(F)/G(O)$. Enhancing the equivariant $G(O)$-action to $G(O) \times \operatorname{U}(1)$ action, where $\operatorname{U}(1)$ acts by loop rotation, corresponds to a non-commutative deformation (quantization) of this space. Up to a two-fold cover, this is precisely the quantization of the space $\mathcal{M}_{\text{flat}} \big(G_{\C},T^2\big) \cong \frac{\mathbb{T}_{\C} \times \mathbb{T}_{\C}}{W}$, thus demonstrating that the left-hand side of~\eqref{GPPVviaTGrG} given by~\eqref{HGPPVnaive} indeed is equal to the right-hand side of~\eqref{GPPVviaTGrG} computed geometrically.

\subsubsection{Quantization}

The Hilbert space of Chern--Simons theory and its close cousins on a surface $\Sigma$ can be obtained by quantizing a suitable ``phase space'' $M$, that in many interesting examples can be realized as a submanifold in the moduli space $\mathcal{M}_H (G,\Sigma) \cong \mathcal{M}_{\text{flat}} (G_{\C},\Sigma)$. For example, a ``real slice'' $M = \mathcal{M}_{\text{flat}} (G_{\mathbb{R}},\Sigma)$ that corresponds to a real form $G_{\mathbb{R}}$ of $G_{\C}$ gives the Hilbert space of ``$G_{\mathbb{R}}$ Chern--Simons theory'' (closely related to the Teichm\"uller theory~\cite{MR3204520}) whereas $M = \mathcal{M}_H (G,\Sigma)$ gives the Hilbert space of $\hat Z$ theory (which provides a non-perturbative definition to what one might call a ``$G_{\C}$ Chern--Simons theory''). In all of these cases, we can represent $\CH (\Sigma)$ as a~Hom-space ($=$ space of open strings) in the category of branes on $\mathcal{M}_H (G,\Sigma)$~\cite{MR2672467}:
\begin{equation}
\CH (\Sigma) = \operatorname{Hom} (\CB' , \CB_{cc}),
\label{HomBB}
\end{equation}
where $\CB_{cc}$ is a rank-1 brane that carries a line bundle of curvature $c_1 (\mathcal{L}) = \omega$ and $\CB'$ is supported on $M$. Note, at the current stage of development, the definition of the Fukaya category does not include coisotropic objects. However, as pointed out in~\cite{MR2672467}, the space~\eqref{HomBB} is best computed (and also defined!) in the derived category of coherent sheaves on $\mathcal{M}_H (G,\Sigma)$. Indeed, both $\CB'$ and $\CB_{cc}$ are objects in $D^b \operatorname{Coh}(\mathcal{M}_H (G,\Sigma))$, which is mathematically well defined and is much easier to work with than the Fukaya category. Then, in the case of compact $G$, the space~\eqref{HomBB} recovers\footnote{See~\cite{MR2672467} for details.} the standard result~\cite{MR1065677,Witten:1988hf} of the geometric quantization~\cite{MR0395610,MR0294568,MR1461545}, and in the complex case of $G_{\C}$ recovers the Hilbert space of the GPPV TQFT~\eqref{HGPPVnaive}. For compact $G$, there is a large body of work, that includes~\cite{MR3370620,MR1669720,MR1048605}, showing that the action of the mapping class group is equivalent to the one in the Reshetikhin--Turaev construction~\cite{reshetikhin1991invariants}, whereas in the complex case of $G_{\C}$ the mapping class group action has been studied in detail only in genus~1~\cite{Chun:2019mal,Gukov:2019mnk}.

In the case of BCGP invariants, we have~\cite[Remark~5.10]{blanchet2016non}
\begin{equation*}
\dim \CH_{\text{BCGP}} (\Sigma) =
\begin{cases}
r' & \text{if}\quad g=1, \\
r^{3g-3} & \text{if}\quad g>1~\text{and}~r~\text{is odd}, \\
\frac{r^{3g-3}}{2^{g-1}} & \text{if}\quad g>1~\text{and}~r~\text{is even}.
\end{cases}
\end{equation*}
Recall that $r'$ is $r$ (resp. $\frac{r}{2}$) when $r$ is odd (resp. even).
The choice of a background (``decoration'') that appears in $\N_r$ and $\hat Z$ invariants is of the same type as in $\mathbb{T}_{\C}$-crossed modules, thus allowing to describe both TQFTs also in the language of $\mathbb{T}_{\C}$-crossed modules.

\subsubsection{A prototypical example}
\label{sec:U1CS}

Consider bosonic Chern--Simons theory with gauge group $\operatorname{U}(1)$ and level $r\in \Z_+$. It has a~topological $\C/\Z$ (0-form) global symmetry. On the level of the path integral, on a closed 3-manifold~$M$, it can be coupled to a flat\footnote{Restriction of connections being flat can be interpreted as considering discrete topology on $\C/\Z$ symmetry group.} background $\C/\Z$ connection
\begin{equation*}
 \omega\in H^1(M;\C/\Z)\cong \Hom(H_1(M;\Z),\C/\Z) \cong
 \Hom\big(H^2(M;\Z),\C/\Z\big)
\end{equation*}
as follows:
\begin{equation}
 Z_{\operatorname{U}(1)_r}(M,\omega)=
 \int DA \exp\bigg\{ \frac{{\rm i}r}{2\pi}\int_{M} AdA +2\pi {\rm i}\omega(c_1) \bigg\},
 \label{Z-U1-CS-coupled}
\end{equation}
where $c_1\in H^2(M;\Z)$ is the first Chern class of the $\operatorname{U}(1)$ gauge connection (locally represented by 1-form $A$). On the level of charge/charged operators this 0-form global symmetry can be understood locally as follows (i.e., as in the general setting of~\cite{Gaiotto:2014kfa}). \textit{Charged} operators are 0-dimensional. An operator with charge $m\in \Z$ can be understood as the puncture in the spacetime with magnetic flux $m=(2\pi)^{-1}\int_{S^2} F$ over a small 2-sphere $S^2$ surrounding the puncture (i.e., monopole). A \textit{charge} operator $O_g(\Sigma)$ corresponding to a group element $g\in \C/\Z$ and supported on a 2-dimensional surface $\Sigma$ is simply
\begin{equation*}
 O_g(\Sigma)=\exp\bigg({\rm i}{g}\int_{\Sigma}F\bigg).
\end{equation*}
Globally, turning on a non-trivial $\omega$ in~\eqref{Z-U1-CS-coupled} can be realized by the insertion of the charge operator supported on a 2-chain representing the Poincar\'e dual of $\omega$, with charges on simplices given by the corresponding coefficients in $\C/\Z$. The $\operatorname{U}(1)_r$ Chern--Simons TQFT, ``enriched'' by this $\C/\Z$ global symmetry, then can be described in terms of a $G$-crossed MTC $\CC$, for $G=\C/\Z$, using the general formalism of~\cite{Barkeshli:2014cna} (cf.\ also~\cite{Benini:2018reh}) as follows. In the decomposition
\begin{equation*}
 \CC=\bigoplus_{g\in \C/\Z} \CC_g,
\end{equation*}
the component $\CC_g$ has simple objects that correspond to the line operators
\begin{equation*}
 W_e(\gamma)=\exp\bigg({\rm i}e \int_{\gamma}A\bigg)
\end{equation*}
with a complex charge $e$ with the fixed value $e\bmod 1=g\in \C/\Z$. For any fixed $g$ there are exactly $r$ distinct simple objects, as $W_r(\gamma)$ is known to be a trivial operator on the quantum level. That is, an equivalence class of the operator is determined by the value $e\bmod r\in \C/r\Z$. The fusion of the line operators is obviously consistent with the grading on the category.

Note that for $g\neq 0$, such line operators are not the ordinary line operators. They are not well defined by themselves, but are only allowed to live on a boundary of a surface $\Sigma$ (i.e., locally $\gamma=\partial\Sigma$) where a charge operator $O_g(\Sigma)$ is supported.

\subsection{Decorated TQFTs, gradings, and Fourier transform}\label{sec:TQFToperations}

Here we consider some basic operations on decorated TQFTs. We propose that BCGP and GPPV TQFTs are related by the combination of such operations (with slight modification related to the simplifications we will impose below). Namely, for $r=2\bmod 4$, BCGP TQFT can be obtained by applying ``Fourier transform" followed by ``$r$-wrapping" to GPPV TQFT. For $r=\pm 1\bmod 4$, the full transform is more involved, in particular on the level of the partition functions there is an extra summation over $\Tor H_1(M,\Z)$ in Conjectures~\ref{conj:cgpz} and~\ref{conj:cgpz2}. Note that in this relation, at all intermediate steps $r$ and $q$ should be considered as independent parameters. Only at the final step, one has to take the radial limit $q\rightarrow {\rm e}^{\frac{2\pi {\rm i}}{r}}$ (which, may lead to some divergences in certain cases). For the purpose of a more transparent exposition, instead of spin$^c$-TQFTs we consider $H^2(\cdot;\Z)$-decorated spin-TQFTs, and instead of $H^1(\cdot;\C/2\Z)$-decorated TQFTs we consider $H^1(\cdot;\R/\Z)$-decorated TQFTs. In the rest of the section, we study general TQFTs decorated by the structures as above and operations between them. We hope this discussion can be useful for other applications of decorated TQFTs, beyond the scope of the present paper.

\subsubsection{Decorated TQFTs and grading of Hilbert spaces}

``Decorated'' TQFTs (i.e., TQFTs defined on bordisms with additional structure) in general have induced grading on the vector spaces $V(\Sigma)$ associated to codimension-1 closed manifolds $\Sigma$ (i.e., the objects of the bordism category). The general rule is that the choice of the decoration on $\Sigma\times S^1$ decomposes into a choice of decoration on $\Sigma$ and a choice of the parameter \textit{dual} to the grading on $V(\Sigma)$. In particular, if the decoration on $\Sigma\times S^1$ is just a pullback of the decoration on $\Sigma$ (with respect to the projection $\Sigma\times S^1\rightarrow \Sigma$), then $Z\big(\Sigma\times S^1\big)=\dim V(\Sigma)$, where the right-hand side is the total dimension (over all gradings).

\subsubsection*{3d $\boldsymbol{H^1(\cdot;\R/\Z)}$-decorated TQFTs}
These are functors on the version of the category of $H^1(\cdot;\R/\Z)$-decorated cobordisms considered in~\cite[Sections~3.1 and~3.2]{blanchet2016non} with some additional data forgotten and the group $\C/2\Z$ replaced by~$\R/\Z$. The objects are closed surfaces $\Sigma$ with a distinguished point on each connected component and decorated by a class in $H^1(\Sigma,*;\R/\Z)$, where $*$ denotes the set of distinguished points. The morphisms are 3-dimensional cobordisms decorated by $H^1(M,*;\R/\Z)$ where $*$ are the set of the distinguished points on the boundary component.

The choice of the decoration on $\Sigma\times S^1$ is an element of
\begin{align*}
 H^1\big(\Sigma\times S^1;\R/\Z\big)
 &\cong \Hom (H_1(\Sigma;\Z),\R/\Z) \oplus \Hom(H_0(\Sigma;\Z),\R/\Z)
 \\
 &\cong H^1(\Sigma;\R/\Z)\oplus \Hom (H_0(\Sigma;\Z),\R/\Z).
\end{align*}
So $V(\Sigma)$ is naturally graded by $H_0(\Sigma)\cong H^2(\Sigma)$. The graded dimensions are given by the following relation:{\samepage
\begin{equation}
 \sum_{n\in H_0(\Sigma;\Z)}
 \dim V_{n}(\Sigma,\omega){\rm e}^{2\pi {\rm i}\alpha(n)}
 =Z\big(\Sigma\times S^1,\omega\oplus \alpha\big),
 \label{dec-dim1}
\end{equation}
where $\omega\in H^1(\Sigma)$ and $\alpha\in \Hom (H_0(\Sigma;\Z),\R/\Z)$.}

Physically, $\R/\Z$ is a 0-form symmetry. There are 0-dimensional charged operators with charges in $\Z$ and 2-dimensional charge operators labelled by $\R/\Z$, as in the example in Section~\ref{sec:U1CS}. Turning on $\alpha \in \Hom (H_0(\Sigma;\Z),\R/\Z)\cong H_2(\Sigma;\R/\Z)$ above corresponds to insertion of a charge operator along the spatial slice $\Sigma$.

\subsubsection*{3d $\boldsymbol{H^2(\cdot;\Z)}$-decorated TQFTs}

The choice of the decoration on $\Sigma\times S^1$ is an element of
\begin{equation*}
 H^2\big(\Sigma\times S^1;\Z\big)
 \cong H^2(\Sigma;\Z) \oplus H^1(\Sigma;\Z) \cong H^2(\Sigma;\Z) \oplus H_1(\Sigma;\Z).
\end{equation*}
So $V(\Sigma)$ is naturally graded by $\Hom(H_1(\Sigma;\Z),\R/\Z)\cong H^1(\Sigma;\R/\Z)$. The graded dimensions are given by the following relation:
\begin{equation}
 \sum_{\omega \in H^1(\Sigma;\R/\Z)}
 \dim V_{\omega}(\Sigma,n){\rm e}^{2\pi {\rm i}\omega(\gamma) }
 =Z\big(\Sigma\times S^1,n \oplus \gamma\big), \label{dec-dim2}
\end{equation}
where $n\in H^2(\Sigma;\Z)$ and $\gamma\in H_1(\Sigma;\Z)$. Since $H^1(\Sigma;\R/\Z)$ is in general not discrete, one has to specify what is meant by $\sum_{\omega}$. Consider the case of connected $\Sigma$ (the generalization to the disconnected case is straightforward). Let $g$ be the genus of $\Sigma$. Then $H^1(\Sigma;\R/\Z)\cong (\R/\Z)^{2g}=\big(S^1\big)^{2g}$, but non-canonically. There is a unique homogeneous form $\mu(\omega) \in \Omega^{2g}\big(H^1(\Sigma;\R/\Z)\big)$ normalized such that $\int \mu(\omega) =1$. Then
\begin{equation*}
 \sum_{\omega \in H^1(\Sigma;\R/\Z)}\ldots:=\int_{H^1(\Sigma;\R/\Z)} \mu(\omega)\ldots
\end{equation*}
It also satisfies
\begin{equation*}
 \int_{H^1(\Sigma;\R/\Z)} \mu(\omega) {\rm e}^{2\pi {\rm i}\omega(\gamma) }=\delta_{\gamma}:= \begin{cases}
 1, & \gamma=0, \\
 0, & \gamma \neq 0,
 \end{cases}
\end{equation*}
where $\gamma\in H_1(\Sigma;\Z)$.

Physically, $\Z$ is a 1-form symmetry. There are 1-dimensional charged operators with charges in $\R/\Z$ and 1-dimensional charge operators labelled by $\Z$. Turning on $\gamma \in H_1(\Sigma;\Z)$ above corresponds to insertion of a charge operator supported on a 1-dimensional curve in the spatial slice $\Sigma$.

\begin{table}[ht]
\centering
\caption{A quick guide to the symmetries.}
\begin{tabular}{c|c|c}
			\hline
			 &~$\phantom{\int^{\int^\int}}$~BCGP~$\phantom{\int^{\int^\int}}$~ &~$\phantom{\int^{\int^\int}}$~GPPV~$\phantom{\int^{\int^\int}}$~ \tabularnewline
				\hline
			Symmetry & $\operatorname{U}(1)$ & $\Z$ \tabularnewline
			$G$ & 0-form & 1-form \tabularnewline
			\hline
			Charged & $\Z$ & $\operatorname{U}(1)$ \tabularnewline
			objects & 0-dimensional & 1-dimensional \tabularnewline
			\hline
			$V (\Sigma)$ & \multirow{2}{*}{$H^1 (\Sigma,\operatorname{U}(1))$} & \multirow{2}{*}{$H^2 (\Sigma;\Z)$} \tabularnewline
			decorated by & & \tabularnewline
			\hline
			$V (\Sigma)$ & \multirow{2}{*}{$H^2 (\Sigma;\Z)$} & \multirow{2}{*}{$H^1 (\Sigma,\operatorname{U}(1))$} \tabularnewline
			graded by & & \tabularnewline		
			\hline			
		\end{tabular}
	\end{table}

\subsubsection{Fourier transform of TQFTs}

In this section, we suppose we are given two TQFTs, called $Z$ and $Z'$ defined on the categories of cobordisms decorated respectively by $H^2(\cdot;\Z)$ and by $H^1(\cdot;\R/\Z)$ classes, and we suppose that their partition functions on a closed 3-manifold $Y$ are related by a Fourier transform:
\begin{equation}
 Z'(Y,\omega) =\sum_{b\in H^2(Y;\Z)} {\rm e}^{2\pi {\rm i} \int_{Y}\omega \cup b} Z(Y,b),
 \label{fourier1}
\end{equation}
where $\omega\in H^1(Y;\C/\Z)$.
Such Fourier transform in particular appears as one of the two steps in the linear transform relating CGP and GPPV invariants in Conjecture~\ref{CGPZhatviac2}, with the second step being the ``$r$-wrapping'' that will be considered later.

We then try to extend this Fourier transform to the full category of cobordisms.
We first observe that, assuming all the integrals converge, a natural candidate for the inverse transform~is
\begin{equation}
 Z(Y,b) =\frac{1}{|\operatorname{Tor} H_1(Y;\Z)|}
 \int_{H^1(Y;\R/\Z)} \mu(\omega) {\rm e}^{2\pi {\rm i} \int_{Y}\omega \cup b} Z'(Y,\omega),
 \label{fourier2}
\end{equation}
where $\mu(\omega)$ is the homogeneous top degree form on $H^1(Y;\R/\Z)\cong \operatorname{Tor} H_1(Y;\Z) \times (\R/\Z)^{b_1}$ uniquely fixed by the condition that $\int_{\mathcal{M}} \mu(\omega)=1$
for each connected component $\mathcal{M}$ inside $H^1(Y;\R/\Z)$. The relation between the values of TQFT on a closed 2-manifold $\Sigma$ is given by the swap of grading with decoration
\begin{equation}
 V'_{n}(\Sigma,\omega) = V_{\omega}(\Sigma,n),
 \label{fourier-dim}
\end{equation}
where $n\in H^2(\Sigma;\Z)\cong H_0(\Sigma;\Z)$, $\omega\in H^1(\Sigma;\R/\Z)$ and the vector spaces are assumed to be finite dimensional. It is easy to see that~\eqref{fourier-dim} is consistent with~\eqref{dec-dim1},~\eqref{dec-dim2} combined with~\eqref{fourier1},~\eqref{fourier2}.

In order to extend the relation between the TQFTs to cobordisms, let us define the category of $H^2$-decorated cobordisms as follows:
\begin{itemize}\itemsep=0pt
\item The objects are pairs $(\Sigma,b)$ with $b\in H^2(\Sigma,\Sigma\setminus \{*\};\Z)$, where $\{*\}$ is the datum of one base point per connected component of $\Sigma$.
\item A morphism from $(\Sigma_-,b_-)$ to $(\Sigma_+,b_+)$ is a pair $(M,b)$ with $b\in H^2(M,\partial M\setminus \{*\};\Z)$, where $\partial M=\Sigma_+\sqcup \overline{\Sigma_-}$, $\{*\}=\{*_-\}\sqcup \{*_+\}$ is the set of one basepoint per component of $\Sigma_-\sqcup \Sigma_+$ and $b_\pm$ is the restriction of $b$ to $\Sigma_{\pm}$.
\end{itemize}
The composition of $M_1\colon(\Sigma_-,b_-)\to (\Sigma_0,b_0)$ and $M_2\colon(\Sigma_0,b_0)\to (\Sigma_+,b_+)$ is obtained by gluing~$M_1$ and $M_2$ along $\Sigma_0$ and defining $b$ on $M$ as $r(\phi^{-1}(b_-+b_+))$, where $\{*\}=\{*_+,*_-,*_0\}$,
\begin{equation*}
\phi\colon \ H^2(M,\partial M\cup \Sigma_0\setminus \{*\})\to H^2(M_-,\partial M_-\setminus \{*_-,*_0\})\oplus H^2(M_+,\partial M_+\setminus \{*_+,*_0\})
\end{equation*}
is coming from the Mayer--Vietoris sequence and
\begin{equation*}
r\colon \ H^2(M,\partial M\cup \Sigma_0\setminus \{*_+,*_0,*_-\})\to H^2(M,\partial M\setminus (\{*_+,*_-\}))
\end{equation*}
is the restriction map induced by the long exact sequence of the triple
\begin{equation*}
 (M,(\partial M \cup \Sigma_0) \setminus \{*_0,*_+,*_-\},\partial M \setminus \{*_+,*_-\}).
\end{equation*}

We shall now show that $V(\Sigma,b)$ is endowed by a $H^1(\Sigma;\R/\Z)$-grading as follows.
First of all remark that
\begin{equation*}
H^2(\Sigma\times [0,1],(\Sigma\setminus \{*\})\times \{0,1\};\Z)\cong H^1(\Sigma,\{*\};\Z)\oplus H^2(\Sigma,\Sigma\setminus \{*\};\Z),
\end{equation*}
where the injection $\delta\colon H^1(\Sigma;\Z)\cong H^1(\Sigma,\{*\};\Z)\to H^2(\Sigma\times [0,1],(\Sigma\setminus \{*\})\times \{0,1\};\Z)$ is induced by the exact sequence of the triple $(\Sigma\times [0,1],(\Sigma\setminus \{*\})\times \{0,1\},(\Sigma\setminus \{*\})\times \{0\}))$; and the generators $H^2(\Sigma,\Sigma\setminus \{*\};\Z)=\Z^{\pi_0(\Sigma)}$ are the Poincaré duals of the arcs $\{p\}\times [0,1]$ for $p\in \{*\}$.

Given $\omega\in H^1(\Sigma;\R/\Z)$, the $\omega$-homogeneous subspace of $Z(\Sigma, b)$ is defined as follows:
\begin{align*}
 V(\Sigma,b)_{\omega}:&=\big\{x\in V(\Sigma,b)\mid \forall c\in H^1(\Sigma;\Z)\
 Z(\Sigma\times [0,1],b+\delta(c))(x)
 =\exp(-2\pi {\rm i} \omega(c))x\big\}
\end{align*}
(remark that the restriction of $b+\delta(c)$ to $\Sigma\times \{0\}$ is $b$).

Now observe that there is a well defined map
\begin{equation*}
\int_M\colon \ H^1(M,\{*\};\R/\Z)\otimes H^2(M,\partial M\setminus \{*\};\Z)\to \R/\Z
\end{equation*}
defined equivalently as $\int_M \omega\otimes b:=\langle\omega\cup b,[M]\rangle=\omega(PD(b))$, where $[M]\in H^3(M,\partial M;\Z)$ is the fundamental class and $PD$ is Poincar\'e duality.

Considering all the morphisms $M\colon (\Sigma_-,b_-)\to (\Sigma_+,b_+)$ for which the underlying manifold is~$M$ we define their Fourier transform for any $\omega \in H^1(M,\{*\};\R/\Z)$ as
\begin{equation*}
Z'(M,\omega)_{b_-}:=\sum_{\substack{b\in H^2(M,\partial M\setminus \{*\};\Z)/\delta(H^1(\Sigma_-;\Z))\\ b|_{\Sigma_-}=b_-}} \exp\bigg(2\pi {\rm i} \int_{M} \omega\cup b\bigg) Z_{\omega}(M,b),
\end{equation*}
where
\begin{enumerate}[label=$(\roman*)$]\itemsep=0pt
\item $Z_{\omega}(M,b)\colon V_{\omega_-}(\Sigma_-,b_-)\to V_{\omega_+}(\Sigma_+,b_+)$
is the restriction of $Z(M,b)$ to the degree $\omega_{\pm}=\omega|_{\Sigma_\pm}$ vector subspaces and we use the identification $V_{\omega_\pm}(\Sigma_\pm, b_\pm)=V'_{b_\pm}(\Sigma_\pm, \omega_\pm)$ to interpret it as a map
$V'_{b_-}(\Sigma_-,\omega_-)\to V'_{b_+}(\Sigma_+,\omega_+)$;
\item $\delta\colon H^1(\Sigma_-;\Z)\to H^2(M,\partial M\setminus \{*\};\Z)$
is induced as above by the exact sequence of the triple $(M,\partial M\setminus \{*\}, \Sigma_-\setminus \{*_-\})$;
\item the sum is over all the representatives of classes $b$ restricting to $b_-$ on $\Sigma_-$ and the choice of a representative is irrelevant because we have:
\begin{align*}
&\exp\bigg(2\pi {\rm i} \int_{M} \omega\cup (b+\delta(c))\bigg) Z_{\omega}(M,b+\delta(c))
\\ &\qquad
=\exp\bigg(2\pi {\rm i} \int_{M} \omega\cup b+\omega\cup \delta(c)\bigg) Z_{\omega}(M,b)\circ Z_{\omega}(\Sigma_-\times [0,1],b_-+\delta(c))
\\ &\qquad
=\exp\bigg(2\pi {\rm i} \int_{M} \omega\cup b\bigg)\exp(2\pi {\rm i} \omega(c)) Z_{\omega}(M,b)\exp(-2\pi {\rm i} \omega(c))
\\ &\qquad
=\exp\bigg(2\pi {\rm i} \int_{M} \omega\cup b\bigg) Z_{\omega}(M,b),
\end{align*}
where in the second equality we used the definition of the grading on $V(\Sigma,b)$.
\end{enumerate}
Of course, in the above formula we assume that the sum is convergent.
So given a cobordism $M\colon\Sigma_-\to \Sigma_+$ and $\omega\in H^1(M,\{*\};\R/\Z)$ we will from now on say that $Z'(M,\omega)$ exists if it exists for all $b_-\in H^2(\Sigma_-,\Sigma_-\setminus \{*_-\};\Z)$.

\begin{Proposition}
Suppose that $M_-\colon\Sigma_-\to \Sigma_0$ and $M_+\colon \Sigma_0\to \Sigma_+$ are two cobordisms and let $M=M_+\circ M_-$. Let $\omega_\pm\in H^1(M_\pm,\{*_0,*_\pm\}; \R/\Z)$
and $\omega\in H^1(M,\{*_+,*_-\}; \R/\Z)$ be defined as $\operatorname{res}(\omega')$, where $\omega'\in H^1(M,\{*_-,*_+,*_0\};\R/\Z)$ restricts to both $\omega_\pm$, and
\[
\operatorname{res}\colon H^1(M,\{*_-,*_+,*_0\};\R/\Z)\to H^1(M,\{*_-,*_+\};\R/\Z)
\]
is the restriction map.
If $Z'(M_\pm,\omega_\pm)$ exist, then also $Z'(M,\omega)$ exists and it holds
\begin{equation*}
Z'(M,\omega)=Z'(M_+,\omega_+)\circ Z'(M_-,\omega_-).
\end{equation*}
\end{Proposition}

\begin{proof}
Let $b\in H^2(M,\Sigma_-\setminus \{*_-\}\cup \Sigma_+\setminus \{*_+\};\Z)$.
By the exact sequence of the triple
\begin{equation*}
(M,\Sigma_-\sqcup \Sigma_0\sqcup \Sigma_+\setminus \{*_-,*_0,*_+\},\Sigma_-\sqcup \Sigma_+\setminus \{*_-,*_+\}),
\end{equation*}
we have a surjective map
\begin{equation*}
\pi\colon \ H^2(M, \Sigma_-\sqcup \Sigma_0\sqcup \Sigma_+\setminus \{*_-,*_0,*_+\};\Z)\to H^2(M,\Sigma_-\sqcup \Sigma_+\setminus \{*_-,*_+\};\Z)
\end{equation*}
so that $\pi^{-1}(b)$ is well defined up to elements of the form $\delta(c)$ for some $c\in H^1(\Sigma_0\setminus \{*_0\});\R/\Z)$ where $\delta$ is induced by the same exact sequence.
So if $b'\in \pi^{-1}(b)$ then there are well-defined restrictions ${\rm res}_\pm(b')\in H^2(M_\pm,\Sigma_\pm\setminus \{*_\pm\}\sqcup \Sigma_0\setminus \{*_0\};\Z)$. Furthermore, one can check that the restrictions to $H^2(\Sigma_0,\Sigma_0\setminus \{*_0\};\Z)$ of ${\rm res}_+(b')$ and of ${\rm res}_-(b')$ coincide and depend only on $b$ (not on the choice of $b'$). Let us then denote this common restriction $b_0\in H^2(\Sigma_0,\Sigma_0\setminus \{*_0\};\Z)$.
We have then
\begin{align*}
Z'(M,\omega)_{b_-}&=\sum_{\substack{b\in \frac{H^2(M,\Sigma_-\setminus \{*_-\}\cup \Sigma_+\setminus \{*_+\};\Z)}{\delta(H^1(\Sigma_-\setminus \{*_-\};\Z))} \\ {\rm res}_-(b)=b_-}}
\exp\bigg(2{\rm i}\pi \int_M\omega\cup b\bigg) Z(M,b)
\\
&=\sum_{\substack{b'\in \frac{H^2(M,\Sigma_-\setminus \{*_-\}\cup \Sigma_0\setminus \{*_0\}\cup \Sigma_+\setminus \{*_+\};\Z)}{\delta(H^1(\Sigma_0\setminus \{*_0\}\sqcup \Sigma_-\setminus \{*_-\};\Z))} \\ {\rm res}_-(\pi(b'))=b_-}}
\exp\bigg(2{\rm i}\pi \int_M\omega\cup \pi(b')\bigg) Z(M,\pi(b'))
\\
&=\sum
\exp\bigg(2{\rm i}\pi \int_{M_-}\omega_-\cup b'_-+\int_{M_+}\omega_+\cup b'_+\bigg) Z(M,b'_+)Z(M,b'_-),
\end{align*}
where the last sum ranges over
\begin{equation*}
b'_-\in H^2(M_-,\Sigma_-\setminus \{*_-\}\cup \Sigma_0\setminus \{*_0\};\Z)/\delta\big(H^1(\Sigma_-\setminus \{*_-\};\Z)\big)
\end{equation*}
such that ${\rm res}_-(\pi(b'_-))=b_-$ and over
\begin{equation*}
b'_+\in H^2(M_+,\Sigma_+\setminus \{*_+\}\cup \Sigma_0\setminus \{*_0\};\Z)/\delta\big(H^1(\Sigma_0\setminus \{*_0\};\Z)\big)
\end{equation*}
such that ${\rm res}_0(\pi(b'_+))={\rm res}_0(\pi(b'_-))$ (and where we let $b'_\pm$ be the restriction of $b'$ to $(M_\pm,\Sigma_\pm\sqcup \Sigma_0\setminus \{*_\pm,*_0\})$.
The last equality uses the Mayer--Vietoris sequence for the pairs $(M,\Sigma_{+,-,0}\setminus \{*_{+,-,0}\})$ and $(M,\Sigma_{\pm,0}\setminus \{*_{\pm,0}\})$ and the fact that $Z$ is functorial.
\end{proof}

\subsubsection[``$r$-wrapping'' of $H^1(\cdot;\R/\Z)$]{``$\boldsymbol r$-wrapping'' of $\boldsymbol {H^1(\cdot;\R/\Z)}$}

The other operation which we will need to upgrade the relation~\eqref{CGP-Zhat-2mod4-mod} to TQFTs is the operation that takes an $H^1(\cdot;\R/\Z)$-TQFT $Z$ and produces another $H^1(\cdot;\R/\Z)$-TQFT $Z'$. For closed connected manifolds, we have (assume $r=2\bmod 4$):
\begin{align}
 Z'(M,\omega)={}&
 \frac{ r^{-(b_0+b_1)/2}}{\sqrt{| \Tor H_1(M;\Z)|}} \nonumber
 \\
 &\times\int_{H^1(M;\R/\Z)}\mu(\alpha)
 {\rm e}^{-\frac{\pi {\rm i}r}{2} q_s(\alpha')-2\pi {\rm i} \lk(\alpha',\omega')}
 r^{b_1}\delta(r\alpha''-\omega'')
 {Z}(M,\alpha),
 \label{r-wrap}
\end{align}
where we choose an explicit splitting
\begin{equation}
\omega =\omega'\oplus \omega'', \qquad
\alpha =\alpha'\oplus \alpha''\in H^1(M;\R/\Z)\cong \Tor H_1(M;\R/\Z)\oplus (\R/\Z)^{b_1}
\label{r-wrapping-splitting}
\end{equation}
as in Section~\ref{sec:b1-positive}.

Applying the formula~\eqref{r-wrap} to $M=\Sigma\times S^1$, where $\Sigma$ is a closed connected oriented genus $g$ surface, we get the following relation between the dimensions of the corresponding graded vector spaces:
\begin{equation*}
 \dim V'_n(\Sigma,\omega)=r^{-g}\sum_{m\in (\Z/r\Z)^{2g}}
 \dim V_{rn}\bigg(\Sigma,\frac{\omega+m}{r}\bigg),
\end{equation*}
where $n\in\Z$, $\omega\in H^1(\Sigma;\R/\Z)\cong (\R/\Z)^{2g}$.

This formula suggests the following generalisation to TQFTs. Assume that after a choice of a symplectic basis in $H_1(\Sigma;\Z)$ (that splits the generators into $g$ A-cycles and $g$ B-cycles) and the corresponding splitting $\omega = \omega_A\oplus \omega_B$ ($\omega_{A,B}\in (\R/\Z)^{g}$), one can identify $V_n(\Sigma,\omega_A\oplus \omega_B)$ for a fixed $\omega_A$ and all possible $\omega_B$. That is one can explicitly drop the dependence on $\omega_B$: $V_n(\Sigma,\omega_A):=V_n(\Sigma,\omega_A\oplus \omega_B)$. Then the same is true for $V'_n(\Sigma,\omega_A\oplus \omega_B)$, and we have
\begin{equation*}
 V'_n(\Sigma,\omega_A)=\bigoplus_{s\in (\Z/r\Z)^g} V_{rn}\bigg(\Sigma,\frac{\omega_A+s}{r}\bigg),
\end{equation*}
where $(\Z/r\Z)^g\subset (\R/\Z)^{g}$ is identified with the subgroup of holonomies of order $r$ along the A-cycles.

This is consistent with the conjectural relation between the CGP and $\hat{Z}$ invariants of knots (which are valued in the corresponding vector spaces above for $\Sigma =T^2$).

The extension of the relation~\eqref{r-wrap} to general bordisms turns out to be subtle and technically complicated. One of the reasons is that one requires to choose a splitting of $H^1(M;\R/\Z)$ in~\eqref{r-wrapping-splitting}, which is not a natural structure on manifolds and there are various ways one can extend it to the manifolds with boundary. We will not address this issue in this work.

\section{Relation to WRT invariants}
\label{sec:WRT}

In the previous sections, we focused on the relations between $\hat{Z}$ and $\N_r$. But there are also relations between and $\hat{Z}$ and $\WRT$ invariants and between $\N_r$ and $\WRT$ invariants (possibly in their cohomology-refined versions). In this section, we compare this triangle of relations and show that they are compatible with each other.

\subsection[Compatibility of the relations between ${\rm N}^0_r$, ${\rm WRT}_r$, and $\hat{Z}$]{Compatibility of the relations between $\boldsymbol{{\rm N}^0_r}$, $\boldsymbol{{\rm WRT}_r}$, and $\boldsymbol{\hat{Z}}$}

Let $M$ be a rational homology sphere. Consider
\begin{equation*}
 Z_a^\so(M):=\sum_{\mathfrak{s}\in \mathrm{Spin}^c(M)} S_{a,\mathfrak{s}}^{\so}\hat{Z}_{\mathfrak{s}}(M),\qquad a\in H_1(M;\Z),
\end{equation*}
where
\begin{align*}
S^{\so}_{a,\sigma(b,s)} := \frac{1}{{|H_1(M;\Z)|}}
\begin{cases}
\sum_{f} {\rm e}^{2\pi {\rm i}\left( -\frac{r-1}{4}\lk(a,a)
 +\lk(a,f-b) +\lk(f,f) -\frac{1}{4}\mu(M,s)\right)} & \text{if}~r=1~\text{mod}~4,
 \\[2mm]
 \sqrt{|H_1(M;\Z)|}
 {\rm e}^{-\frac{\pi {\rm i}r}{2} q_s(a)-2\pi {\rm i} \lk(a,b)} & \text{if}~r=2~\text{mod}~4,
 \\[2mm]
\sum_{f} {\rm e}^{2\pi {\rm i}\left(-\frac{r+1}{4}\lk(a,a)
 -\lk(a,f+b) -\lk(f,f) -\frac{1}{4}\mu(M,s)\right)} & \text{if}~ r=3~\text{mod}~4.
\end{cases}
\end{align*}
We have introduced $\operatorname{SO}(3)$ subscript in order to explicitly distinguish $Z_a^{{\rm SO}(3)}(M)$ from $Z_a(M)$ that appeared in~\cite{Gukov:2016njj,Gukov:2017kmk,Gukov:2016gkn}. The latter naturally appear in the decomposition of the standard WRT invariant (corresponding to $\operatorname{SU}(2)$ Chern--Simons gauge theory), while the former, as we will see below, appear naturally in the decomposition of its mod-2-cohomology refined version, which can be understood as $\operatorname{SO}(3)$ version of Chern--Simons gauge theory. In physical terms, the refinement parameter $\omega\in H^1(M,\Z/2\Z)$ appears in the action term $\pi\int_{M}\omega\cup w_2 $ where $w_2\in H^2(M,\Z/2\Z)$ is the second Stiefel--Whitney class of the $\operatorname{SO}(3)$ gauge bundle.

Conjecture~\ref{conj:cgpz} can be written as follows (for any $r\neq 0\bmod 4$):
\begin{equation}
 \N_r(M,\omega)=(-1)^r \CT(M,[\omega])\sum_{a\in H_1(M;\Z)}
 {\rm e}^{-\pi {\rm i}\omega(a)} Z^\so_a(M)\Big|_{q={\rm e}^{\frac{2\pi {\rm i}}{r}}}
 \label{CGP-Za}
\end{equation}
for $\omega\in H^1(M;\C/2\Z) \setminus H^1(M;\Z/2\Z)$.
Similarly, for the $H^1(M;\Z/2\Z)$-refined WRT invariant of~\cite{costantino2015relations} it is conjectured (see Appendix~\ref{app:wrt-refined} for details):
\begin{equation}
 \WRT_r(M,\omega)=\frac{1}{{\rm i}\sqrt{8r}}\sum_{a\in H_1(M;\Z)}
 {\rm e}^{-\pi {\rm i}\omega(a)} Z^\so_a(M)\Big|_{q={\rm e}^{\frac{2\pi {\rm i}}{r}}}
 \label{refined-wrt-Za}
\end{equation}
for $\omega\in H^1(M;\Z/2\Z)$.

Given $\omega\in H^1(M;\C/2\Z)$, as shown in~\cite{costantino2014quantum}, a normalized invariant $N_r^0(M,\omega)$ can be defined~by
\begin{equation}
 \N_r^0(M,\omega):=\frac{\N_r(M\# M',\omega\oplus \omega')}{\N_r(M',\omega')}
 \label{N0-conn-def}
\end{equation}
for any $M'$ and $\omega'\in H^1(M';\C/2\Z)$ such that both denominator and numerator in the right-hand side are well defined and moreover the denominator is non-zero. For $\omega \notin H^1(M;\Z/2\Z)$, we have
\begin{equation*}
 \N_r^0(M,\omega)=0.
\end{equation*}
And for $\omega \in H^1(M;\Z/2\Z)$, the conjecture of~\cite{costantino2015relations} in this paper's normalisation (theorem for knot surgeries) is that\footnote{For $b_1>0$, it is assumed that $|H_1(M;\Z)|=0$.}
\begin{equation}
 \N_r^0(M,\omega)=D\cdot |H_1(M;\Z)| \WRT_r(M,\omega).
 \label{N0-refined-wrt}
\end{equation}
The statement of the above conjecture follows from \cite[Theorem~1.4]{de2020nonsemisimple} and \cite[Theorem~1]{chen2009relation} at least for $r$ odd and for $\omega=0$.

Let us check that~\eqref{N0-refined-wrt} is consistent with our conjectures~\eqref{CGP-Za} and~\eqref{refined-wrt-Za}. Plugging~\eqref{CGP-Za} into the definition~\eqref{N0-conn-def} we have
\begin{gather*}
 \N_r^0(M,\omega)=\frac{\CT(M\# M',[\omega]\oplus [\omega'])}{\CT(M',[\omega'])}
 \cdot \frac{ \sum_{\substack{a\in H_1(M;\Z)\\ a'\in H_1(M';\Z)} }
 {\rm e}^{-\pi {\rm i}\omega(a)-\pi {\rm i} \omega'(a')} Z^\so_{a\oplus a'}(M\# M')}
 {\sum_{ a'\in H_1(M';\Z)} {\rm e}^{-\pi {\rm i} \omega'(a')} Z^\so_{a'}(M')}
 \Bigg|_{q={\rm e}^{\frac{2\pi {\rm i}}{r}}}.
\end{gather*}

Assume that $b_1(M')>0$ and $\omega'$ is a generic element of $H^1(M;\C/2\Z)$. Then (see, e.g.,~\cite{turaev2002torsions})
\begin{equation*}
 \frac{\CT(M\# M',[\omega]\oplus [\omega'])}{\CT(M',[\omega'])}= |H_1(M;\Z)|.
\end{equation*}
Taking a limit (possible since $b_1(M')>0$) where $\omega'$ tends to an element of $H_1(M;\Z/2\Z)$ (e.g., zero) and using~\eqref{refined-wrt-Za}, we then have
\begin{equation*}
 \N_r^0(M,\omega)=|H_1(M;\Z)|
 \frac{\WRT_r(M\# M',\omega \oplus \omega')}{\WRT_r(M',\omega')}.
\end{equation*}
Using $\WRT_r(M\# M',\omega \oplus \omega')=D\cdot \WRT_r(M,\omega)\WRT_r(M',\omega')$,
we then indeed arrive at~\eqref{N0-refined-wrt}.

\subsection{0-surgeries on twist knots}
\label{sec:twistknots}

0-surgeries on knots, $M = S^3_0 (K)$, all have $H_1 (M) = \Z$. In such cases, the relation to the standard (not refined) WRT invariants is expected to be especially simple~\cite{Chun:2019mal}:
\begin{equation*}
\mathrm{WRT}_r (S^3_0 (K)) = - \frac{1}{2 D^2} \big[ \hat Z_0^{(+)} + \hat Z_0^{(-)} \big] \big|_{q \to {\rm e}^{2\pi {\rm i} /r}}.
\end{equation*}
This relation was explicitly verified in~\cite{Chun:2019mal} for many twist knots and torus knots, and is expected to hold more generally. Here,
\begin{equation*}
\hat Z_0^{(+)} \big(S^3_0 (K)\big) = \mathrm{Res}_{x=0} \frac{x^{1/2} - x^{-1/2}}{x} F_K (x,q) = \frac{1}{2} f_1^K (q)
\end{equation*}
is $\hat Z$-invariant in the trivial spin$^c$ structure. For twist knots, we also have
\begin{equation*}
\hat Z_0^{(-)} \big(S^3_0 (K)\big) = \mathrm{Res}_{x=x_0} \frac{x^{1/2} - x^{-1/2}}{x} F_K (x,q),
\end{equation*}
where $x_0$ is the root of the Alexander polynomial with $\operatorname{Re} (x_0) > 0$. (Recall, that for a twist knot the Alexander polynomial has degree $2$, so that the corresponding residues differ by a sign.) Therefore, we can write
\begin{equation*}
\mathrm{WRT}_r \big(S^3_0 (K)\big) = - \frac{1}{2 D^2} \lim_{q \to {\rm e}^{2\pi {\rm i} /r}} \oint_C \frac{{\rm d}x}{2\pi {\rm i} x} \frac{x^{1/2} - x^{-1/2}}{x} F_K (x,q)
\end{equation*}
where the contour $C$ goes around $x=0$ and $x_0$. It has been checked in~\cite{Chun:2019mal} that this procedure indeed recovers the correct WRT invariants of $M = S^3_0 (K)$ for many twists knots $K$.

This way of recovering $\mathrm{WRT}_r \big(S^3_0 (K)\big)$ is {\it a priori} different from the strategy used earlier in this paper, where the CGP and WRT invariants of $S^3_0 (K)$ are obtained from $F_K (x,q)$ by first specializing $F_K (x,q)$ to a root of unity $q = {\rm e}^{2 \pi {\rm i} /r}$ and then summing over colors / decorations $x$ as in a typical surgery formula. Roughly speaking, this approach~-- based on the relation~\eqref{FKADO} to $\operatorname{ADO}_r$ invariants~-- exchanges the order of operations, so that the limit $q \to {\rm e}^{2 \pi {\rm i} /r}$ comes first. It is instructive to verify that these two methods are compatible and, therefore, form a~consistent network of proposed relations.
Namely, repeating the arguments of Appendix~\ref{app:wrt-refined} for the $0$-surgeries on knots, we expect the following relation:
\begin{equation}
\mathrm{WRT}_r \big(S^3_0 (K)\big) = - \frac{1}{2 D^2} \lim_{q \to {\rm e}^{2\pi {\rm i} /r}} \sum_{n \in \Z} \hat Z_{nk},\label{WRTZn}
\end{equation}
where, as in earlier sections, by $\hat{Z}_n$ we denote the invariant $\hat{Z}_\mathfrak{s}(S^3(K))$ associated to the spin$^c$ structure encoded by the integer $2n$ on the knot (see Section~\ref{sec:combinatorial}). Alternatively, according to~\eqref{FKADO},
\begin{equation*}
\mathrm{WRT}_r \big(S^3_0 (K)\big) = - \frac{1}{4 D^2} \sum_{n=0}^{r-1}
\big( \xi^{n} - \xi^{-n} \big)^2
\operatorname{ADO}_r \big(\xi^{2n-2};K\big),
\end{equation*}
where we used $\Delta_K (1) = 1$. Comparing the right-hand sides of the above two formulae we can eliminate $\mathrm{WRT}_r \big(S^3_0 (K)\big)$ from these relations. Similar considerations apply to the invariants $\N_{r}\big(S^3_0(K),\omega\big)$; the only difference is that there is still $x$-dependence in all of the expressions, and, as explained in Section~\ref{sec:0surgknot}, we obtain
\begin{equation}
\lim_{q \to {\rm e}^{2\pi {\rm i} /r}} \sum_{n \in \Z} \hat Z_{nr} x^{nr} = \sum_{n=0}^{r-1}
\frac{\operatorname{ADO}_r \big(x \xi^{2n-2};K\big)}{2 \Delta_K (x^{r})} \cdot \big( \xi^{n} x^{1/2} - \xi^{-n} x^{-1/2} \big)^2.
\label{ratfncnxr}
\end{equation}
Since $\operatorname{ADO}_r (x;K)$ is a polynomial in $x$, the right-hand side is expected to be a rational function in $x$. Let us illustrate how this works for the figure-8 knot. In the conventions~\eqref{FKgeneral}, for the figure-8 knot we have $f_1 = 1$, $f_3 = 2$, $f_5 = q^{-1} + 3 + q$, and so on. From~\eqref{ZnFm}, we get $\hat Z_0 = -2$, $\hat Z_1 = -1$, $\hat Z_2 = -q^{-1} - 1 - q$, etc. For example, for $r=2$ and any knot $K$, with our conventions we have $\operatorname{ADO}_2 (x;K) = \Delta_K (x)$ and the right-hand side of the above relation becomes
\begin{equation}
\frac{\big(x - 2 + x^{-1} \big) \cdot \Delta_K (- x) + \big({-} x - 2 - x^{-1} \big) \cdot \Delta_K (x)}{2 \Delta_K \big(x^{2}\big)}.
\label{AlexAlex}
\end{equation}
For example, for the figure-8 knot, $\Delta_{{\bf 4_1}} (x) = -x^{-1} + 3 - x$, and we get
\begin{equation*}
- \frac{x^2-4+x^{-2}}{x^2-3+x^{-2}} =
- 1 + x^2 + 3 x^4 + 8 x^6 + 21 x^8 + 55 x^{10}
+ 144 x^{12} + 377 x^{14} + 987 x^{16}
+ \cdots.
\end{equation*}
The coefficients of this expansion perfectly match $\hat Z_n \big|_{q \to -1}$ for even values of $n$.\footnote{Turning this around, we can say that $\big(x^{1/2} - x^{-1/2}\big) F_K (x,q)$ restricted to even powers of $x$ is a $q$-deformation of the rational function~\eqref{AlexAlex} determined by the Alexander polynomial.} Note, that~\eqref{ratfncnxr} can be viewed as a close cousin of the Conjecture~\ref{conj:park}, obtained from it by multiplying with $x^{\frac{1}{2}} - x^{- \frac{1}{2}}$, replacing $x$ by $\xi^{2j} x$, and then summing over $j = 0, \ldots, r-1$. This again verifies the consistency of various proposed relations.

In order to perform a similar computation for other values of $r$, it may be convenient, building on~\cite{Ekholm:2020lqy,Kucharski:2020rsp}, to express $\hat Z_n (q)$ in the quiver form,
\begin{equation*}
\hat Z_n (q) =
2 \tilde f_{n-1} (q) - \tilde f_{n-2} (q) - \tilde f_{n} (q),\qquad n > 1,
\end{equation*}
where, for the figure-8 knot,
\begin{equation*}
\tilde f_n (q) = \sum_{n_1+ \cdots + n_6 = n}
\big({-}q^{1/2}\big)^{n_4 + n_5 + n_6} q^{(n_2 + n_5)(n_6 - n_1) + \frac{1}{2}(n_4^2 + n_5^2 + n_6^2)}
\prod_{i=1}^6 \frac{1}{(q;q)_{n_i}}.
\end{equation*}
Curiously, much like $f_m (q)$, the coefficients $\tilde f_{n} (q)$ are all Laurent polynomials in $q$ with integer coefficients.

Now, once we managed to write the right-hand side of~\eqref{WRTZn} with the regularization parameter~$x$ as a rational function, it is straightforward to set $x=1$ and obtain the WRT invariant. For example, for any knot $K$ we have
\begin{equation*}
\mathrm{WRT}_2 \big(S^3_0 (K)\big) = 1.
\end{equation*}
This is indeed what we expect from the surgery formula for
\begin{equation*}
 \mathrm{WRT}_r \big(S^3_0 (K)\big) = D^{-2} \sum_{n=1}^{r-1} (-1)^{n+1}[n] J_{n-1} (q)\Big|_{q={\rm e}^\frac{2\pi {\rm i}}{r}}.
\end{equation*}

\subsection[From CGP to WRT via $\hat{Z}$ for 0-surgeries on knots]{From CGP to WRT via $\boldsymbol{\hat{Z}}$ for 0-surgeries on knots}

Consider the general surgery formula~\eqref{Zhat-b1-def} for $\hat{Z}$ in the case of 0-surgery on a knot $K$. We have $V=b_1=1$, $\varepsilon''=1$, $\sigma=0$, $B=0$ and
\begin{equation}
 \hat{Z}_{\sigma(b'',s)}=q^{b''} F_{2b''},
 \label{Z-hat-zero-surgery-from-F}
\end{equation}
where $b''\in \Z\cong H_1\big(S^3_0(K);\Z\big)$ runs over integers and $F_{2b''}$ are the coefficients of the following formal power series:
\begin{equation*}
 F(x,q) := F_K\big(x^2,q\big)\big(x-x^{-1}\big)=\sum_{\ell}F_\ell x^\ell.
\end{equation*}
For knots only even powers actually appear in the series $F(x,q)$ (not to be confused with $F_K(x,q)$).

\subsubsection[Odd level $r$]{Odd level $\boldsymbol r$}

As before, denote $\mu:=\omega(\mathfrak{m}) \in \C/2\Z$, where $\mathfrak{m}$ is the class of the meridian of the know in $H_1\big(S^3_0(K);\Z\big)$. The formula~\eqref{CGP-Zhat-1mod4-mod} in the case of a 0-surgery on $K$ then reads
\begin{equation*}
 \N_{r}\big(S^3_0(K),\omega\big)=
 r \frac{\Delta\big({\rm e}^{2\pi {\rm i} \mu}\big)}{\big({\rm e}^{\pi {\rm i}\mu}-{\rm e}^{-\pi {\rm i} \mu}\big)^2} \sum_{m\in \Z} {\rm e}^{2\pi {\rm i}\mu m}
 \hat{Z}_{\sigma(rm,s)}\Big|_{q\rightarrow {\rm e}^{\frac{2\pi {\rm i}}{r}}}.
\end{equation*}
Plugging in~\eqref{Z-hat-zero-surgery-from-F} and using $\Delta(1)=1$, we then have
\begin{equation}
 \lim_{\mu\rightarrow 0}[r\mu]^2 \N_{r}\big(S^3_0(K),\omega\big)=
 \lim_{\mu\rightarrow 1}[r\mu]^2 \N_{r}\big(S^3_0(K),\omega\big)=
 -\frac{D^2}{2}\sum_{m\in\Z}
F_{2mr}\Big|_{q\rightarrow {\rm e}^{\frac{2\pi {\rm i}}{r}}}.
 \label{CGP-zero-limit-odd}
\end{equation}
On the other hand, for WRT invariant we have
\begin{align}
 \mathrm{WRT}_r\big(S^3_0(K)\big)&=D^{-2}\sum_{n=1}^{r-1}(-1)^{n+1}J_{n-1}(q)[n]=-\frac{1}{2r}\sum_{n\in \Z_r}F\big(q^{n/2},q\big)|_{q\rightarrow {\rm e}^\frac{2\pi {\rm i}}{r}}\nonumber
 \\
 &=-\frac{1}{2r}\sum_{\substack{ n\in \Z_r \\ b''\in \Z}}{\rm e}^\frac{2\pi {\rm i}nb''}{r} F_{2b''}\Big|_{q\rightarrow {\rm e}^\frac{2\pi {\rm i}}{r}}
 =-\frac{1}{2}\sum_{m\in\Z}F_{2mr}\Big|_{q\rightarrow {\rm e}^\frac{2\pi {\rm i}}{r}}.
 \label{WRT-zero-odd}
\end{align}
Combining~\eqref{CGP-zero-limit-odd} and~\eqref{WRT-zero-odd} we get
\begin{equation*}
 \lim_{\mu\rightarrow 0}[r\mu]^2 \N_{r}\big(S^3_0(K),\omega\big)=
 \lim_{\mu\rightarrow 1}[r\mu]^2 \N_{r}\big(S^3_0(K),\omega\big)=
 D^2\operatorname{WRT}_r\big(S^3_0(K)\big),
\end{equation*}
which is in agreement with Theorem~\ref{thm-CGP-knot-surgery-limit}.

\subsubsection[Even level $r$]{Even level $\boldsymbol r$}

 The formula~\eqref{CGP-Zhat-2mod4-mod} in the case of a 0-surgery on $K$ then reads
\begin{equation*}
 \N_{r}(M,\omega)=
 r \frac{\Delta_K\big({\rm e}^{2\pi {\rm i}\mu}\big)}{\big({\rm e}^{\pi {\rm i}\mu}-{\rm e}^{-\pi {\rm i} \mu}\big)^2}
 \sum_{ m }
 {\rm e}^{\pi {\rm i}\mu m} \hat{Z}_{\sigma(\frac{rm}{2},s)}\Big|_{q\rightarrow {\rm e}^{\frac{2\pi {\rm i}}{r}}}.
\end{equation*}
Plugging in~\eqref{Z-hat-zero-surgery-from-F} we then have
\begin{equation}
 \lim_{\mu\rightarrow 0}[r\mu]^2 \N_{r}\big(S^3_0(K),\omega\big)=
 -\frac{D^2}{2}\sum_{m\in\Z}
(-1)^m F_{mr}\Big|_{q\rightarrow {\rm e}^{\frac{2\pi {\rm i}}{r}}},
 \label{CGP-zero-limit-even}
\end{equation}
and
\begin{equation}
 \lim_{\mu\rightarrow 1}[r\mu]^2 \N_{r}\big(S^3_0(K),\omega\big)=
 -\frac{D^2}{2}\sum_{m\in\Z}
 F_{mr}\Big|_{q\rightarrow {\rm e}^{\frac{2\pi {\rm i}}{r}}}.
 \label{CGP-zero-limit-one-even}
\end{equation}
On the other hand, for $r=2\bmod 4$ one can consider WRT invariant refined by an element $\gamma\in H^1\big(S^3_0(K);\Z_2\big)\cong \Z_2$~\cite{kirbymelvin}. Let $c=\gamma(\mathfrak{m})\in \Z_2$. We have
\begin{align}
 &\mathrm{WRT}_r(S^3_0(K),\gamma)\nonumber
 \\
 &\qquad{}=D^{-2}\sum_{\substack{n=0\\n=c+1\bmod 2}}^{r-1}(-1)^{n+1}J_{n-1}(q)[n]
=D^{-2}\sum_{n\in 2\Z_{r/2}+c+1}F\big(q^{n/2}\big)\Big|_{q\rightarrow {\rm e}^\frac{2\pi {\rm i}}{r}}\nonumber
 \\
 &\qquad{}=-\frac{1}{2r}\sum_{\substack{ a\in \Z_{r/2} \\ b''\in \Z}}{\rm e}^\frac{2\pi {\rm i}(2a+c+1)b''}{r} F_{2b''}\Big|_{q\rightarrow {\rm e}^\frac{2\pi {\rm i}}{r}}
 =-\frac{1}{4}\sum_{m\in\Z}(-1)^{m(c+1)} F_{mr}\Big|_{q\rightarrow {\rm e}^\frac{2\pi {\rm i}}{r}}.
 \label{WRT-zero-even}
\end{align}
Combining~\eqref{CGP-zero-limit-even}, \eqref{CGP-zero-limit-one-even} and~\eqref{WRT-zero-even}, we get
\begin{equation*}
 \lim_{\mu\rightarrow c}[r\mu]^2 \N_{r}\big(S^3_0(K),\omega\big)=
 2D^2\operatorname{WRT}_r\big(S^3_0(K),\gamma\big),\qquad c=0,1,
\end{equation*}
which is again consistent with Theorem~\ref{thm-CGP-knot-surgery-limit}.

\appendix

\section[Spin and spin$^{c}$ sign-refined torsion]{Spin and spin$^{\boldsymbol{c}}$ sign-refined torsion}
\label{app:torsion}

The Reidemeister torsion has an intrinsic sign ambiguity. As was shown by Turaev, it is possible to fix it by choosing an Euler structure~\cite{turaev1990euler}. In the case of 3-manifolds, such choice is equivalent to a choice of a spin$^c$ structure~\cite{turaev1997torsion}. Consider a 3-manifold $M=S^3(\mathcal{L})$ obtained by a surgery on a framed link $\mathcal{L}\in S^3$. As before, let $\vert$ be the set of components of the link and $B_{IJ}$, $I,J\in \vert$ its linking matrix. Denote by $\sigma_K(M)\in \mathrm{Spin}^c$ a spin$^c$ structure that corresponds to a characteristic vector $K\in \Z^\vert/2B\Z^\vert$, $K_I=B_{II}\bmod 2$, as in~\eqref{plumbed-spinc}. Let $a\in H^1(M;\C/\Z)$ and $\alpha_I:=a(\mathfrak{m}_I)\in \C/\Z$. They satisfy the condition $\sum_{J}B_{IJ}\alpha_J=0\bmod 1$. As before, denote $\varepsilon=(1,1,\ldots,1)\in \Z^\vert$. The Turaev's sign refined torsion of $M=S^3(\mathcal{L})$ is then given by the following formula~\cite{turaev2002torsions} (see also~\cite{blanchet2016non}):
\begin{equation*}
 \CT(M,a,\sigma_K)= (-1)^{b_+}
 \prod_{I}\frac{1}{{\rm e}^{\pi {\rm i} \alpha_I}-{\rm e}^{-\pi {\rm i} \alpha_I}}
 \nabla_{\mathcal{L}} \big(\big\{{\rm e}^{\pi {\rm i} \alpha_I}\big\}_I\big)
 {\rm e}^{\pi {\rm i}(-\alpha^TK+\varepsilon^T B\alpha )},
\end{equation*}
where $\nabla_{\mathcal{L}}$ is the Alexander--Conway function of the link $\mathcal{L}$.

The above torsion relates to the invariant $\N_2(M,\omega)$, where $\omega\in H^1(M;\C/2\Z)\setminus H^1(M;\Z/2\Z)$.
Indeed, in~\cite{costantino2014quantum} (Theorem 6.23, taking into account the different normalisation used in the present paper) the following was proved:
\begin{equation*}
\CT\bigg(M,\frac{\omega}{2},\sigma_K\bigg)=(-2)^{1+b_1(M)}\bigg(\frac{\rm i}{4}\bigg)^{b_1(M)}\frac{{\rm i} \N_2(M,\omega)}{2} {\rm i}^{-\frac{\mu^TB\mu}{2}-K^T\mu},
\end{equation*}
where we used that $\frac{\omega}{2}\in H^1(M;\C/\Z)$ is the well defined cohomology class the value of which on the meridian $\mathfrak{m}_I$ is $\frac{\omega(\mathfrak{m}_I)}{2}=\frac{\mu_I}{2}$ and we encoded the spin$^c$ structure $\sigma_K$ via $K$ as above.

In particular, using the canonical map $i\colon\Spin(M)\to \mathrm{Spin}^c(M)$ we can define an invariant which depends only on a spin structure $s$ as
\begin{equation*}
\CT_{s}\bigg(M,\frac{\omega}{2}\bigg):=\CT\bigg(M,\frac{\omega}{2},i(s)\bigg).
\end{equation*}
More explicitly, if we now let $\tilde{s}\in \Z^{\vert}$ be such that $\tilde{s}= s \bmod 2$ and $K=\sum_{I,J\in \vert}B_{IJ}\tilde{s}_J$ then it holds:
\begin{equation*}
\CT_{s}\bigg(M,\frac{\omega}{2}\bigg):=(-2)^{1+b_1}\bigg(\frac{\rm i}{4}\bigg)^{b_1(M)}\frac{ {\rm i} \N_2(M,\omega)}{2} {\rm i}^{-\frac{\mu^TB\mu}{2}-\tilde{s}B\mu}.
\end{equation*}

It is possible to define the version of the torsion depending only on spin structure (cf.~\cite{Mikhaylov:2015nsa}). Using the map~\eqref{spin-spinc-map}, consider
\begin{equation*}
 \CT_{s}(M,a):= \CT(M,a,\sigma(b,s))
 {\rm e}^{2\pi {\rm i} a(b)}.
\end{equation*}
From the surgery formula, it is easy to see that the right-hand side depends only on spin structure $s\in \Spin(M)$, but not $b\in H_1(M;\Z)$. Namely, using the correspondence~\eqref{plumbed-spin}, we have $K=2b+Bs$ for $\sigma_K=\sigma(b,s)$, where $b\in \Z^\vert/B\Z^\vert$, $s\in \Z^\vert/2\Z^\vert$, $\sum_JB_{IJ}s_J=B_{II}\bmod 2$. Therefore, for $M=S^3(\mathcal{L})$ we have
\begin{equation*}
 \CT_s(M,a)= (-1)^{b_+}
 \prod_{I}\frac{1}{{\rm e}^{\pi {\rm i} \alpha_I}-{\rm e}^{-\pi {\rm i} \alpha_I}}
 \nabla_{\mathcal{L}}
 \big(\big\{{\rm e}^{\pi {\rm i} \alpha_I}\big\}_I\big)
 {\rm e}^{\pi {\rm i}(\varepsilon-s)^TB\alpha }.
\end{equation*}
In particular, for a plumbed $M$ we have
\begin{equation}
 \CT_s(M,a)=
 (-1)^{b_+}
 \prod_{I}\big({\rm e}^{\pi {\rm i} \alpha_I}-{\rm e}^{-\pi {\rm i} \alpha_I}\big)^{\deg(I)-2}
 {\rm e}^{\pi {\rm i}(\varepsilon-s)^TB\alpha }.
 \label{torsion-plumbed-spin}
\end{equation}
Note that if $a=\omega \bmod H^1(M;\Z/2\Z)$, where $\omega\in H^1(M;\C/2\Z)$ the dependence on the spin structure disappears, as $B\alpha=B\omega \in 2\Z^\vert$.

\section[$H^1(M;\Z/2\Z)$-refined WRT invariant and $\hat{Z}$]{$\boldsymbol{H^1(M;\Z/2\Z)}$-refined WRT invariant and $\boldsymbol{\hat{Z}}$}
\label{app:wrt-refined}

In~\cite{costantino2015relations}, building on~\cite{blanchet1992invariants,kirbymelvin} the authors define a refined Witten--Reshetikhin--Turaev invariant $\WRT_r(M,\omega)$ for $r\neq 0 \bmod 4$ that depends on a choice of $\omega\in H^1(M;\Z/2\Z)$, see Definition~\ref{def-wrt-refined}. In this appendix we relate this invariant to $\hat{Z}$.
Let $M=S^3(L)$ be the 3-manifold obtained by a~surgery on a~framed link~$L$. We will use the same conventions as before. Let $J_{n-\varepsilon}(L)\in \Z\big[q^{\pm 1/4}\big]$ be the Jones polynomial of $L$ colored by $\mathfrak{sl}_2$ representations of dimensions $n\in \{1,2,3,\ldots\}^\vert$.
Then
\begin{align*}
 &\WRT_r\big(S^3(\CL),\omega\big)
 \\
 &\qquad {}=
 \frac{D^{-b_0-b_1}}{(\Delta_+^{{\rm SO}(3)})^{b_+}(\Delta_-^{{\rm SO}(3)})^{b_-}}
\sum_{\substack{n\in \{1,2,\ldots,r-1\}^\vert\\n=\mu+\varepsilon \bmod 2}} J_{n-1}[\CL]\prod_{I\in \vert} (-1)^{n_I+1}\frac{q^{n_I/2}-q^{-n_I/2}}{q^{1/2}-q^{-1/2}}
 \bigg|_{q={\rm e}^{\frac{2\pi {\rm i}}{r}}},
\end{align*}
where as before $\mu_I:=\omega_I(\mathfrak{m}_I)$. For $r=2\bmod 4$, this invariant is a slight modification of the invariant of~\cite{kirby1990p} (see also~\cite{blanchet1992invariants}). For odd $r$, the invariant satisfies
\begin{equation*}
 \WRT_r(M,0)=\WRT_r^{{\rm SO}(3)}(M),
\end{equation*}
where $\WRT_r^{{\rm SO}(3)}(M)$ is the $\operatorname{SO}(3)$ version of the WRT invariant introduced in~\cite{kirby1990p}.

To conjecture a relationship between $\WRT_r(M,\omega)$ and $\hat{Z}_\mathfrak{s}(M)$ consider again the case of plumbing surgery. In this case,
\begin{align*}
 J[\CL]_{n-\varepsilon}={}&\frac{q^{\frac{\sum_{I}B_{II}(n_I^2-1)}{4}}}{q^{1/2}-q^{-1/2}}
 \prod_{I \in \vert}(-1)^{(n_I+1)(B_{II}+1)}\big({q^{n_I/2}-q^{-n_I/2}}\big)^{1-\text{deg}(I)}
 \\
&\times\prod_{(I,J) \in \text{Edges}}\big(q^{n_{I}n_{J}/2}-q^{-n_{I}n_{J}/2}\big).
\end{align*}

We proceed similarly to Section~\ref{sec:plumbed}
\begin{gather*}
\WRT_r(M,\omega) =\tilde{\mathcal{A}}\cdot \tilde{\mathcal{C}},
\\
 \tilde{\mathcal{A}}=\frac{(-1)^{b_-} r^{-V/2} \xi^{\frac{3\sigma-\Tr B}{2}}}{{\rm i}\sqrt{8r}}\cdot
 \begin{cases}
 {\rm e}^{\frac{\pi {\rm i}\sigma}{2}} &\text{if}\quad r=1\bmod 4, \\
 2^{-V/2} {\rm e}^{\frac{3\pi {\rm i}\sigma}{4}} &\text{if}\quad r=2\bmod 4, \\
 {\rm e}^{\pi {\rm i}\sigma} &\text{if}\quad r=3\bmod 4,
 \end{cases}
\\
 \tilde{\mathcal{C}}=\sum_\ell \tilde{\mathcal{C}}_\ell F_\ell,
\qquad
\tilde{\mathcal{C}}_\ell=\sum_{\substack{\tilde{n}\in (\Z/2r\Z)^\vert\\ \tilde{n}=\mu+(r-1)\varepsilon \bmod 2}}
 \xi^{\ell^T\tilde{n}+\frac{1}{2} \tilde{n}^TB\tilde{n}}.
\end{gather*}
We have used the following fact:
\begin{equation*}
(-1)^{\sum_{I}(n_I+1)B_{II}}\big|_{n=\mu+\varepsilon\bmod 2}=
(-1)^{\sum_{I}\mu_IB_{II}}=(-1)^{\mu^TB\mu}=1
\end{equation*}
since $B\mu=0\bmod 2$,
where $F_\ell$ are the same as in Section~\ref{sec:plumbed}.
Applying Gauss reciprocity we have
\begin{gather*}
 \tilde{\mathcal{C}}_\ell=
 \xi^{-\frac{\ell^TB^{-1}\ell}{2}}
 \frac{{\rm e}^{\frac{\pi {\rm i}\sigma}{4}}(r/2)^{V/2}}{|\det B|^{1/2}}
 \underbrace{
 \sum_{\tilde{a}\in \Z^\vert /2B\Z^{\vert}}
 {\rm e}^{-\frac{\pi {\rm i} r}{2} \tilde{a}^T B^{-1}\tilde{a}-\pi {\rm i} \tilde{a}^T B^{-1}(\ell+B(\mu+\varepsilon))}
 }_{=:\tilde{\mathcal{C}}_\ell'},
\\
 \tilde{\mathcal{C}}_\ell'=
 \sum_{{a}\in \Z^\vert /B\Z^{\vert}}\sum_{A\in \Z^\vert/2\Z^\vert}\!\!\!
 {\rm e}^{-\frac{\pi {\rm i} r}{2} {a}^T B^{-1}{a}-\pi {\rm i}r A^Ta-\frac{\pi {\rm i}r}{2} A^TBA
 -2\pi {\rm i} a^TB^{-1}b-\pi {\rm i} a^T(s+\mu)
 -\pi {\rm i} A^TBs}.
 \label{refined-wrt-C-prime}
\end{gather*}

\subsection[Level $r=2\bmod 4$]{Level $\boldsymbol{r=2\bmod 4}$}

The sum in Appendix~\ref{refined-wrt-C-prime} simplifies to
\begin{equation}
 \tilde{\mathcal{C}}_\ell'=
 2^V\sum_{{a}\in \Z^\vert /B\Z^{\vert}}
 \exp\left\{-\frac{\pi {\rm i} r}{2} {a}^T B^{-1}{a}
 -2\pi {\rm i} a^TB^{-1}b-\pi {\rm i} a^T(s+\mu)\right\}.
\end{equation}
Combining everything together, we then have
\begin{align*}
 \WRT_r(M,\omega)={}&
 \frac{(-1)^{b_+}}{{\rm i}\sqrt{8r}|\det B|^{1/2}} \xi^{\frac{3\sigma-\Tr B}{2}}
 \\
 &\times \sum_{\ell \in \Z^\vert }\sum_{{a}\in \Z^\vert /B\Z^{\vert}}
 F_\ell \xi^{-\frac{\ell^TB^{-1}\ell}{2}}
 {\rm e}^{-\frac{\pi {\rm i} r}{2} {a}^T B^{-1}{a}
 -2\pi {\rm i} a^TB^{-1}b-\pi {\rm i} a^T(s+\mu)}.
\end{align*}
We can then conjecture the following general relation for a rational homology $M$ and $r=2\allowbreak \bmod 4$:
\begin{equation*}
 \WRT_r(M,\omega)=
 \frac{1}{{\rm i}\sqrt{8r|H_1(M;\Z)|}}
 \sum_{a,b\in H_1(M;\Z)}
 {\rm e}^{-\frac{\pi {\rm i}r}{2} q_s(a)-2\pi {\rm i} \lk(a,b)-\pi {\rm i} \omega(a)}
 \hat{Z}_{\sigma(b,s)}\Big|_{q\rightarrow {\rm e}^{\frac{2\pi {\rm i}}{r}}}.
\end{equation*}

\subsection[Level $r=1\bmod 4$]{Level $\boldsymbol{r=1\bmod 4}$}

Applying a version of the Gauss reciprocity formula to the sum over $A$ in Appendix~\ref{refined-wrt-C-prime}, we can rewrite it as follows:
\begin{align*}
 \tilde{\mathcal{C}}_\ell'={}
 \frac{{\rm e}^{-\frac{\pi {\rm i}\sigma}{4}}2^{V/2}}{|\det B|^{1/2}}
 \sum_{a,f\in \Z^\vert/B\Z^\vert}
 \exp&\biggl\{-\frac{\pi {\rm i} (r-1)}{2} a^TB^{-1}a
 -2\pi {\rm i} a^TB^{-1}b-\pi {\rm i} a^T\mu
\\
 &\hphantom{\biggl\{}+2\pi {\rm i} f^TB^{-1}f
 +2\pi {\rm i} f^TB^{-1}a
 +\frac{\pi {\rm i}}{2} s^TBs
 \biggr\}.
\end{align*}
Combining everything together, we have
\begin{align*}
 \WRT_r(M,\omega)={}&
 \frac{(-1)^{b_+}}{{\rm i}\sqrt{8r}|\det B|}
 {\rm e}^{\frac{\pi {\rm i}}{2}(s^TBs-\sigma )}
 \xi^{\frac{3\sigma-\Tr B}{2}}
 \sum_{\ell \in \Z^\vert }\sum_{a,f\in \Z^\vert /B\Z^{\vert}}
 F_\ell \xi^{-\frac{\ell^TB^{-1}\ell}{2}}
 \\
 &\times{\rm e}^{-\frac{\pi {\rm i} (r-1)}{2} a^TB^{-1}a
 -2\pi {\rm i} a^TB^{-1}b-\pi {\rm i} a^T\mu
 +2\pi {\rm i} f^TB^{-1}f
 +2\pi {\rm i} f^TB^{-1}a }.
\end{align*}
 Taking into account~\eqref{Rokhlin-mod4}, we can then conjecture the following general relation for a rational homology $M$ and $r=1\bmod 4$:
\begin{align*}
 &\WRT_r(M,\omega)
 \\
 &\qquad=
 \frac{{\rm e}^{-\frac{\pi {\rm i}}{2} \mu(M,s)}}{{\rm i}\sqrt{8r}|H_1(M;\Z)|}
 \sum_{a,b,f\in H_1(M;\Z)}
 {\rm e}^{2\pi {\rm i}\left( -\frac{r-1}{4}\lk(a,a)
 +\lk(a,f-b)-\frac{1}{2}\omega(a) +\lk(f,f)\right)}
 \hat{Z}_{\sigma(b,s)}\Big|_{q\rightarrow {\rm e}^{\frac{2\pi {\rm i}}{r}}}.
 \label{refined-wrt-Zhat-1mod4}
\end{align*}

\subsection[Level $r=3\bmod 4$]{Level $\boldsymbol{r=3\bmod 4}$}

This case is analogous to the case $r=1\bmod 4$ considered above. Applying a version of the Gauss reciprocity formula to the sum over $A$ in Appendix~\ref{refined-wrt-C-prime}, we can rewrite it as follows:
\begin{align*}
 \tilde{\mathcal{C}}_\ell'=
 \frac{{\rm e}^{\frac{\pi {\rm i}\sigma}{4}}2^{V/2}}{|\det B|^{1/2}}
 \sum_{a,f\in \Z^\vert/B\Z^\vert}
 \exp&\biggl\{-\frac{\pi {\rm i} (r+1)}{2} a^TB^{-1}a
 -2\pi {\rm i} a^TB^{-1}b-\pi {\rm i} a^T\mu
 \\
 &\hphantom{\biggl\{}-2\pi {\rm i} f^TB^{-1}f -2\pi {\rm i} f^TB^{-1}a
 -\frac{\pi {\rm i}}{2} s^TBs \biggr\}.
\end{align*}
Combining everything together we then have
\begin{align*}
 \WRT_r(M,\omega)={}&
 \frac{(-1)^{b_+}}{{\rm i}\sqrt{8r}|\det B|}
 {\rm e}^{\frac{\pi {\rm i}}{2}\left(\sigma-s^TBs \right)}
 \xi^{\frac{3\sigma-\Tr B}{2}}
 \sum_{\ell \in \Z^\vert }\sum_{a,f\in \Z^\vert /B\Z^{\vert}}
 F_\ell \xi^{-\frac{\ell^TB^{-1}\ell}{2}}
 \\
&\times {\rm e}^{-\frac{\pi {\rm i} (r-1)}{2} a^TB^{-1}a
 -2\pi {\rm i} a^TB^{-1}b-\pi {\rm i} a^T\mu
 +2\pi {\rm i} f^TB^{-1}f
 +2\pi {\rm i} f^TB^{-1}a }.
\end{align*}
We can then conjecture the following general relation for a rational homology $M$ and $r=3$ $\bmod~4$:
\begin{align*}
& \WRT_r(M,\omega)
\\
&\qquad{}=
 \frac{{\rm e}^{\frac{\pi {\rm i}}{2}\mu(M,s)}}{{\rm i}\sqrt{8r}|H_1(M;\Z)|}
 \sum_{a,b,f\in H_1(M;\Z)}
 {\rm e}^{2\pi {\rm i}\left( -\frac{r+1}{4}\lk(a,a)
 -\lk(a,f+b)-\frac{1}{2}\omega(a) -\lk(f,f) \right)}
 \hat{Z}_{\sigma(b,s)}\Big|_{q\rightarrow {\rm e}^{\frac{2\pi {\rm i}}{r}}}.
\end{align*}

\section{Graded bases of the CGP TQFT}

In~\cite{blanchet2016non}, a TQFT $\VV$ was built by applying the universal construction to the invariants $\N_r$.
It~turns out that the functor one gets is a symmetric monoidal one from a suitable category of decorated cobordisms into that of graded vector spaces, endowed with the symmetry which is the flip if $r$ is odd and is the supersymmetric exchange if $r$ is even.

The cobordisms considered for this construction are $3$-manifolds $M$ with boundary endowed with cohomology classes $\omega\in H^1(M,\{*\};\C/2\Z)$ where $\{*\}$ is the choice of a base point per each connected component of the boundary (besides other standard decorations). Furthermore, by definition
\begin{equation*}
\VV_k(\Sigma,\omega)= V\big((\Sigma,\omega)\sqcup \big(S^2,V_0^{\otimes 2}\otimes \sigma^{\otimes k}\big)\big),
\end{equation*}
where $\sigma$ is the $1$-dimensional module over $U^H_q\mathfrak{sl}_2$ with weight $2r'$ and $V$ is the vector space associated to a surface by applying directly the universal construction. Finally, one lets $\VV(\Sigma,\omega)=\bigoplus_k \VV_k(\Sigma,\omega)$ which is $\Z$-graded, but it turns out that it is always finite dimensional.
As proved in~\cite{blanchet2016non} Theorem 5.9, letting
\begin{equation*}
\dim_t(\VV(\Sigma))=\begin{cases}
\sum t^k \dim( \VV_k(\Sigma)) & \text{if\quad$r$ is odd,}\\
\sum (-t)^k \dim( \VV_k(\Sigma)) & \text{if\quad$r$ is even},
\end{cases}
\end{equation*}
then if $\omega\in H^1(\Sigma;\C/2\Z)\setminus H^1(\Sigma;\Z/2\Z)$ then it holds
\begin{equation*}
\dim_{t^{-2r'}}(\VV(\Sigma_g))= \frac{(r')^g}{r}\sum_{k\in H_r} \bigg(\frac{t^r-t^{-r}}{t\xi^k-t^{-1}\xi^{-k}}\bigg)^{2g-2}.
\end{equation*}
In particular, the ungraded dimension for $g\geq 2$ is $r^{3g-3}$ if $r$ is odd and $\frac{r^{3g-3}}{2^{g-1}}$ if $r$ is even.

\begin{Remark}
The value of the Verlinde formula coincides with the value of the invariant $Z_r$ which is NOT equal to $\N_r$:
\[
Z_r=\bigg(\frac{(-1)^{r-1}}{\sqrt{r'}}\bigg)^{b_0}\bigg(\frac{(-1)^{r-1}\sqrt{r'}}{r}\bigg)^{b_1}\N_r.
\]
\end{Remark}

\begin{Example}
If $g=3$ the graded dimensions of $\dim_{t^{-2r'}}\VV(\Sigma)$ are
\begin{gather*}
r=2 \colon \ \dim_{t^{-2r'}}\VV(\Sigma) =t^4-4t^2+6-4t^{-2}+t^{-4},
\\
r=3\colon \ \dim_{t^{-2r'}}\VV(\Sigma) =108t^6+ 513+ 108t^{-6},
\\
r=4\colon \ \dim_{t^{-2r'}}\VV(\Sigma) =-8t^{12}+ 80 t^8-248t^4+352-248t^{-4}+80t^{8}-8t^{-12}.
\end{gather*}
\end{Example}

\begin{Example}
The above Verlinde formula applies only when $(\Sigma,\omega)$ is admissible.
For instance, when $\Sigma=S^2$ this is not the case and indeed the invariant is not a Laurent polynomial:
\begin{equation*}
\N_r\big(S^2\times S^1,\beta\big)=\sum_{k\in H_r} \frac{\big(q^{\beta+k}-q^{-\beta-k}\big)^2}{\big(q^{r\beta}-q^{-r\beta}\big)^2}= \frac{2r}{\big(q^{r\beta}-q^{-r\beta}\big)^2},
\end{equation*}
where $\beta\in H^1\big(S^2\times S^1;\C/2\Z\big)$ is a cohomology class the value of which on $\{pt\}\times S^1$ is $\beta\in \C/2\Z$.
\end{Example}

\begin{Example}
A special case is when $r=2$.
In this case, if $(\Sigma, \omega)$ is admissible one gets
\begin{equation*}
\dim_{t^{-2}}\VV(\Sigma)=\big(t-t^{-1}\big)^{2g-2}{\rm i}^{2-2g}.
\end{equation*}
Then applying \cite[Proposition~6.22]{blanchet2016non}, one recovers the Conway polynomial of the link in Figure~\ref{fig:verlinde}:
\begin{equation*}
\nabla_L=\big(q^{\alpha_0}-q^{-\alpha_0}\big)^{2g-1}\prod_{i=1}^{2g}\big(q^{\alpha_i}-q^{-\alpha_i}\big),
\end{equation*}
where $\alpha_0$ is the color of the main strand and $\alpha_i$ are the colors of the remaining $2g$ ones.
\begin{figure}
 \centering
 \includegraphics[width=7cm]{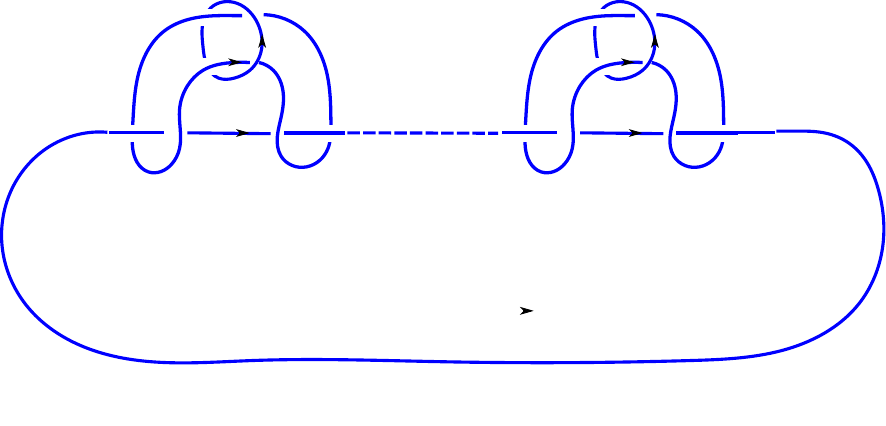}
 \caption{The link surgering on which provides $\Sigma_g\times S^1$.}
 \label{fig:verlinde}
\end{figure}
\end{Example}
The definition of the $\Z$-grading of $\VV(\Sigma,\omega)$ can be given in a more intrinsic way as follows.
Let~$Y$ be a three-manifold obtained by taking the complement of an open ball in a three-dimensional handlebody, so that $\partial Y=\Sigma\sqcup S^2$.
As detailed in~\cite{blanchet2016non}, $\VV(\Sigma, \omega)$ is $\Z$-graded with the grading being induced by the action of $H^0(\Sigma)$ as follows: if $\phi\in H^0(\Sigma;\C/2\Z)$ we can map it to a cohomology class in $H^0(\{*\};\C/2\Z)$ (where $\{*\}$ is set formed by base point on $\Sigma$ and one on $S^2$) by extending it to $0$ on $S^2$; then let $\delta(\phi)\in H^1(Y,\{*\};\C/2\Z)$ be the cohomology class induced by the exact sequence of the pair $(Y,\{*\})$. Observe that its restriction to $(\partial Y\setminus \{*\})$ is the zero cohomology class so that if $W\in H^1(Y,\{*\};\C/2\Z)$ is a class the restriction of which to $H^1(\Sigma,\{*\};\C/2\Z)$ is $\omega$ then also $W+\delta(\phi)$ is. We say that a vector $[Y,W]\in \VV(\Sigma,\omega)$ is of degree~$ k$ if for each $\phi\in H^0(\Sigma;\C/2\Z)$ we have $[Y,\omega+\delta\phi]=[Y,\omega]q^{2r'k\phi}$. It turns out that only some integer values of $k$ are possible.

For generic $\omega$, a basis of $\VV_0(\Sigma_g,\omega)$ is obtained as follows. Let $\Gamma$ be an oriented trivalent graph the thickening of which is a handlebody $H_g$ of genus $g$ the boundary of which is identified with~$\Sigma$. An edge $e$ of $\Gamma$ is colored by $\overline{\alpha}(e):=\omega(m_e)\in \C/2\Z$ where $m_e$ is the oriented meridian of the edge. Then consider a lift $\alpha\colon \operatorname{Edges}(\Gamma)\to \C$ of $\overline{\alpha}$ such that $(\partial \alpha)(v)\in H_r$ for every vertex $v\in \Gamma$. Furthermore, restrict to those $\alpha$ such that the real part of $\alpha(e)$ is between $[0,2r[$ for a~fixed arbitrary edge $e$.
Such a set is a basis of $\VV_0(\Sigma_g,\omega)$.

\section{Commutativity of limits}\label{app:commutlimits}

In this section, we show that the assumption (i) of the Theorem~\ref{thm:CGP-Zhat-torsion} is satisfied for a certain subclass of plumbing links. We will need the following proposition, which is slight generalization of a corollary to a proposition in~\cite{lawrence1999modular}.\footnote{The proposition in~\cite{lawrence1999modular} considers a slightly less general regularization.}

\begin{Proposition}
\label{prop:limits-lemma}
Let $C\colon\Z\rightarrow \C$ be a function with a period $M\in \Z_+$ and mean value 0. Then
\begin{enumerate}\itemsep=0pt
\item[$(i)$]$\displaystyle\lim_{\epsilon\rightarrow 0+}
 \sum_{n\geq 1}C(n){\rm e}^{-\epsilon(n+\gamma)}=
 -\sum_{n=1}^{M}\frac{n}{M} C(n)$,

\item[$(ii)$]$\displaystyle\lim_{\epsilon\rightarrow 0+}
 \sum_{n\geq 1}C(n){\rm e}^{-\epsilon(n^2+2\alpha n+\beta)}=
 -\sum_{n=1}^{M}\frac{n}{M} C(n)$
for any $\alpha$, $\beta$ and~$\gamma$.
\end{enumerate}
\end{Proposition}

\begin{proof}
Can be given for example using Euler--Maclaren asymptotic summation formula:
\begin{equation*}
	\sum_{n\geq 0}f(n) =\int_{0}^\infty f(x)\,{\rm d}x -\sum_{r\geq 1}\frac{B_r^-}{r!} f^{(r-1)}(0).
\end{equation*}
Only the integral part and the term $r=1$ in the sum will contribute to constant and possibly singular terms of the expansion in $\epsilon$. The singular terms and the constant terms depending on~$\alpha$,~$\beta$ and~$\gamma$ will cancel out due to the zero mean value condition.
\end{proof}

Consider a plumbing tree with vertex $I=0$ of valency 3, three vertices of valency one ($I=1,2,3$) and possibly other vertices of valency two. We then have
\begin{equation}
 F_L\big(x^2;q\big)\prod_I\big(x_I-x_I^{-1}\big)=\big(x_1-x_1^{-1}\big)\big(x_2-x_2^{-1}\big)\big(x_3-x_3^{-1}\big)
 \sum_{n\geq 1}\big(x_0^{2n-1}-x_0^{-2n+1}\big).
 \label{Y-shaped-F-expansion}
\end{equation}
In the case $\det B\neq 0$ (i.e., $b_1=0$), after $t$-regularization and the Laplace transform the sum over $n$ above will take the following form (up to a finite number of terms, which do not affect the issue of commutativity of the limits):
\begin{equation}
 \Lo_\omega F_L\big(x^2;q\big)\prod_I\big(x_I-x_I^{-1}\big)=
 \sum_{n\geq 1}\sum_{\alpha,\beta,\gamma} C_{\alpha,\beta,\gamma}(n)q^{\frac{-B^{-1}_{00}}{4}(n^2+2\alpha n+\beta)} t^{2n+\gamma},
 \label{Y-shaped-L-transform}
\end{equation}
where the sum over $\alpha$, $\beta$ and~$\gamma$ is over a finite set of rational numbers and $C_{\alpha,\beta,\gamma}(n)$ are periodic in~$n$. For the sum to give a well-defined element in $\C\big(\big(q^{1/p}\big)\big)$, we require $B_{00}^{-1}<0$. The zero mean value condition is satisfied because of the alternating signs in the expansion of~\eqref{Y-shaped-F-expansion}. We~then have
\begin{equation*}
 \lim_{q\rightarrow {\rm e}^{\frac{2\pi {\rm i}}{r}}}
 \lim_{t\rightarrow 1}\Lo_\omega F_L\big(x^2;q\big)\prod_I\big(x_I-x_I^{-1}\big)=
 \lim_{\epsilon \rightarrow 0+}
 \sum_{n\geq 1}\sum_{\alpha,\beta,\gamma} \tilde{C}_{\alpha,\beta,\gamma}(n){\rm e}^{-\epsilon(n^2+2\alpha n+\beta)}
\end{equation*}
and
\begin{equation*}
\lim_{t\rightarrow 1}
 \lim_{q\rightarrow {\rm e}^{\frac{2\pi {\rm i}}{r}}}
 \Lo_\omega F_L\big(x^2;q\big)\prod_I\big(x_I-x_I^{-1}\big)=
 \lim_{\epsilon \rightarrow 0+}
 \sum_{n\geq 1}\sum_{\alpha,\beta,\gamma} \tilde{C}_{\alpha,\beta,\gamma}(n){\rm e}^{-\epsilon(n+\gamma)},
\end{equation*}
where
\begin{equation*}
 \tilde{C}_{\alpha,\beta,\gamma}(n):=
 C_{\alpha,\beta,\gamma}(n){\rm e}^{\frac{-2\pi {\rm i}B^{-1}_{00}}{4r}(n^2+2\alpha n+\beta)}
\end{equation*}
are also periodic (generally with a larger period). Using the Proposition~\ref{prop:limits-lemma}, we can then check commutativity of the limits.

The analysis can be extended to the case $b_1>0$. This in particular covers the case of 0-surgeries on torus knots, which can be related to the plumbings of this type by Kirby moves. The main modification is that in~\eqref{Y-shaped-L-transform} one has to replace $B_{00}^{-1}$ with $\sum_{i,j=1}^{V-b_1}U_{i0}U_{j0}(B')^{-1}_{ij}$ (see Section~\ref{sec:Zhat-def-b1} for the notation), which is again required to be negative in order for $\CL_\omega$ operation to be well-defined.

\subsection*{Acknowledgements} We would like to thank Francesco Benini, Christian Copetti, Boris Feigin, Azat Gainutdinov, Hiraku Nakajima, Sunghyuk Park, Du Pei, and Nicolai Reshetikhin for helpful discussions and the anonymous Referees for the valuable suggestions on the improvement of the paper. We~also would like to thank the organizers of the 2019 conference ``New Developments in Quantum Topology'' at UC Berkeley, where the discussion on the relation between the two invariants was initiated. The work of S.G.~is supported by the U.S.~Department of Energy, Office of Science, Office of High Energy Physics, under Award no.~DE-SC0011632, and by the National Science Foundation under Grant no.~NSF DMS 1664227. The work of F.C.~was supported by the French Agence Nationale de la Recherche via the ANR Project QUANTACT and by the Labex CIMI ANR-11-LABX-0040.

\addcontentsline{toc}{section}{References}
\LastPageEnding


\begin{thebibliography}{99}
\footnotesize\itemsep=0pt

\bibitem{MR1164114}
Akutsu Y., Deguchi T., Ohtsuki T., Invariants of colored links, \textit{J.~Knot
 Theory Ramifications} \textbf{1} (1992), 161--184.

\bibitem{MR3204520}
Andersen J.E., Kashaev R.M., Quantum {T}eichm\"uller theory and {TQFT}, in
 X{VII}th {I}nternational {C}ongress on {M}athematical {P}hysics, \href{https://doi.org/10.1142/9789814449243_0073}{World Sci.
 Publ.}, Hackensack, NJ, 2014, 684--692.

\bibitem{MR4400935}
Andersen J.E., Misteg{\aa}rd W.E., Resurgence analysis of quantum invariants of
 {S}eifert fibered homology spheres, \href{https://doi.org/10.1112/jlms.12506}{\textit{J.~Lond. Math. Soc.}} \textbf{105}
 (2022), 709--764, \href{https://arxiv.org/abs/1811.05376}{arXiv:1811.05376}.

\bibitem{MR3370620}
Andersen J.E., Ueno K., Construction of the {W}itten--{R}eshetikhin--{T}uraev
 {TQFT} from conformal field theory, \href{https://doi.org/10.1007/s00222-014-0555-7}{\textit{Invent. Math.}} \textbf{201}
 (2015), 519--559, \href{https://arxiv.org/abs/1110.5027}{arXiv:1110.5027}.

\bibitem{Barkeshli:2014cna}
Barkeshli M., Bonderson P., Cheng M., Wang Z., Symmetry fractionalization,
 defects, and gauging of topological phases, \href{https://doi.org/10.1103/PhysRevB.100.115147}{\textit{Phys. Rev.~B}}
 \textbf{100} (2019), 115147, 99~pages, \href{https://arxiv.org/abs/1410.4540}{arXiv:1410.4540}.

\bibitem{beliakova2020nonsemisimple}
Beliakova A., Hikami K., Non-semisimple invariants and {H}abiro's series, in
 Topology and Geometry~-- a~Collection of Essays Dedicated to {V}ladimir
 {G}.~{T}uraev, \textit{IRMA Lect. Math. Theor. Phys.}, Vol.~33, \href{https://doi.org/10.4171/IRMA/33-1/10}{Eur. Math.
 Soc.}, Z\"urich, 2021, 161--174, \href{https://arxiv.org/abs/2009.13285}{arXiv:2009.13285}.

\bibitem{Benini:2018reh}
Benini F., C\'ordova C., Hsin P.S., On 2-group global symmetries and their
 anomalies, \href{https://doi.org/10.1007/jhep03(2019)118}{\textit{J.~High Energy Phys.}} \textbf{2019} (2019), no.~3, 118,
 70~pages, \href{https://arxiv.org/abs/1803.09336}{arXiv:1803.09336}.

\bibitem{MR0395610}
Berezin F.A., Quantization, \href{https://doi.org/10.1070/IM1974v008n05ABEH002140}{\textit{Math. USSR Izv.}} \textbf{8} (1974),
 1109--1165.

\bibitem{MR2135527}
Bezrukavnikov R., Finkelberg M., Mirkovi\'c I., Equivariant homology and
 {$K$}-theory of affine {G}rassmannians and {T}oda lattices, \href{https://doi.org/10.1112/S0010437X04001228}{\textit{Compos.
 Math.}} \textbf{141} (2005), 746--768, \href{https://arxiv.org/abs/math.AG/0306413}{arXiv:math.AG/0306413}.

\bibitem{blanchet1992invariants}
Blanchet C., Invariants on three-manifolds with spin structure,
 \href{https://doi.org/10.1007/BF02566511}{\textit{Comment. Math. Helv.}} \textbf{67} (1992), 406--427.

\bibitem{blanchet2014non}
Blanchet C., Costantino F., Geer N., Patureau-Mirand B., Non semi-simple
 {$\mathfrak{sl}(2)$} quantum invariants, spin case, \href{https://doi.org/10.1007/s40306-014-0089-5}{\textit{Acta Math.
 Vietnam.}} \textbf{39} (2014), 481--495, \href{https://arxiv.org/abs/1405.3490}{arXiv:1405.3490}.

\bibitem{blanchet2016non}
Blanchet C., Costantino F., Geer N., Patureau-Mirand B., Non-semi-simple
 {TQFT}s, {R}eidemeister torsion and {K}ashaev's invariants, \href{https://doi.org/10.1016/j.aim.2016.06.003}{\textit{Adv.
 Math.}} \textbf{301} (2016), 1--78, \href{https://arxiv.org/abs/1404.7289}{arXiv:1404.7289}.

\bibitem{blanchetetal}
Blanchet C., Habegger N., Masbaum G., Vogel P., Topological quantum field
 theories derived from the {K}auffman bracket, \href{https://doi.org/10.1016/0040-9383(94)00051-4}{\textit{Topology}} \textbf{34}
 (1995), 883--927.

\bibitem{Braverman:2016wma}
Braverman A., Finkelberg M., Nakajima H., Towards a mathematical definition of
 {C}oulomb branches of 3-dimensional {$\mathcal{N}=4$} gauge theories,~{II},
 \href{https://doi.org/10.4310/ATMP.2018.v22.n5.a1}{\textit{Adv. Theor. Math. Phys.}} \textbf{22} (2018), 1071--1147,
 \href{https://arxiv.org/abs/1601.03586}{arXiv:1601.03586}.

\bibitem{Braverman:2016pwk}
Braverman A., Finkelberg M., Nakajima H., Coulomb branches of {$3d$}
 {$\mathcal{N}=4$} quiver gauge theories and slices in the affine
 {G}rassmannian (with two appendices by Braverman, Finkelberg, Joel Kamnitzer,
 Ryosuke Kodera, Nakajima, Ben Webster and Alex Weeke), \href{https://doi.org/10.4310/ATMP.2019.v23.n1.a3}{\textit{Adv. Theor.
 Math. Phys.}} \textbf{23} (2019), 75--166, \href{https://arxiv.org/abs/1604.03625}{arXiv:1604.03625}.

\bibitem{Brown:2020qui}
Brown J., Dimofte T., Garoufalidis S., Geer N., The {ADO} invariants are a
 $q$-holonomic family, \href{https://arxiv.org/abs/2005.08176}{arXiv:2005.08176}.

\bibitem{Chae:2020ldd}
Chae J., Knot complement, {ADO} invariants and their deformations for torus
 knots, \href{https://doi.org/10.3842/SIGMA.2020.134}{\textit{SIGMA}} \textbf{16} (2020), 134, 16~pages, \href{https://arxiv.org/abs/2007.13277}{arXiv:2007.13277}.

\bibitem{chen2009relation}
Chen Q., Kuppum S., Srinivasan P., On the relation between the {WRT} invariant
 and the {H}ennings invariant, \href{https://doi.org/10.1017/S030500410800193X}{\textit{Math. Proc. Cambridge Philos. Soc.}}
 \textbf{146} (2009), 151--163, \href{https://arxiv.org/abs/0709.2318}{arXiv:0709.2318}.

\bibitem{Cheng:2018vpl}
Cheng M.C., Chun S., Ferrari F., Gukov S., Harrison S.M., 3d modularity,
 \href{https://doi.org/10.1007/jhep10(2019)010}{\textit{J.~High Energy Phys.}} \textbf{2019} (2019), no.~10, 010, 93~pages,
 \href{https://arxiv.org/abs/1809.10148}{arXiv:1809.10148}.

\bibitem{Chun:2019mal}
Chun S., Gukov S., Park S., Sopenko N., 3d-3d correspondence for mapping tori,
 \href{https://doi.org/10.1007/jhep09(2020)152}{\textit{J.~High Energy Phys.}} \textbf{2020} (2020), no.~9, 152, 59~pages,
 \href{https://arxiv.org/abs/1911.08456}{arXiv:1911.08456}.

\bibitem{MR3611724}
Chun S., Gukov S., Roggenkamp D., Junctions of surface operators and
 categorification of quantum groups, in Categorification in Geometry,
 Topology, and Physics, \textit{Contemp. Math.}, Vol.~684, \href{https://doi.org/10.1090/conm/684}{Amer. Math. Soc.},
 Providence, RI, 2017, 87--146, \href{https://arxiv.org/abs/1507.06318}{arXiv:1507.06318}.

\bibitem{Chung:2018rea}
Chung H.-J., {BPS} invariants for {S}eifert manifolds, \href{https://doi.org/10.1007/jhep03(2020)113}{\textit{J.~High Energy
 Phys.}} \textbf{2020} (2020), no.~3, 113, 66~pages, \href{https://arxiv.org/abs/1811.08863}{arXiv:1811.08863}.

\bibitem{Chung:2020efy}
Chung H.-J., Resurgent analysis for some 3-manifold invariants, \href{https://doi.org/10.1007/jhep05(2021)106}{\textit{J.~High
 Energy Phys.}} \textbf{2021} (2021), no.~5, 106, 39~pages,
 \href{https://arxiv.org/abs/2008.02786}{arXiv:2008.02786}.

\bibitem{costantino2014quantum}
Costantino F., Geer N., Patureau-Mirand B., Quantum invariants of 3-manifolds
 via link surgery presentations and non-semi-simple categories,
 \href{https://doi.org/10.1112/jtopol/jtu006}{\textit{J.~Topol.}} \textbf{7} (2014), 1005--1053, \href{https://arxiv.org/abs/1202.3553}{arXiv:1202.3553}.

\bibitem{costantino2015relations}
Costantino F., Geer N., Patureau-Mirand B., Relations between
 {W}itten--{R}eshetikhin--{T}uraev and nonsemisimple {$\mathfrak{sl}(2)$}
 3-manifold invariants, \href{https://doi.org/10.2140/agt.2015.15.1363}{\textit{Algebr. Geom. Topol.}} \textbf{15} (2015),
 1363--1386, \href{https://arxiv.org/abs/1310.2735}{arXiv:1310.2735}.

\bibitem{Costello:2018swh}
Costello K., Creutzig T., Gaiotto D., Higgs and {C}oulomb branches from vertex
 operator algebras, \href{https://doi.org/10.1007/jhep03(2019)066}{\textit{J.~High Energy Phys.}} \textbf{2019} (2019), no.~3,
 066, 48~pages, \href{https://arxiv.org/abs/1811.03958}{arXiv:1811.03958}.

\bibitem{toappear1}
Creutzig T., Dimofte T., Garner N., Geer N., A {QFT} for non-semisimple {TQFT},
 \href{https://arxiv.org/abs/2112.01559}{arXiv:2112.01559}.

\bibitem{de2020nonsemisimple}
De~Renzi M., Geer N., Patureau-Mirand B., Nonsemisimple quantum invariants and
 {TQFT}s from small and unrolled quantum groups, \href{https://doi.org/10.2140/agt.2020.20.3377}{\textit{Algebr. Geom. Topol.}}
 \textbf{20} (2020), 3377--3422.

\bibitem{Dedushenko:2017tdw}
Dedushenko M., Gukov S., Putrov P., Vertex algebras and 4-manifold invariants,
 in Geometry and Physics, {V}ol.~{I}, Oxford University Press, Oxford, 2018,
 249--317, \href{https://arxiv.org/abs/1705.01645}{arXiv:1705.01645}.

\bibitem{deloupmassuyeau}
Deloup F., Massuyeau G., Quadratic functions and complex spin structures on
 three-manifolds, \href{https://doi.org/10.1016/j.top.2004.11.003}{\textit{Topology}} \textbf{44} (2005), 509--555,
 \href{https://arxiv.org/abs/math.GT/0207188}{arXiv:math.GT/0207188}.

\bibitem{deloup2007reciprocity}
Deloup F., Turaev V., On reciprocity, \href{https://doi.org/10.1016/j.jpaa.2005.12.008}{\textit{J.~Pure Appl. Algebra}}
 \textbf{208} (2007), 153--158, \href{https://arxiv.org/abs/math.AC/0512050}{arXiv:math.AC/0512050}.

\bibitem{Dimofte:2009yn}
Dimofte T., Gukov S., Lenells J., Zagier D., Exact results for perturbative
 {C}hern--{S}imons theory with complex gauge group, \href{https://doi.org/10.4310/CNTP.2009.v3.n2.a4}{\textit{Commun. Number
 Theory Phys.}} \textbf{3} (2009), 363--443, \href{https://arxiv.org/abs/0903.2472}{arXiv:0903.2472}.

\bibitem{Ekholm:2020lqy}
Ekholm T., Gruen A., Gukov S., Kucharski P., Park S., Su{\l}kowski P.,
 {$\widehat{Z}$} at large {$N$}: from curve counts to quantum modularity,
 \href{https://doi.org/10.1007/s00220-022-04469-9}{\textit{Comm. Math. Phys.}} \textbf{396} (2022), 143--186,
 \href{https://arxiv.org/abs/2005.13349}{arXiv:2005.13349}.

\bibitem{toappear2}
Feigin B., Gukov S., Reshetikhin N., Quantum groups, log-VOAs, and integrable logarithmic QFTs, unpublished.

\bibitem{Ferrari:2020avq}
Ferrari F., Putrov P., Supergroups, $q$-series and 3-manifolds,
 \href{https://arxiv.org/abs/2009.14196}{arXiv:2009.14196}.

\bibitem{fuji2021witten}
Fuji H., Iwaki K., Murakami H., Terashima Y., Witten--{R}eshetikhin--{T}uraev
 function for a knot in {S}eifert manifolds, \href{https://doi.org/10.1007/s00220-021-03953-y}{\textit{Comm. Math. Phys.}}
 \textbf{386} (2021), 225--251, \href{https://arxiv.org/abs/2007.15872}{arXiv:2007.15872}.

\bibitem{Gaiotto:2014kfa}
Gaiotto D., Kapustin A., Seiberg N., Willett B., Generalized global symmetries,
 \href{https://doi.org/10.1007/JHEP02(2015)172}{\textit{J.~High Energy Phys.}} \textbf{2015} (2015), no.~2, 172, 62~pages,
 \href{https://arxiv.org/abs/1412.5148}{arXiv:1412.5148}.

\bibitem{Gukov:2003na}
Gukov S., Three-dimensional quantum gravity, {C}hern--{S}imons theory, and the
 {A}-polynomial, \href{https://doi.org/10.1007/s00220-005-1312-y}{\textit{Comm. Math. Phys.}} \textbf{255} (2005), 577--627,
 \href{https://arxiv.org/abs/hep-th/0306165}{arXiv:hep-th/0306165}.

\bibitem{Gukov:2020lqm}
Gukov S., Hsin P.-S., Nakajima H., Park S., Pei D., Sopenko N.,
 Rozansky--{W}itten geometry of {C}oulomb branches and logarithmic knot
 invariants, \href{https://doi.org/10.1016/j.geomphys.2021.104311}{\textit{J.~Geom. Phys.}} \textbf{168} (2021), 104311, 22~pages,
 \href{https://arxiv.org/abs/2005.05347}{arXiv:2005.05347}.

\bibitem{Gukov:2019mnk}
Gukov S., Manolescu C., A two-variable series for knot complements,
 \href{https://doi.org/10.4171/qt/145}{\textit{Quantum Topol.}} \textbf{12} (2021), 1--109, \href{https://arxiv.org/abs/1904.06057}{arXiv:1904.06057}.

\bibitem{Gukov:2016njj}
Gukov S., Marino M., Putrov P., Resurgence in complex {C}hern--{S}imons theory,
 \href{https://arxiv.org/abs/1605.07615}{arXiv:1605.07615}.

\bibitem{Gukov:2020frk}
Gukov S., Park S., Putrov P., Cobordism invariants from {BPS} {$q$}-series,
 \href{https://doi.org/10.1007/s00023-021-01089-2}{\textit{Ann. Henri Poincar\'e}} \textbf{22} (2021), 4173--4203,
 \href{https://arxiv.org/abs/2009.11874}{arXiv:2009.11874}.

\bibitem{Gukov:2017kmk}
Gukov S., Pei D., Putrov P., Vafa C., B{PS} spectra and 3-manifold invariants,
 \href{https://doi.org/10.1142/S0218216520400039}{\textit{J.~Knot Theory Ramifications}} \textbf{29} (2020), 2040003, 85~pages,
 \href{https://arxiv.org/abs/1701.06567}{arXiv:1701.06567}.

\bibitem{Gukov:2016gkn}
Gukov S., Putrov P., Vafa C., Fivebranes and 3-manifold homology,
 \href{https://doi.org/10.1007/JHEP07(2017)071}{\textit{J.~High Energy Phys.}} \textbf{2017} (2017), no.~7, 071, 81~pages,
 \href{https://arxiv.org/abs/1602.05302}{arXiv:1602.05302}.

\bibitem{Gukov:2006jk}
Gukov S., Witten E., Gauge theory, ramification, and the geometric {L}anglands
 program, in Current Developments in Mathematics, \href{https://dx.doi.org/10.4310/CDM.2006.v2006.n1.a2}{Int. Press}, Somerville, MA,
 2008, 35--180, \href{https://arxiv.org/abs/hep-th/0612073}{arXiv:hep-th/0612073}.

\bibitem{MR2672467}
Gukov S., Witten E., Branes and quantization, \href{https://dx.doi.org/10.4310/ATMP.2009.v13.n5.a5}{\textit{Adv. Theor. Math. Phys.}}
 \textbf{13} (2009), 1445--1518, \href{https://arxiv.org/abs/0809.0305}{arXiv:0809.0305}.

\bibitem{MR1065677}
Hitchin N.J., Flat connections and geometric quantization, \href{https://doi.org/10.1007/BF02161419}{\textit{Comm. Math.
 Phys.}} \textbf{131} (1990), 347--380.

\bibitem{jeffrey1992chern}
Jeffrey L.C., Chern--{S}imons--{W}itten invariants of lens spaces and torus
 bundles, and the semiclassical approximation, \href{https://doi.org/10.1007/BF02097243}{\textit{Comm. Math. Phys.}}
 \textbf{147} (1992), 563--604.

\bibitem{MR1671737}
Kapranov M., Rozansky--{W}itten invariants via {A}tiyah classes,
 \href{https://doi.org/10.1023/A:1000664527238}{\textit{Compositio Math.}} \textbf{115} (1999), 71--113,
 \href{https://arxiv.org/abs/alg-geom/9704009}{arXiv:alg-geom/9704009}.

\bibitem{kirbymelvin}
Kirby R., Melvin P., The {$3$}-manifold invariants of {W}itten and
 {R}eshetikhin--{T}uraev for {$\mathfrak{sl}(2,{\mathbb C})$}, \href{https://doi.org/10.1007/BF01232277}{\textit{Invent.
 Math.}} \textbf{105} (1991), 473--545.

\bibitem{kirby1990p}
Kirby R., Taylor L., {${\rm Pin}$} structures on low-dimensional manifolds, in
 Geometry of Low-Dimensional Manifolds, 2 ({D}urham, 1989), \textit{London
 Math. Soc. Lecture Note Ser.}, Vol.~151, Cambridge University Press,
 Cambridge, 1990, 177--242.

\bibitem{MR1026957}
Kirillov A.N., Reshetikhin N.Yu., Representations of the algebra {${U}_q({\rm
 sl}(2)),q$}-orthogonal polynomials and invariants of links, in
 Infinite-Dimensional {L}ie Algebras and Groups ({L}uminy-{M}arseille, 1988),
 \textit{Adv. Ser. Math. Phys.}, Vol.~7, \href{https://doi.org/10.1142/0869}{World Sci. Publ.}, Teaneck, NJ, 1989,
 285--339.

\bibitem{MR1671725}
Kontsevich M., Rozansky--{W}itten invariants via formal geometry,
 \href{https://doi.org/10.1023/A:1000619911308}{\textit{Compositio Math.}} \textbf{115} (1999), 115--127,
 \href{https://arxiv.org/abs/dg-ga/9704009}{arXiv:dg-ga/9704009}.

\bibitem{MR0294568}
Kostant B., Quantization and unitary representations. {I}.~{P}requantization,
 in Lectures in {M}odern {A}nalysis and {A}pplications,~{III}, \textit{Lecture
 Notes in Math.}, Vol.~170, \href{https://doi.org/10.1007/BFb0079068}{Springer}, Berlin, 1970, 87--208.

\bibitem{Kucharski:2020rsp}
Kucharski P., Quivers for 3-manifolds: the correspondence, {BPS} states, and 3d
 {$\mathcal{N} = 2$} theories, \href{https://doi.org/10.1007/jhep09(2020)075}{\textit{J.~High Energy Phys.}} \textbf{2020}
 (2020), no.~9, 075, 26~pages, \href{https://arxiv.org/abs/2005.13394}{arXiv:2005.13394}.

\bibitem{kyle1954branched}
Kyle R.H., Branched covering spaces and the quadratic forms of links,
 \href{https://doi.org/10.2307/1969717}{\textit{Ann. of Math.}} \textbf{59} (1954), 539--548.

\bibitem{MR1669720}
Laszlo Y., Hitchin's and {WZW} connections are the same,
 \href{https://doi.org/10.4310/jdg/1214461110}{\textit{J.~Differential Geom.}} \textbf{49} (1998), 547--576.

\bibitem{lawrence1999modular}
Lawrence R., Zagier D., Modular forms and quantum invariants of
 {$3$}-manifolds, \href{https://doi.org/10.4310/AJM.1999.v3.n1.a5}{\textit{Asian~J.~Math.}} \textbf{3} (1999), 93--107.

\bibitem{Mikhaylov:2015nsa}
Mikhaylov V., Analytic torsion, 3d mirror symmetry and supergroup
 {C}hern--{S}imons theories, \href{https://arxiv.org/abs/1505.03130}{arXiv:1505.03130}.

\bibitem{mori2022witten}
Mori A., Murakami Y., Witten--{R}eshetikhin--{T}uraev invariants, homological
 blocks, and quantum modular forms for unimodular plumbing {H}-graphs,
 \href{https://doi.org/10.3842/SIGMA.2022.034}{\textit{SIGMA}} \textbf{18} (2022), 034, 20~pages, \href{https://arxiv.org/abs/2110.10958}{arXiv:2110.10958}.

\bibitem{MR1197048}
Murakami J., A state model for the multivariable {A}lexander polynomial,
 \href{https://doi.org/10.2140/pjm.1993.157.109}{\textit{Pacific~J.~Math.}} \textbf{157} (1993), 109--135.

\bibitem{Murakami}
Murakami J., Colored {A}lexander invariants and cone-manifolds,
 \textit{Osaka~J.~Math.} \textbf{45} (2008), 541--564.

\bibitem{Nakajima:2017bdt}
Nakajima H., Introduction to a provisional mathematical definition of {C}oulomb
 branches of 3-dimensional {$\mathcal{N}=4$} gauge theories, in Modern
 Geometry: a Celebration of the Work of {S}imon {D}onaldson, \textit{Proc.
 Sympos. Pure Math.}, Vol.~99, Amer. Math. Soc., Providence, RI, 2018,
 193--211, \href{https://arxiv.org/abs/1706.05154}{arXiv:1706.05154}.

\bibitem{ohtsuki}
Ohtsuki T., Quantum invariants. A study of knots, 3-manifolds, and their sets,
 \textit{Series on Knots and Everything}, Vol.~29, \href{https://doi.org/10.1142/4746}{World Sci. Publ. Co., Inc.},
 River Edge, NJ, 2002.

\bibitem{Park:2019xey}
Park S., Higher rank {$\hat{Z}$} and {$F_K$}, \href{https://doi.org/10.3842/SIGMA.2020.044}{\textit{SIGMA}} \textbf{16}
 (2020), 044, 17~pages, \href{https://arxiv.org/abs/1909.13002}{arXiv:1909.13002}.

\bibitem{Park:2020edg}
Park S., Large color {$R$}-matrix for knot complements and strange identities,
 \href{https://doi.org/10.1142/S0218216520500972}{\textit{J.~Knot Theory Ramifications}} \textbf{29} (2020), 2050097, 32~pages,
 \href{https://arxiv.org/abs/2004.02087}{arXiv:2004.02087}.

\bibitem{Park2021}
Park S., Inverted state sums, inverted {H}abiro series, and indefinite theta
 functions, \href{https://arxiv.org/abs/2106.03942}{arXiv:2106.03942}.


\bibitem{Qiu:2020mji}
Qiu J., Rozansky--{W}itten theory, localised then tilted, \href{https://doi.org/10.1007/s00220-021-04267-9}{\textit{Comm. Math.
 Phys.}} \textbf{389} (2022), 813--874, \href{https://arxiv.org/abs/2011.05375}{arXiv:2011.05375}.

\bibitem{reshetikhin1991invariants}
Reshetikhin N., Turaev V.G., Invariants of {$3$}-manifolds via link polynomials
 and quantum groups, \href{https://doi.org/10.1007/BF01239527}{\textit{Invent. Math.}} \textbf{103} (1991), 547--597.

\bibitem{MR1461545}
Souriau J.M., Structure of dynamical systems, \textit{Progr. Math.}, Vol.~149,
 Birkh\"auser Boston, Inc., Boston, MA, 1997.

\bibitem{MR1048605}
Tsuchiya A., Ueno K., Yamada Y., Conformal field theory on universal family of
 stable curves with gauge symmetries, in Integrable Systems in Quantum Field
 Theory and statistical Mechanics, \textit{Adv. Stud. Pure Math.}, Vol.~19,
 \href{https://doi.org/10.2969/aspm/01910459}{Academic Press}, Boston, MA, 1989, 459--566.

\bibitem{turaev1990euler}
Turaev V., Euler structures, nonsingular vector fields, and {R}eidemeister-type
 torsions, \href{https://doi.org/10.1070/IM1990v034n03ABEH000676}{\textit{Math. USSR Izv.}} \textbf{34} (1990), 627--662.

\bibitem{turaev1997torsion}
Turaev V., Torsion invariants of {${\rm Spin}^c$}-structures on
 {$3$}-manifolds, \href{https://doi.org/10.4310/MRL.1997.v4.n5.a6}{\textit{Math. Res. Lett.}} \textbf{4} (1997), 679--695.

\bibitem{turaev2002torsions}
Turaev V., Torsions of 3-manifolds, in Invariants of Knots and 3-Manifolds
 ({K}yoto, 2001), \textit{Geom. Topol. Monogr.}, Vol.~4, \href{https://doi.org/10.2140/gtm.2002.4.295}{Geom. Topol. Publ.},
 Coventry, 2002, 295--302, \href{https://arxiv.org/abs/math.GT/0211084}{arXiv:math.GT/0211084}.

\bibitem{Vafa:1994tf}
Vafa C., Witten E., A strong coupling test of {$S$}-duality, \href{https://doi.org/10.1016/0550-3213(94)90097-3}{\textit{Nuclear
 Phys.~B}} \textbf{431} (1994), 3--77, \href{https://arxiv.org/abs/hep-th/9408074}{arXiv:hep-th/9408074}.

\bibitem{willets}
Willetts S., A unification of the {ADO} and colored {J}ones polynomials of a
 knot, \href{https://doi.org/10.4171/qt/161}{\textit{Quantum Topol.}} \textbf{13} (2022), 137--181,
 \href{https://arxiv.org/abs/2003.09854}{arXiv:2003.09854}.

\bibitem{Witten:1988hf}
Witten E., Quantum field theory and the {J}ones polynomial, \href{https://doi.org/10.1007/BF01217730}{\textit{Comm. Math.
 Phys.}} \textbf{121} (1989), 351--399.

\bibitem{Witten:2011zz}
Witten E., Fivebranes and knots, \href{https://doi.org/10.4171/QT/26}{\textit{Quantum Topol.}} \textbf{3} (2012),
 1--137, \href{https://arxiv.org/abs/1101.3216}{arXiv:1101.3216}.

\bibitem{zagier2010quantum}
Zagier D., Quantum modular forms, in Quanta of Maths, \textit{Clay Math.
 Proc.}, Vol.~11, Amer. Math. Soc., Providence, RI, 2010, 659--675.

\end{thebibliography}
\end{document}